\documentclass[a4paper]{amsart}

\usepackage[all,cmtip]{xy}
\usepackage{amssymb}
\usepackage{mathrsfs}
\usepackage{mathpazo}
\usepackage{setspace}
\usepackage{changepage}
\usepackage{euler}
\usepackage{leftidx}
\usepackage{xcolor}
\usepackage{hyperref}
\hypersetup{colorlinks}
\hypersetup{
    unicode=false,          
    pdftoolbar=true,        
    pdfmenubar=true,        
    pdffitwindow=false,     
    pdfstartview={FitH},    
    pdftitle={My title},    
    pdfauthor={Author},     
    pdfsubject={Subject},   
    pdfcreator={Creator},   
    pdfproducer={Producer}, 
    pdfkeywords={keyword1} {key2} {key3}, 
    pdfnewwindow=true,      
    colorlinks=true,       
    linkcolor=red,          
    citecolor=magenta,        
    filecolor=violet,      
    urlcolor=blue      
}
\usepackage[OT2,T1]{fontenc}
\DeclareSymbolFont{cyrletters}{OT2}{wncyr}{m}{n}
\DeclareMathSymbol{\Sha}{\mathalpha}{cyrletters}{"58}
\usepackage{comment}

\newtheorem{thm}{Theorem}[section]

\newtheorem{lem}[thm]{Lemma}
\newtheorem{prop}[thm]{Proposition}

\theoremstyle{definition}
\newtheorem{defn}[thm]{Definition}
\newtheorem{rmk}[thm]{Remark}
\newtheorem{notation}[thm]{Notation}

\newtheorem{ex}[thm]{Example}
\newtheorem*{comment*}{Comment}

\makeatletter
\def\blfootnote{\xdef\@thefnmark{}\@footnotetext}
\makeatother

\makeatletter
\newcommand{\holim@}[2]{%
  \vtop{\m@th\ialign{##\cr
    \hfil$#1\operator@font holim$\hfil\cr
    \noalign{\nointerlineskip\kern1.5\ex@}#2\cr
    \noalign{\nointerlineskip\kern-\ex@}\cr}}%
}
\newcommand{\varinjholim}{%
  \mathop{\mathpalette\holim@{\rightarrowfill@\textstyle}}\nmlimits@
}
\newcommand{\varprojholim}{%
  \mathop{\mathpalette\holim@{\leftarrowfill@\textstyle}}\nmlimits@
}

\newcommand{\psdlim@}[2]{%
  \vtop{\m@th\ialign{##\cr
    \hfil$#1\operator@font Psdlim$\hfil\cr
    \noalign{\nointerlineskip\kern1.5\ex@}#2\cr
    \noalign{\nointerlineskip\kern-\ex@}\cr}}%
}
\newcommand{\varinjpsdlim}{%
  \mathop{\mathpalette\psdlim@{\rightarrowfill@\textstyle}}\nmlimits@
}
\newcommand{\varprojpsdlim}{%
  \mathop{\mathpalette\psdlim@{\leftarrowfill@\textstyle}}\nmlimits@
}

\DeclareMathOperator{\im}{im}

\DeclareMathOperator{\Spec}{Spec}

\DeclareMathOperator{\cone}{cone}

\makeatother

\DeclareMathAlphabet{\mathpzc}{OT1}{pzc}{m}{it}
\numberwithin{equation}{section}

\begin{document}
\title{Arithmetic BF theory and the Cassels-Tate pairing}

\author{Jeehoon Park}
\address{Jeehoon Park: QSMS, Seoul National University, 1 Gwanak-ro, Gwanak-gu, Seoul, South Korea 08826 }
\email{jpark.math@gmail.com}

\author{Junyeong Park}
\address{Junyeong Park: Department of Mathematics Education, Chonnam National University, 77, Yongbong-ro, Buk-gu, Gwangju 61186, Korea}
\email{junyeongp@gmail.com}

\begin{abstract} 
We give a systematic treatment of the arithmetic BF theory, introduced by Carlson and Kim. We observe that the Cassels-Tate pairing can be naturally interpreted as an arithmetic BF functional.
\end{abstract}

\maketitle

\date{}

\blfootnote{2020 \textit{Mathematics Subject Classification.} Primary 11R04, 11R23, 11R34; Secondary 81T45}


\blfootnote{Key words and phrases: arithmetic gauge theory, arithmetic BF theory, Cassels-Tate pairing}

\tableofcontents

\section{Introduction}

Based on arithmetic topology (cf. \cite{Mor}), the notion of arithmetic gauge theory was introduced by Minhyong Kim \cite{kim1} as a framework for studying the deep analogies between the topology of 3-manifolds and the arithmetic of number fields. 
The first successful realization of this program was the development of arithmetic Chern-Simons theory, which has been extensively studied in recent years (cf. \cite{kim2}, \cite{CKKPY}, \cite{CKKPPY}, \cite{LP}). While Chern-Simons theory provides a rich source of arithmetic invariants, such as arithmetic linking numbers and $p$-adic $L$-functions, there remains a need for a more flexible gauge-theoretic framework capable of capturing the duality structures inherent in Selmer groups and Galois cohomology. 
The arithmetic BF theory for number fields and abelian varieties was introduced in \cite{CaKi22} to address this need and to further the philosophy of arithmetic gauge theory, which suggests that the path integral of a physical theory should be intimately related to the $L$-function or the structural invariants of the relevant arithmetic objects.

In the topological setting, BF theory is a topological quantum field theory that computes the linking of cycles (which do not require that the space-time manifold is orientable); in the arithmetic setting, it serves as a bridge between the cohomological pairings of number theory and the functional integrals of physics. The primary motivation for developing arithmetic BF theory lies in its potential to provide a "computational" or "physical" interpretation of arithmetic duality. While the initial investigations in \cite{CaKi22} provided a hint toward this philosophy, a more striking evidence appeared in the context of cyclotomic fields, where the arithmetic BF theory led to the proof of an arithmetic path integral formula for the inverse $p$-adic absolute values of Kubota-Leopoldt $p$-adic $L$-functions at roots of unity \cite{CCKKPY}. 
Subsequent work in \cite{PP25} generalized this path integral formula to elliptic curves with ordinary good reduction at $p$. These results suggest that the "action functional" of a BF-type theory is the natural home for the special values of $L$-functions, providing a topological explanation for their existence.

In the intervening period, a more systematic treatment of arithmetic Chern-Simons theory was provided in \cite{HKM}\footnote{The authors of \cite{HKM} used the terminology "arithmetic Dijkgraff-Witten theory" to indicate that the gauge group is a finite abstract group, instead of arithmetic Chern-Simons theory, which is a more broader concept (including the case that the gauge group is a $p$-adic Lie group).}. Prior to this work, foundational papers such as \cite{CKKPY} and \cite{kim2} established the initial formulations for the space of fields, the action functional, and the core components of the decomposition formula. These earlier investigations also introduced the concepts of prequantum line bundles and the partition function within the context of the quantum theory. 
However, the full quantum-theoretic structure was not yet comprehensively addressed; specifically, the rigorous construction of the quantum Hilbert space and a proper treatment of the partition function as an element within that space remained incomplete until the systematic approach in \cite{HKM} was introduced.

The goal of this article is to provide a similarly systematic treatment of abelian arithmetic BF theory for Selmer modules, adopting the style and rigor of the work in \cite{HKM}. To provide a "systematic treatment" in this context involves reconstructing the entire architecture of a physical gauge theory using purely arithmetic components. 

This process is built upon four primary pillars. 
The first pillar is the formal construction of the "space of fields," denoted as $\mathscr{F}(X_S)$, which is defined via pullbacks of cohomology groups to account for both global Galois modules and local boundary behavior at primes. 
The second pillar is the "definition of the functional via duality," where the action functional is derived from fundamental duality theorems such as Artin-Verdier or local Tate duality, ensuring the cup product pairing is globally consistent and gauge-invariant. 
The third pillar is the "categorical formulation of the partition function," which involves constructing a Hermitian line bundle $L_S$ over the space of boundary fields and defining the partition function $Z_{X_S}$ as a section; this represents the arithmetic equivalent of a physical quantum state in a Hilbert space. 
Finally, the fourth pillar is the "proof of functorial properties," which verifies that the theory obeys TQFT axioms like the "gluing formula" and "decomposition formula," allowing global invariants like the Cassels-Tate pairing to be recovered from local data.

By formalizing the space of fields and the boundary conditions in a manner consistent with the modern theory of Selmer groups, we aim to clarify the relationship between the categorical construction of the BF functional and the classical results of arithmetic duality. 
Our main observation is that the Cassels-Tate pairing, as defined for finite Galois modules in the sense of \cite{MS21} and \cite{MS24}, can be naturally interpreted as the arithmetic BF functional under certain hypotheses. 
Specifically, we show that the partition function of the arithmetic BF theory on a global field is governed by the size of the relevant Selmer groups, thereby establishing a formal link between the "quantum" state of the arithmetic manifold and the classical arithmetic invariants. This interpretation provides a unified perspective on the Cassels-Tate pairing, viewing it not merely as a pairing in cohomology, but as a physical functional defined on the space of arithmetic fields. 

Finally, we highlight the recent work in \cite{CKKPY24}, which investigates the computation of entanglement entropy within the specific framework of abelian arithmetic Chern-Simons theory. Extending these entropic investigations to the abelian arithmetic BF theory developed in this article represents a compelling direction for future research, potentially offering new insights into the information-theoretic aspects of arithmetic duality.

\textbf{Acknowledgement}:
This article is presented as a contribution to the 61st birthday conference proceedings in honor of Professor Minhyong Kim, convened from 25 to 29 Nov 2024 at ICMS, Bayes Centre, Edinburgh. We are deeply honored to participate in this tribute to Professor Kim, who fundamentally shaped the area of arithmetic quantum field theory. The authors express their heartfelt gratitude to Professor Kim for his guidance and generosity in sharing his ideas during the preparation of the paper.
We also appreciate the organizing committee for creating an exceptional workshop for academic exchange and celebration.

Jeehoon Park is very grateful to Magnus Carlson, Hee-Joong Chung, Dohyeong Kim, Minhyong Kim, and Hwajong Yoo for numerous useful discussions on arithmetic quantum field theory. He also thanks Adam Morgan for a valuable comment on the definition of the Cassels-Tate pairing.
Jeehoon Park was supported by the National Research Foundation of Korea (NRF-2021R1A2C1006696) and the National Research Foundation of Korea grant funded by the Korea government (MSIT) (No.2020R1A5A1016126). 
Junyeong Park was supported by Basic Science Research Program through the
National Research Foundation of Korea(NRF) funded by the Ministry of Education (RS-2024-00449679) and the National Research Foundation of Korea(NRF) grant funded by the Korea government (MSIT) (RS-2024-00415601).

\begin{notation}\label{notation-overall} Throughout this article, we use the following notations without further mention.
\begin{itemize}
    \item Given a category $\mathscr{C}$ and objects $s,t$ in $\mathscr{C}$, denote by $\mathscr{C}(s,t)$ the set of arrows in $\mathscr{C}$ from $s$ to $t$.
    \item Given a small category $\mathscr{C}$, denote by $\pi_0(\mathscr{C})$ its set of connected components. If $\mathscr{C}$ is a groupoid, then $\pi_0(\mathscr{C})$ becomes the set of isomorphism classes in $\mathscr{C}$.
    \item Given a map $\pi:E\rightarrow B$ in a category $\mathscr{C}$, for each subobject $U\hookrightarrow B$, denote
    \begin{align*}
        \Gamma(U,E):=\{\sigma\in\mathscr{C}(U,E)\ |\ \textrm{$\pi\circ\sigma$ is the given monomorphism $U\hookrightarrow B$}\}
    \end{align*}
    If $\mathscr{C}$ admits pullbacks, then $\Gamma(U,E)\cong\Gamma(U,E|_U)$ where $\pi|_U:E|_U\rightarrow U$ is the pullback of $\pi$ along the monomorphism $U\hookrightarrow B$.
    \item Let $\mathcal{G}$ be a commutative group scheme over a base scheme. Given an affine scheme $\Spec R$ over the same base scheme, we denote
    \begin{align*}
        H^i(R,\mathcal{G}):=H^i(\Spec R,\mathcal{G}_R),\quad i\in\mathbb{Z}
    \end{align*}
    the sheaf cohomology group for any Grothendieck topology we use.
    \item Given a field $\Bbbk$, a choice of its separable closure is denoted by $\Bbbk^\mathrm{sep}/\Bbbk$. Then denote $G_\Bbbk:=\mathrm{Gal}(\Bbbk^\mathrm{sep}/\Bbbk)$ its absolute Galois group.
    \item Given a local field $\Bbbk$, the maximal unramified extension in $\Bbbk^\mathrm{sep}/\Bbbk$ is denoted by $\Bbbk^\mathrm{nr}/\Bbbk$.
    \item Given a finite discrete $G_\Bbbk$-module $N$, denote $N^\vee:=\mathrm{Ab}(N,\Bbbk^{\mathrm{sep},\times})$.
    \item  Given a topological group $G$ and a topological $G$-module $A$, denote by $C^\bullet(G,A)$ the inhomogeneous continuous cochain complex. If $G$ is profinite and $A$ is discrete, then $C^\bullet(G,A)$ computes the group cohomologies $H^i(G,A)$. 
    See \cite[\href{https://stacks.math.columbia.edu/tag/0DVG}{Tag 0DVG}]{Stacks} for example.
    \item Given a topological group $G$ and topological $G$-modules $A,B$, we have a bilinear map
    \begin{align*}
        \xymatrix{\cup:C^p(G,A)\times C^q(G,B) \ar[r] & C^{p+q}(G,A\otimes_\mathbb{Z}B)}
    \end{align*}
    defined for $\alpha\in C^p(G,A)$ and $\beta\in C^q(G,B)$ by
    \begin{align*}
        (\alpha\cup\beta)(g_1,\cdots,g_{p+q}):=\alpha(g_1,\cdots,g_p)\otimes g_1\cdots g_p\beta(g_{p+1},\cdots,g_{p+q}).
    \end{align*}
    One can directly verify that this is a derivation on the cochain, i.e., satisfies the graded Leibniz rule:
    \begin{align*}
        d(\alpha\cup\beta)=d\alpha\cup\beta+(-1)^p\alpha\cup d\beta.
    \end{align*}
    Alternatively, one can use the inhomogeneous model (cf. \cite[I.4]{NSW08}) and the comparison isomorphism (cf. \cite[p.14]{NSW08}) to get the definition and the desired property.
    \item Given a topological group $G$ and a map $A\otimes_\mathbb{Z}B\rightarrow M$ of topological $G$-modules, the composite
    \begin{align*}
        \xymatrix{\cup:C^p(G,A)\times C^q(G,B) \ar[r]^-\cup & C^{p+q}(G,A\otimes_\mathbb{Z}B) \ar[r] & C^{p+q}(G,M)}
    \end{align*}
    will be called the \emph{cup product}. By the above observation, this is a derivation on the cochain.
\end{itemize}
\end{notation}

\begin{notation}\label{notation-Globalfield} Let $K$ be a global field.
\begin{itemize}
    \item If $K$ is of characteristic $0$, then we assume that $K$ is totally imaginary.
    \item Let $X$ be a scheme defined as follows, depending on the characteristic of $K$.
    \begin{itemize}
        \item If $K$ is of characteristic $0$, then denote by $\mathcal{O}_K\subseteq K$ its ring of integers and by $X:=\Spec\mathcal{O}_K$.
        \item If $K$ is of characteristic $p>0$, then $X$ will be a smooth proper curve over a finite field whose function field is $K$.
    \end{itemize}
    \item Given an open subset $U\subseteq X$, denote by $U^\mathrm{cl}\subseteq U$ its subset of closed points.
    \item Let $U\subseteq X$ be an open subset. For $u\in U^\mathrm{cl}$, we use the following notation.
    \begin{itemize}
        \item Denote $K_u$ the corresponding completion and $\mathcal{O}_u\subseteq K_u$ its valuation ring so that $\mathcal{O}_u\cong\widehat{\mathcal{O}}_{U,u}$.
        \item Denote by $\mathcal{O}_u^\mathrm{sep}\subseteq K_u^\mathrm{sep}$ and $\mathcal{O}_u^\mathrm{nr}\subseteq K_u^\mathrm{nr}$ the valuation rings.
        \item Denote by $G_{K,u}:=G_{K_u}$ and $I_{K,u}\subseteq G_{K,u}$ its inertia subgroup so that
        \begin{align*}
            G_{K,u}^\mathrm{nr}:=G_{K,u}/I_{K,u}\cong G_{\kappa(u)}\cong\widehat{\mathbb{Z}}\rlap{\ }
        \end{align*}
        where $\kappa(u)$ is the residue field at $u$.
    \end{itemize}
    Note that if we choose an embedding $K^\mathrm{sep}\hookrightarrow K_u^\mathrm{sep}$, then we get an embedding $G_{K,u}\hookrightarrow G_K$ whose image fixes $u\in U^\mathrm{cl}$.
    \item Let $U\subseteq X$ be an open subset. We say that a commutative group scheme over $U$ is \emph{nice} if it is finite, flat, and of order relatively prime to the characteristic of $K$.
\end{itemize}
\end{notation}

\section{Preliminaries on cup products}

\begin{rmk} Let $U\subseteq X$ be an open subset. By \cite[Corollary 11.31]{Mil17}, any nice commutative group scheme $\mathcal{G}$ over $U$ is generically \'etale, or equivalently, $\mathcal{G}_K$ is \'etale over $K$.
\end{rmk}

We define the compactly supported cohomology groups $H_{\mathrm{fppf},c}^r(U,-)$ on an open subset $U\subseteq X$ to be the cohomology of
\begin{align}\label{defn-Hc}
    \mathbb{R}\Gamma_\mathrm{fppf,c}(U,-):=\cone\left(\xymatrix{\mathbb{R}\Gamma_\mathrm{fppf}\left(U,-\right) \ar[r] & \displaystyle\bigoplus_{x\in X\setminus U}\mathbb{R}\Gamma_\mathrm{fppf}(K_x,-)}\right)[-1]
\end{align}
By \cite[Remark III.0.6 (b)]{Mil06} and \cite{DH}, we may use results in \cite[Chapter III]{Mil06}. We let
\begin{align}\label{defn-Hc-im}
    D_\mathrm{fppf}^r(U,-):=\im\left(\xymatrix{H_\mathrm{fppf,c}^r(U,-) \ar[r] & H_\mathrm{fppf}^r(U,-)}\right)\rlap{\ .}
\end{align}

\begin{rmk} If $U=X$, then $D_\mathrm{fppf}^r(X,-)=H_\mathrm{fppf}^r(X,-)$ because the induced map
\begin{align*}
    \xymatrix{H_\mathrm{fppf,c}^r(X,-) \ar[r] & H_\mathrm{fppf}^r(X,-)}
\end{align*}
becomes an isomorphism.
\end{rmk}

On the other hand, consider the canonical isomorphism (see \cite[III.3 and III.8]{Mil06} or \cite[section 4 and section 5]{DH})
\begin{align}\label{Global-H3-inv}
    \xymatrix{\displaystyle\int_U:H_\mathrm{fppf,c}^3(U,\mathbb{G}_m) \ar[r]^-\sim & \mathbb{Q}/\mathbb{Z}\rlap{\ .}}
\end{align}
By \cite[Corollary III.3.2 and Theorem III.8.2]{Mil06} or \cite[Theorem 1.1]{DH}, if $\mathcal{M}$ is a nice commutative group scheme over $U$ with the Cartier dual $\mathcal{M}^\vee$, then
\begin{align}\label{AVduality}
    \xymatrix{H_\mathrm{fppf}^r(U,\mathcal{M}^\vee)\times H_\mathrm{fppf,c}^{3-r}(U,\mathcal{M}) \ar[r] & \mathbb{Q}/\mathbb{Z} & (\alpha,\beta) \ar@{|->}[r] & \displaystyle\int_U\alpha\cup\beta}
\end{align}
is a perfect pairing for $r\in\{0,1,2,3\}$.

\begin{lem} The pairing \eqref{AVduality} induces a perfect pairing
\begin{align*}
    \xymatrix{D_\mathrm{fppf}^r(U,\mathcal{M}^\vee)\times D_\mathrm{fppf}^{3-r}(U,\mathcal{M}) \ar[r] & \mathbb{Q}/\mathbb{Z}\rlap{\ .}}
\end{align*}
such that the following diagram is commutative:
\begin{align*}
\xymatrixcolsep{1pc}\xymatrix{
H_\mathrm{fppf}^r(U,\mathcal{M}^\vee)  & \times & H_\mathrm{fppf,c}^{3-r}(U,\mathcal{M}) \ar@{->>}[d] \ar[rr] & & \mathbb{Q}/\mathbb{Z} \ar@{=}[d] \\
D_\mathrm{fppf}^r(U,\mathcal{M}^\vee) \ar@{^(->}[u] & \times & D_\mathrm{fppf}^{3-r}(U,\mathcal{M}) \ar[rr] & & \mathbb{Q}/\mathbb{Z}\rlap{\ .}
}
\end{align*}
\end{lem}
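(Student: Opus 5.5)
The plan is to define the pairing on images by lifting along the ``forget supports'' maps and to use the projection formula to show the result is independent of the lift. Write
\begin{align*}
a:H_\mathrm{fppf,c}^{3-r}(U,\mathcal{M})\rightarrow H_\mathrm{fppf}^{3-r}(U,\mathcal{M}),\qquad a^\vee:H_\mathrm{fppf,c}^{r}(U,\mathcal{M}^\vee)\rightarrow H_\mathrm{fppf}^{r}(U,\mathcal{M}^\vee)
\end{align*}
for these maps, so that $D^{3-r}_\mathrm{fppf}(U,\mathcal{M})=\im a$ and $D^r_\mathrm{fppf}(U,\mathcal{M}^\vee)=\im a^\vee$. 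The key input is the compatibility of cup products with the cone construction \eqref{defn-Hc}. There are cup products
\begin{align*}
H_\mathrm{fppf}^r(U,\mathcal{M}^\vee)\times H_\mathrm{fppf,c}^{3-r}(U,\mathcal{M})\rightarrow H_\mathrm{fppf,c}^3(U,\mathbb{G}_m),\qquad H_\mathrm{fppf,c}^r(U,\mathcal{M}^\vee)\times H_\mathrm{fppf}^{3-r}(U,\mathcal{M})\rightarrow H_\mathrm{fppf,c}^3(U,\mathbb{G}_m),
\end{align*}
and for $x\in H_\mathrm{fppf,c}^r(U,\mathcal{M}^\vee)$ and $y\in H_\mathrm{fppf,c}^{3-r}(U,\mathcal{M})$ they satisfy
\begin{align*}
a^\vee(x)\cup y=x\cup a(y)\quad\text{in } H_\mathrm{fppf,c}^3(U,\mathbb{G}_m).
\end{align*}
Both sides are the image of the compactly supported cup product $x\cup y$. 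I would take this from the construction of cup products on compactly supported cohomology in \cite[III.0 and Proposition III.0.4]{Mil06} (or \cite{DH}), using the Leibniz rule on cochains from Notation~\ref{notation-overall} to check it at the level of cones. A possible sign $(-1)^{r(3-r)}$ only needs to be tracked consistently; it affects neither well-definedness nor perfectness.

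Given this, for $d=a^\vee(x)\in D^r$ and $e=a(y)\in D^{3-r}$, set
\begin{align*}
\langle d,e\rangle:=\int_U d\cup y=\int_U x\cup a(y).
\end{align*}
The first expression does not depend on the choice of $x$, and the second does not depend on the choice of $y$. Since the two expressions agree, the value depends only on $(d,e)$, and it is clearly bilinear. Commutativity of the diagram in the statement is then exactly the first expression, because $\langle d,a(y)\rangle=\int_U d\cup y$.

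For perfectness I would argue nondegeneracy on each side and then use finiteness.
\begin{itemize}
\item \emph{Left kernel.} If $d\in D^r$ pairs trivially with all of $D^{3-r}$, then $\int_U d\cup y=0$ for every $y\in H_\mathrm{fppf,c}^{3-r}(U,\mathcal{M})$. Perfectness of \eqref{AVduality} then gives $d=0$.
\item \emph{Right kernel.} Here I use \eqref{AVduality} applied to $\mathcal{M}^\vee$ in place of $\mathcal{M}$, with $r$ replaced by $3-r$. This is legitimate because $\mathcal{M}^\vee$ is again nice and $(\mathcal{M}^\vee)^\vee\cong\mathcal{M}$. It gives a perfect pairing $H^{3-r}_\mathrm{fppf}(U,\mathcal{M})\times H^r_\mathrm{fppf,c}(U,\mathcal{M}^\vee)\rightarrow\mathbb{Q}/\mathbb{Z}$. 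Matching its cup product with the one above up to the graded-commutativity sign, the second expression $\int_U x\cup e$ shows that if $e$ pairs trivially with all of $D^r$, then $e=0$.
\end{itemize}
Since all groups involved are finite (by \cite[III]{Mil06} for nice $\mathcal{M}$), the two injections $D^r\hookrightarrow (D^{3-r})^*$ and $D^{3-r}\hookrightarrow (D^r)^*$ force equal orders. Hence both maps are isomorphisms and the pairing is perfect.

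I expect the main obstacle to be the projection formula $a^\vee(x)\cup y=x\cup a(y)$, together with identifying the two compactly supported cup products with the one used in \eqref{AVduality} for $\mathcal{M}$ and for $\mathcal{M}^\vee$. This requires a concrete model of the cone and of the homotopy that makes the cup product compatible with the restriction maps to the $K_x$ for $x\in X\setminus U$. I would first try to extract this directly from the statements in \cite{Mil06} and \cite{DH}, and only if that fails, write it out on cochains.
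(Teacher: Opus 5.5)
Your proposal is correct. The paper gives no argument of its own for this lemma. It only cites \cite[Theorem 2.16]{Kes17} as the fppf analogue of \cite[Corollary II.3.4]{Mil06}, and what you wrote is essentially the standard proof behind those references.

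The standard argument is usually packaged slightly differently. Your projection formula, together with \eqref{AVduality} for $\mathcal{M}$ and for $\mathcal{M}^\vee$, identifies $H^r_\mathrm{fppf,c}(U,\mathcal{M}^\vee)\rightarrow H^r_\mathrm{fppf}(U,\mathcal{M}^\vee)$, up to sign, with the Pontryagin dual of $H^{3-r}_\mathrm{fppf,c}(U,\mathcal{M})\rightarrow H^{3-r}_\mathrm{fppf}(U,\mathcal{M})$. Then ``the image of the dual map is the dual of the image'' gives $D^r_\mathrm{fppf}(U,\mathcal{M}^\vee)\cong D^{3-r}_\mathrm{fppf}(U,\mathcal{M})^\ast$ directly, using only that $\mathbb{Q}/\mathbb{Z}$ is injective. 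Applying the same reasoning with the roles swapped gives the other isomorphism, so no order count is needed. Your elementwise version, which checks both kernels and then compares orders, is equivalent. It simply relies on the finiteness of the groups, which is part of the duality statements you cite.

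The step you flag as the main obstacle is routine once you have a cochain model of the cone \eqref{defn-Hc} in which restriction to the $K_x$ is multiplicative. Write cone cocycles as $x=(u,v)$ and $y=(u',v')$, so that $\mathrm{res}(u)=dv$ and $\mathrm{res}(u')=dv'$. Then $a^\vee(x)\cup y$ and $x\cup a(y)$ are represented by $(u\cup u',\pm\,dv\cup v')$ and $(u\cup u',v\cup dv')$. These differ by the coboundary of $(0,\pm\,v\cup v')$. This is exactly the compatibility that \cite{Mil06} and \cite{DH} provide.
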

\begin{proof} This is an fppf cohomology analogue of \cite[Corollary II.3.4]{Mil06}. See \cite[Theorem 2.16]{Kes17} for a proof.
\end{proof}

To define the arithmetic BF-functional, we need an exact sequence of nice commutative group schemes over an open subset $Y\subseteq X$ as an input:
\begin{align}\label{BF-input}
    \xymatrix{0 \ar[r] & \mathcal{M}_1 \ar[r] & \mathcal{M} \ar[r] & \mathcal{M}_2 \ar[r] & 0\rlap{\ .}}
\end{align}
\begin{ex} Let $A$ be an abelian variety over $K$. Take the N\'eron model $\mathcal{A}$ over $X$. This is a commutative quasi-projective group scheme over $X$. However, $\mathcal{A}$ is not necessarily proper over $X$. If $Y\subseteq X$ is the open subset where $\mathcal{A}$ admits good reduction, then $\mathcal{A}|_Y$ becomes an abelian scheme over $Y$. For each $n\in\mathbb{Z}$ relatively prime to the characteristic of $K$, the torsion subgroups $\mathcal{A}|_Y[n^r]\subseteq\mathcal{A}|_Y$ are finite and flat over $Y$. Then the exact sequence
\begin{align*}
    \xymatrix{0 \ar[r] & \mathcal{A}|_Y[n] \ar[r] & \mathcal{A}|_Y[n^2] \ar[r]^-n & \mathcal{A}|_Y[n] \ar[r] & 0}
\end{align*}
is an example of \eqref{BF-input}. Moreover, $\mathcal{A}|_Y[n]$ is finite \'etale over the affine open subset $Y[1/n]\subseteq Y$ defined by inverting $n$.
\end{ex}
\begin{ex} Let $E$ be an elliptic curve over $K$ which admits a minimal Weierstrass model $\mathcal{E}$ over $X$. In general, $\mathcal{E}$ is not a group scheme over $X$ and only its smooth locus $\mathcal{E}^\mathrm{sm}\subseteq\mathcal{E}$ is a group scheme over $X$. If $Y\subseteq X$ is the open subset where $\mathcal{E}$ admits good reduction, then $\mathcal{E}|_Y$ becomes an abelian scheme over $Y$. For each $n\in\mathbb{Z}$ relatively prime to the characteristic of $K$, the torsion subgroups $\mathcal{E}|_Y[n^r]\subseteq\mathcal{E}|_Y$ are finite flat over $Y$. Then the exact sequence
\begin{align*}
    \xymatrix{0 \ar[r] & \mathcal{E}|_Y[n] \ar[r] & \mathcal{E}|_Y[n^2] \ar[r]^-n & \mathcal{E}|_Y[n] \ar[r] & 0}
\end{align*}
is an example of \eqref{BF-input}. Moreover, $\mathcal{E}|_Y[n]$ is finite \'etale over the affine open subset $Y[1/n]\subseteq Y$ defined by inverting $n$.
\end{ex}
Applying \eqref{defn-Hc} term by term, using \cite[\href{https://stacks.math.columbia.edu/tag/05R0}{Tag 05R0}]{Stacks}, and taking cohomology groups, we get an anticommutative square:
\begin{align*}
    \xymatrix{
    H_\mathrm{fppf,c}^1(Y,\mathcal{M}_2) \ar[d] \ar[r]^-\delta & H_\mathrm{fppf,c}^2(Y,\mathcal{M}_1) \ar[d] \\
    H_\mathrm{fppf}^1(Y,\mathcal{M}_2) \ar[r]_-\delta & H_\mathrm{fppf}^2(Y,\mathcal{M}_1)
    }
\end{align*}
where each $\delta$ denotes the connecting map. Pasting this with the cup product diagram, we get the following commutative square:
\begin{align*}
    \xymatrix{
    D_\mathrm{fppf}^1(Y,\mathcal{M}_1^\vee)\times H_\mathrm{fppf,c}^1(Y,\mathcal{M}_2) \ar@{->>}[d] \ar[r] & \mathbb{Q}/\mathbb{Z} \ar[d]^-{-1} & (\alpha,\beta) \ar@{|->}[r] & \displaystyle\int_Y\alpha\cup\delta\beta \\
    D_\mathrm{fppf}^1(Y,\mathcal{M}_1^\vee)\times D_\mathrm{fppf}^1(Y,\mathcal{M}_2) \ar[r] & \mathbb{Q}/\mathbb{Z} & &
    }
\end{align*}
Denote the bottom row as follows:
\begin{align}\label{D1D1-pairing}
    \xymatrixcolsep{1.5pc}\xymatrix{D_\mathrm{fppf}^1(Y,\mathcal{M}_1^\vee)\times D_\mathrm{fppf}^1(Y,\mathcal{M}_2) \ar[r] & \mathbb{Q}/\mathbb{Z} & (\alpha,\beta) \ar@{|->}[r] & \displaystyle\int_Y\alpha\cup\delta\beta\rlap{\ .}}
\end{align}

Note that we have \'etale analogues of \eqref{defn-Hc}, \eqref{defn-Hc-im}, \eqref{Global-H3-inv}, and a canonical comparison map coming from \eqref{et-fppf-map}, \eqref{et,c-fppf,c-map}. Since the canonical map is compatible with connecting maps and cup products, \eqref{BF-input} gives a commutative square:
\begin{align*}
    \xymatrix{
    D_\mathrm{\acute{e}t}^1(Y,\mathcal{M}_1^\vee)\times D_\mathrm{\acute{e}t}^1(Y,\mathcal{M}_2) \ar[d] \ar[r]^-{\eqref{D1D1-pairing}^\mathrm{\acute{e}t}} & \mathbb{Q}/\mathbb{Z} \ar@{=}[d] \\
    D_\mathrm{fppf}^1(Y,\mathcal{M}_1^\vee)\times D_\mathrm{fppf}^1(Y,\mathcal{M}_2) \ar[r]_-{\eqref{D1D1-pairing}} & \mathbb{Q}/\mathbb{Z}
    }
\end{align*}
where $\eqref{D1D1-pairing}^\mathrm{\acute{e}t}$ is an analogously defined map for the \'etale cohomology. By Remark \ref{et-fppf-small}, the left column becomes an isomorphism whenever it is restricted to a sufficiently small open subset $U\subseteq Y$.

\section{Arithmetic BF theory for local fields}\label{Localthy}

This section develops the local components of arithmetic BF theory at each closed point $x \in X^{\mathrm{cl}}$, treating the local fields $K_x$ as the 2-dimensional building blocks of a global arithmetic 3-manifold. We reconstruct the gauge-theoretic architecture through the construction of $\mathbb{Q}/\mathbb{Z}$-torsors over local cohomology groups and the isolation of unramified configurations. This categorical framework ensures that local boundary data can be rigorously glued to recover global invariants like the Cassels-Tate pairing. We begin our construction with the short exact sequence of finite discrete $G_K$-modules provided below, which serves as the primary input for the theory and ensures that the local theory remains grounded in the structure of the Galois modules;
we work with an exact sequence of finite discrete $G_K$-modules:
\begin{align}\label{BF-input-GK}
    \xymatrix{0 \ar[r] & M_1 \ar[r]^-\iota & M \ar[r]^-\pi & M_2 \ar[r] & 0\rlap{\ .}}
\end{align}
\begin{ex}\label{fppf-Gal} Evaluating \eqref{BF-input} at $K^\mathrm{sep}$ and using Remark \ref{et-grp}, we get an exact sequence of finite discrete $G_K$-modules:
\begin{align*}
    \xymatrix{0 \ar[r] & \mathcal{M}_1(K^\mathrm{sep}) \ar[r] & \mathcal{M}(K^\mathrm{sep}) \ar[r] & \mathcal{M}_2(K^\mathrm{sep}) \ar[r] & 0\rlap{\ .}}
\end{align*}
This is the example we have in mind. We will see that our arithmetic BF theory for local fields (2-dimensional objects) depends only on the Galois modules.
\end{ex}

In the rest of this section, regarding this exact sequence as input data, we will construct $\mathbb{Q}/\mathbb{Z}$-torsors of the form $\varpi_S:\mathscr{L}_S\rightarrow\mathscr{F}_S$ and $\varpi_S^\mathrm{nr}:\mathscr{L}_S^\mathrm{nr}\rightarrow\mathscr{F}_S^\mathrm{nr}$ for each finite subset $S\subseteq X^\mathrm{cl}$. Furthermore, we construct a section $\mathrm{BF}_S^\mathrm{nr}:\mathscr{F}_S^\mathrm{nr}\rightarrow\mathscr{L}_S^\mathrm{nr}$ of $\varpi_S^\mathrm{nr}$, which serves as the local version of the global BF functional to be developed in Section \ref{Global-BF}. This construction effectively mirrors the topological principle that boundary dynamics are determined by local cohomological data. Finally, the section $\mathrm{BF}_S^\mathrm{nr}$ will play a critical role in the proof of the decomposition formula presented in Theorem \ref{Decomp-formula}.

\subsection{Constructing $\mathbb{Q}/\mathbb{Z}$-torsors}\label{Localthy-torsor}

For each $x\in X^\mathrm{cl}$, we denote
\begin{align*}
    \mathscr{F}_x:=H^1(G_{K,x},M_1^\vee)\times H^1(G_{K,x},M_2)\rlap{\ .}
\end{align*}
Note that this is finite discrete by \cite[Theorem 7.1.8]{NSW08}. Using the group cocycles, we will construct a $\mathbb{Q}/\mathbb{Z}$-torsor $\varpi_x:\mathscr{L}_x\rightarrow\mathscr{F}_x$. For the local analysis, we use the canonical isomorphism (see \cite[I.A]{Mil06})
\begin{align}\label{Local-H2-inv}
    \xymatrix{\mathrm{inv}_x:H^2(G_{K,x},K_x^{\mathrm{sep},\times}) \ar[r]^-\sim & \mathbb{Q}/\mathbb{Z}}
\end{align}
which is compatible with the above identifications.

\begin{defn} Given \eqref{BF-input-GK}, define for each $x\in X^\mathrm{cl}$ a category $\mathscr{A}_x$ to be the action groupoid associated to the homomorphism
\begin{align*}
    \xymatrix{d:C^0(G_{K,x},M_1^\vee)\times C^0(G_{K,x},M_2) \ar[r] & Z^1(G_{K,x},M_1^\vee)\times Z^1(G_{K,x},M_2)\rlap{\ .}}
\end{align*}
Explicitly, the category $\mathscr{A}_x$ is described as follows.
\begin{itemize}
    \item $\mathrm{ob}(\mathscr{A}_x):=Z^1(G_{K,x},M_1^\vee)\times Z^1(G_{K,x},M_2)$.
    \item Given $(\alpha_1,\alpha_2),(\beta_1,\beta_2)\in\mathscr{A}_x$, define
    \begin{align*}
        &\quad\mathscr{A}_x((\alpha_1,\alpha_2),(\beta_1,\beta_2))\\
        &:=\left\{(g_1,g_2)\in C^0(G_{K,x},M_1^\vee)\times C^0(G_{K,x},M_2)\ \middle|\ \beta_i=\alpha_i+dg_i\right\}\rlap{\ .}
    \end{align*}
    \item Composition is defined via the addition of arrows.
\end{itemize}
\end{defn}

\begin{rmk}\label{Rmk-Cx} Let $x\in X^\mathrm{cl}$.
\begin{enumerate}
    \item For each $(\alpha_1,\alpha_2)\in\mathscr{A}_x$, we have
    \begin{align*}
        \mathrm{End}_{\mathscr{A}_x}(\alpha_1,\alpha_2)=Z^0(G_{K,x},M_1^\vee)\times Z^0(G_{K,x},M_2)\rlap{\ .}
    \end{align*}
    \item $\pi_0(\mathscr{A}_x)\cong H^1(G_{K,x},M_1^\vee)\times H^1(G_{K,x},M_2)=\mathscr{F}_x$.
\end{enumerate}
\end{rmk}

\begin{defn} Given \eqref{BF-input-GK}, define a category $\mathscr{R}$ as follows.
\begin{itemize}
    \item $\mathrm{ob}(\mathscr{R}):=\left\{\sigma\in\mathrm{Set}(M_2,M)\ \middle|\ \pi\circ\sigma=\mathrm{Id}_{M_2}\right\}$.
    \item Given $\sigma,\tau\in\mathscr{R}$, define
    \begin{align*}
        \mathscr{R}(\sigma,\tau):=\{f\in\mathrm{Set}(M_2,M_1)\ |\ \tau-\sigma=\iota\circ f\}\rlap{\ .}
    \end{align*}
    \item Composition is defined via the addition of arrows.
\end{itemize}
\end{defn}

The category $\mathscr{R}$ is designed to manage the ambiguity inherent in lifting local boundary data to the global (physical) field configurations.

\begin{rmk} The $\iota$ in \eqref{BF-input-GK} is injective. Hence for each pair $\sigma,\tau\in\mathscr{R}$ there is a unique arrow $\iota^{-1}\circ(\tau-\sigma)\in\mathscr{R}(\sigma,\tau)$. Consequently, $\mathscr{R}$ is a connected groupoid, and every object of $\mathscr{R}$ has the trivial automorphism group.
\end{rmk}

Combining \cite[Corollary 7.2.2]{NSW08} and \cite[Proposition 1.2.5]{NSW08}, we get
\begin{align}\label{local-H3Gm-vanishing}
    H^3(G_{K,x},K_x^{\mathrm{sep},\times})=0\rlap{\ .}
\end{align}
for each $x\in X^\mathrm{cl}$. Hence we have an exact sequence
\begin{align*}
    \xymatrixcolsep{1.5pc}\xymatrix{0 \ar[r] & Z^2(G_{K,x},K_x^{\mathrm{sep},\times}) \ar[r] & C^2(G_{K,x},K_x^{\mathrm{sep},\times}) \ar[r]^-d & Z^3(G_{K,x},K_x^{\mathrm{sep},\times}) \ar[r] & 0\rlap{\ .}}
\end{align*}
Taking mod $B^2(G_{K,x},K_x^{\mathrm{sep},\times})$, we get an exact sequence
\begin{align}\label{Local-fiberseq}
    \xymatrixcolsep{1.4pc}\xymatrix{0 \ar[r] & H^2(G_{K,x},K_x^{\mathrm{sep},\times}) \ar[r] & \displaystyle\frac{C^2(G_{K,x},K_x^{\mathrm{sep},\times})}{B^2(G_{K,x},K_x^{\mathrm{sep},\times})} \ar[r]^-d & Z^3(G_{K,x},K_x^{\mathrm{sep},\times}) \ar[r] & 0\rlap{\ .}}
\end{align}

\begin{defn} Let $x\in X^\mathrm{cl}$.
\begin{itemize}
    \item Denote by $\mathscr{E}_x$ the action groupoid under the translation action on the middle term of \eqref{Local-fiberseq}:
    \begin{align*}
        \xymatrixcolsep{1.2pc}\xymatrix{\displaystyle+:\frac{C^2(G_{K,x},K_x^{\mathrm{sep},\times})}{B^2(G_{K,x},K_x^{\mathrm{sep},\times})}\times\frac{C^2(G_{K,x},K_x^{\mathrm{sep},\times})}{B^2(G_{K,x},K_x^{\mathrm{sep},\times})} \ar[r] & \displaystyle\frac{C^2(G_{K,x},K_x^{\mathrm{sep},\times})}{B^2(G_{K,x},K_x^{\mathrm{sep},\times})}\rlap{\ .}}
    \end{align*}
    \item Denote $\mathscr{B}_x$ the action groupoid under the translation action on the last term of \eqref{Local-fiberseq}:
    \begin{align*}
        \xymatrix{+:Z^3(G_{K,x},K_x^{\mathrm{sep},\times})\times Z^3(G_{K,x},K_x^{\mathrm{sep},\times}) \ar[r] & Z^3(G_{K,x},K_x^{\mathrm{sep},\times})}
    \end{align*}
\end{itemize}
\end{defn}

The category $\mathscr{A}_x$ represents the local (physical) fields and their gauge transformations, while $\mathscr{E}_x$ and $\mathscr{B}_x$ handle the secondary cohomology classes and their differentials.

\begin{rmk}\label{Rmk-ExBx} Let $x\in X^\mathrm{cl}$.
\begin{enumerate}
    \item $\mathscr{E}_x$ and $\mathscr{B}_x$ are connected groupoids such that every object has the trivial automorphism group.
    \item The $d$ in \eqref{Local-fiberseq} induces a functor $\mathbf{d}:\mathscr{E}_x\rightarrow\mathscr{B}_x$. This functor is surjective on objects and arrows because $d$ is surjective.
\end{enumerate}
\end{rmk}

\begin{lem}\label{cup-functor} Given \eqref{BF-input-GK}, for each $x\in X^\mathrm{cl}$ we can define a functor $\Theta_x:\mathscr{R}\times\mathscr{A}_x\rightarrow\mathrm{Set}$ as follows.
\begin{itemize}
    \item  For objects, we take
    \begin{align*}
        \xymatrix{\mathscr{R}\times\mathscr{A}_x \ar[r] & \mathrm{Set} & (\sigma,\alpha_1,\alpha_2) \ar@{|->}[r] & \mathrm{ob}\left(\mathbf{d}^{-1}(\alpha_1\cup d(\sigma\circ\alpha_2))\right)}
    \end{align*}
    where $\mathbf{d}:\mathscr{E}_x\rightarrow\mathscr{B}_x$ is as in Remark \ref{Rmk-ExBx}, and $\{\alpha_1\cup d(\sigma\circ\alpha_2)\}\subseteq\mathscr{B}_x$ is regarded as a discrete category.
    \item Given $\sigma,\tau\in\mathscr{R}$ and an arrow $(g_1,g_2):(\alpha_1,\alpha_2)\rightarrow(\beta_1,\beta_2)$ in $\mathscr{A}_x$, we take
    \begin{align*}
    &\quad\Theta_x(\tau-\sigma,g_1,g_2)\\
    &:=\alpha_1\cup((\tau-\sigma)\circ\alpha_2)+g_1\cup d(\tau\circ\alpha_2)+\beta_1\cup(\tau\circ dg_2-d(\tau\circ g_2))
    \end{align*}
    acting by translation on the middle term of \eqref{Local-fiberseq}.
\end{itemize}
\end{lem}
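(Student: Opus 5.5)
The plan is to check the three things a functor needs, one after another: the object assignment makes sense, each arrow gives a well-defined map between the corresponding sets, and identities and composites are preserved. Every identity is derived from the graded Leibniz rule $d(\alpha\cup\beta)=d\alpha\cup\beta+(-1)^p\alpha\cup d\beta$ in Notation \ref{notation-overall}. The relevant cup product is the one induced by $M_1^\vee\otimes M_1\to K_x^{\mathrm{sep},\times}$, with $M_1$ identified with $\iota(M_1)\subseteq M$ throughout.

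\emph{Objects.} Let $\alpha_2\in Z^1(G_{K,x},M_2)$ and $\sigma\in\mathscr{R}$. The cochain $\sigma\circ\alpha_2$ lies in $C^1(G_{K,x},M)$. Since $\pi$ is a homomorphism, $\pi\circ d(\sigma\circ\alpha_2)=d\alpha_2=0$, so $d(\sigma\circ\alpha_2)$ takes values in $\iota(M_1)$. The cup product $\alpha_1\cup d(\sigma\circ\alpha_2)$ is therefore a $3$-cochain with values in $K_x^{\mathrm{sep},\times}$. It is a cocycle by Leibniz, since $d\alpha_1=0$ and $dd=0$. Hence it is an object of $\mathscr{B}_x$. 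By Remark \ref{Rmk-ExBx} the fibre $\mathbf{d}^{-1}$ over it is nonempty, and by \eqref{Local-fiberseq} it is an $H^2(G_{K,x},K_x^{\mathrm{sep},\times})$-torsor.

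\emph{Arrows.} Let $(\tau-\sigma,g_1,g_2)$ be an arrow from $(\sigma,\alpha_1,\alpha_2)$ to $(\tau,\beta_1,\beta_2)$. I first check that each summand of $\Theta_x(\tau-\sigma,g_1,g_2)$ is $M_1$-valued in the slot that is cupped with $\beta_1$ or $\alpha_1$.
\begin{itemize}
\item $(\tau-\sigma)\circ\alpha_2$ is $M_1$-valued because $\pi\circ(\tau-\sigma)=0$.
\item $\tau\circ dg_2-d(\tau\circ g_2)$ is $M_1$-valued because both terms map to $dg_2$ under $\pi$.
\end{itemize}
Translation by an element $c$ of the middle term of \eqref{Local-fiberseq} sends $\mathbf{d}^{-1}(b)$ to $\mathbf{d}^{-1}(b+dc)$. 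So the key identity to prove is
\begin{align*}
d\,\Theta_x(\tau-\sigma,g_1,g_2)=\beta_1\cup d(\tau\circ\beta_2)-\alpha_1\cup d(\sigma\circ\alpha_2)
\end{align*}
in $Z^3$. I would prove it by differentiating the three summands separately:
\begin{itemize}
\item $d\bigl(\alpha_1\cup(\tau-\sigma)\alpha_2\bigr)=-\alpha_1\cup d(\tau\alpha_2)+\alpha_1\cup d(\sigma\alpha_2)$;
\item $d\bigl(g_1\cup d(\tau\alpha_2)\bigr)=(\beta_1-\alpha_1)\cup d(\tau\alpha_2)$;
\item $d\bigl(\beta_1\cup(\tau\circ dg_2-d(\tau\circ g_2))\bigr)=-\beta_1\cup d(\tau\circ dg_2)$.
\end{itemize}
The sum then has to be compared with $\beta_1\cup d(\tau\circ(\alpha_2+dg_2))$.

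This comparison is the step I expect to be the main obstacle. The section $\tau$ is only a set map, so $\tau\circ(\alpha_2+dg_2)$ differs from $\tau\circ\alpha_2+\tau\circ dg_2$ by an $M_1$-valued $1$-cochain, and its differential must be shown to pair to zero against the cocycle $\beta_1$. It may instead be absorbed by the precise form of the last summand, possibly after fixing the cochain and sign conventions. I would first try to rewrite the defect $\tau(a+b)-\tau(a)-\tau(b)$ as the restriction of the additive "cocycle of the extension" $M_2\times M_2\to M_1$. I would then use that $dg_2$ is a coboundary of a $0$-cochain, so its contribution can be moved inside a $d(\cdots)$.

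\emph{Functoriality.} For composable arrows $(\tau-\sigma,g_1,g_2)$ and $(\rho-\tau,h_1,h_2)$, I would show that
\begin{align*}
\Theta_x(\rho-\sigma,g_1+h_1,g_2+h_2)-\Theta_x(\rho-\tau,h_1,h_2)-\Theta_x(\tau-\sigma,g_1,g_2)\in B^2(G_{K,x},K_x^{\mathrm{sep},\times}).
\end{align*}
I would do this by collecting terms and exhibiting the difference as $d$ of explicit $1$-cochains such as $g_1\cup(\rho-\tau)\alpha_2$ and $g_1\cup(\text{$M_1$-valued part of }\rho\circ g_2)$, again by Leibniz. For identities ($\tau=\sigma$, $g_i=0$), the first two summands vanish and the third is $\beta_1\cup(\sigma(0)-d\sigma(0))$, with $\sigma(0)\in\iota(M_1)$. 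I would verify this vanishes in the quotient by $B^2$, normalizing $\sigma(0)=0$ if necessary, which is harmless since $\mathscr{R}$ is a connected groupoid with trivial automorphisms.
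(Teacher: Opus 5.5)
Your object step is fine. The step you flag as the main obstacle, however, is a genuine gap, and for the formula exactly as printed it cannot be closed. The fibre condition $d\,\Theta_x(\tau-\sigma,g_1,g_2)=\beta_1\cup d(\tau\circ\beta_2)-\alpha_1\cup d(\sigma\circ\alpha_2)$ fails in general, and your own three Leibniz computations (which are correct) already show this. Adding your bullets and subtracting the target leaves
\[
2\,\alpha_1\cup d\bigl((\sigma-\tau)\circ\alpha_2\bigr)-2\,\beta_1\cup d(\tau\circ dg_2)-\beta_1\cup d\bigl(c_\tau(\alpha_2,dg_2)\bigr),\qquad c_\tau(a,b):=\tau(a+b)-\tau(a)-\tau(b).
\]
So besides the non-additivity of $\tau$, there are sign errors in the first and third summands. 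The cleanest witness is a pure $\mathscr{R}$-arrow $(\tau-\sigma,0,0)$ with $\sigma(0)=\tau(0)=0$. Then $\Theta_x=\alpha_1\cup((\tau-\sigma)\circ\alpha_2)$ and $d\Theta_x=\alpha_1\cup d(\sigma\circ\alpha_2)-\alpha_1\cup d(\tau\circ\alpha_2)$, which is the negative of what is required. The discrepancy $2\,\alpha_1\cup d((\sigma-\tau)\circ\alpha_2)$ is nonzero in general.

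Your proposed remedies cannot help. The fibres of $\mathbf{d}$ over distinct elements of $Z^3(G_{K,x},K_x^{\mathrm{sep},\times})$ are disjoint, so the identity must hold on the nose in $Z^3$. Showing that a discrepancy is $d$ of something, or moving $dg_2$ inside a $d(\cdots)$, therefore proves nothing. Likewise $\beta_1\cup d(c_\tau(\alpha_2,dg_2))$ has no reason to vanish. Your composition check inherits the same problem. Already for a fixed additive $\sigma$, the composition defect of the printed formula is $d(h_1\cup X)-2\,h_1\cup dX$ with $X=\sigma\circ dg_2-d(\sigma\circ g_2)$, and this need not be a coboundary.

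The missing idea is that the arrow assignment has to be corrected rather than verified. Set
\[
\Theta_x(\tau-\sigma,g_1,g_2):=-\alpha_1\cup\bigl((\tau-\sigma)\circ\alpha_2\bigr)+g_1\cup d(\tau\circ\alpha_2)-\beta_1\cup\bigl(\tau\circ\beta_2-\tau\circ\alpha_2-d(\tau\circ g_2)\bigr).
\]
The last bracket is $M_1$-valued because its image under $\pi$ is $dg_2-dg_2=0$, so the non-additivity of $\tau$ never enters. The same Leibniz bookkeeping then gives exactly $\beta_1\cup d(\tau\circ\beta_2)-\alpha_1\cup d(\sigma\circ\alpha_2)$. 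For composable arrows $(\tau-\sigma,g_1,g_2)$ and $(\rho-\tau,h_1,h_2)$, the composition defect is the coboundary
\[
d\bigl(g_1\cup((\rho-\tau)\circ\alpha_2)+h_1\cup w-\beta_1\cup v\bigr),
\]
where $w=\rho\circ\alpha_2-\rho\circ\beta_2+d\bigl(\rho\circ(g_2+h_2)-\rho\circ h_2\bigr)$ and $v=\rho\circ(g_2+h_2)-\rho\circ h_2-\tau\circ g_2$ are both $M_1$-valued. An identity arrow goes to $\alpha_1\cup d(\sigma(0))=-d(\alpha_1\cup\sigma(0))$, so no normalization $\sigma(0)=0$ is needed.

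By comparison, the paper's proof only checks the composition law modulo $B^2$. It splits $\Theta_x$ into an $\mathscr{R}$-part and an $\mathscr{A}_x$-part and tacitly treats $\sigma$ as additive. It never verifies that translation lands in the correct fibre, so it does not fill this gap either. You were right to isolate that step, but it is precisely where the printed formula breaks.
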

\begin{proof} The functoriality of $\Theta_x$ is equivalent to the biadditivity of $\Theta_x$ on arrows. Note that composition on $\mathscr{E}_x$ is defined via the addition of arrows and we have
\begin{align*}
    \Theta_x(\tau-\sigma,g_1,g_2)=\Theta_x(\tau-\sigma,0_{(\alpha_1,\alpha_2)})+\Theta_x(0_\tau,g_1,g_2)
\end{align*}
with
\begin{align*}
    \Theta_x(\tau-\sigma,0_{(\alpha_1,\alpha_2)})=\alpha_1\cup((\tau-\sigma)\circ\alpha_2)
\end{align*}
\begin{align*}
    \Theta_x(0_\tau,g_1,g_2)=g_1\cup d(\tau\circ\alpha_2)+\beta_1\cup(\tau\circ dg_2-d(\tau\circ g_2))\rlap{\ .}
\end{align*}
Hence it suffices to verify the additivity of $\Theta_x$ on each component $\mathscr{R}$ and $\mathscr{A}_x$. The additivity on $\mathscr{R}$ follows immediately from the definition.  Given another arrow $(h_1,h_2):(\beta_1,\beta_2)\rightarrow(\gamma_1,\gamma_2)$, we have
\begin{align*}
    &\quad\Theta_x(0_\sigma,g_1+h_1,g_2+h_2)-\Theta_x(0_\sigma,g_1,g_2)-\Theta_x(0_\sigma,h_1,h_2)\\
    &=(g_1+h_1)\cup d(\sigma\circ\alpha_2)+\gamma_1\cup(\sigma\circ d(g_2+h_2)-d(\sigma\circ(g_2+h_2)))\\
    &\quad-g_1\cup d(\sigma\circ\alpha_2)-\beta_1\cup(\sigma\circ dg_2-d(\sigma\circ g_2))\\
    &\quad\quad-h_1\cup d(\sigma\circ\beta_2)-\gamma_1\cup(\sigma\circ dh_2-d(\sigma\circ h_2))\\
    &=dh_1\cup(\sigma\circ dg_1-d(\sigma\circ g_1))-h_1\cup d(\sigma\circ dg_2)\\
    &=d(h_1\cup(\sigma\circ dg_2-d(\sigma\circ g_1)))\rlap{\ .}
\end{align*}
Since we are working modulo $B^2(G_{K,x},K_x^{\mathrm{sep},\times})$, the desired additivity follows.
\end{proof}

We may regard $\Theta_x$ as an action of $\mathscr{R}\times\mathscr{A}_x$ on $\mathscr{E}_x$ in the following sense. We have a disjoint union decomposition:
\begin{align*}
    \mathrm{ob}(\mathscr{E}_x)=\frac{C^2(G_{K,x},K_x^{\mathrm{sep},\times})}{B^2(G_{K,x},K_x^{\mathrm{sep},\times})}=\coprod_{z\in Z^3(G_{K,x},K_x^{\mathrm{sep},\times})}d^{-1}(z)\rlap{\ .}
\end{align*}
Then $\Theta_x$ sends arrows in $\mathscr{R}\times\mathscr{A}_x$ to isomorphisms of full subcategories spanned by the corresponding summands, and these isomorphisms are natural in $\mathscr{R}\times\mathscr{A}_x$. Consequently, the equivalence relation on $\mathrm{ob}(\mathscr{E}_x)$ defined by the $\Theta_x$-action is stable under composition, and hence we have a quotient category:
\begin{align*}
    \xymatrix{\mathscr{E}_x \ar[r] & \mathscr{E}_x/\Theta_x\rlap{\ .}}
\end{align*}

On the other hand, applying $\mathbf{d}$ to the values of $\Theta_x$, we get a functor
\begin{align*}
    \xymatrixrowsep{0pc}\xymatrix{
    & & (\sigma,\alpha_1,\alpha_2) \ar@{|->}[r] & \alpha_1\cup d(\sigma\circ\alpha_2) \\
    \mathbf{d}_\ast\Theta_x:\mathscr{R}\times\mathscr{A}_x \ar[r] & \mathscr{B}_x & & \\
    & & (\tau-\sigma,g_1,g_2) \ar@{|->}[r] & {\begin{array}{c} \textrm{translation by} \\ d(\Theta_x(\tau-\sigma,g_1,g_2)) \end{array}}
    }
\end{align*}

Let $x\in X^\mathrm{cl}$. For each $(\sigma,\alpha_1,\alpha_2)\in\mathscr{R}\times\mathscr{A}_x$ define $\mathscr{L}_x^\sigma(\alpha_1,\alpha_2)$ to be the fiber, regarding $\{(\sigma,\alpha_1,\alpha_2)\}$ as a discrete category:
\begin{align*}
    \xymatrix{
    \mathscr{L}_x^\sigma(\alpha_1,\alpha_2) \ar[d] \ar@{^(->}[rr] & & \mathscr{E}_x \ar[d]^-{\mathbf{d}} \\
    \{(\sigma,\alpha_1,\alpha_2)\} \ar@{^(->}[r] & \mathscr{R}\times\mathscr{A}_x \ar[r]_-{\mathbf{d}_\ast\Theta_x} & \mathscr{B}_x
    }
\end{align*}
By Remark \ref{Rmk-ExBx}, $\mathscr{L}_x^\sigma(\alpha_1,\alpha_2)\subseteq\mathscr{E}_x$ is a full subcategory with
\begin{align*}
    \mathrm{ob}(\mathscr{L}_x^\sigma(\alpha_1,\alpha_2))=d^{-1}(\alpha_1\cup d(\sigma\circ\alpha_2))
\end{align*}
which is an $H^2(G_{K,x},K_x^{\mathrm{sep},\times})$-torsor because of \eqref{Local-fiberseq}. Along the isomorphism \eqref{Local-H2-inv}, $\mathscr{L}_x^\sigma(\alpha_1,\alpha_2)$ becomes a $\mathbb{Q}/\mathbb{Z}$-torsor. Moreover, by Lemma \ref{cup-functor}, given $\sigma,\tau\in\mathscr{R}$ and an arrow $(g_1,g_2):(\alpha_1,\alpha_2)\rightarrow(\beta_1,\beta_2)$ in $\mathscr{A}_x$, addition by $\Theta_x(\tau-\sigma,g_1,g_2)$ induces an isomorphism of $\mathbb{Q}/\mathbb{Z}$-torsors:
\begin{align}\label{d-fiber-isom}
    \xymatrix{\Theta_x(\tau-\sigma,g_1,g_2):\mathscr{L}_x^\sigma(\alpha_1,\alpha_2) \ar[r]^-\sim & \mathscr{L}_x^\tau(\beta_1,\beta_2)}
\end{align}
Denote $\mathscr{T}_1$ the category of $\mathbb{Q}/\mathbb{Z}$-torsors. From the above observation, we deduce that the following assignment defines a functor:
\begin{align*}
    \xymatrix{\mathscr{R}\times\mathscr{A}_x \ar[r] & \mathscr{T}_1 & (\sigma,\alpha_1,\alpha_2) \ar@{|->}[r] & \mathscr{L}_x^\sigma(\alpha_1,\alpha_2)\rlap{\ .}}
\end{align*}
On the other hand, regarding sets as discrete categories, take a pushout\footnote{It is well-known that the category $\mathrm{Cat}$ of small categories is cocomplete. For example, there is proof using the adjoint functor theorem. For elementary construction of coequalizers in $\mathrm{Cat}$, see \cite{BBP99}.}
\begin{align*}
    \xymatrix{
    \mathscr{R}\times\mathscr{A}_x \ar[d] \ar[r]^-{\mathbf{d}_\ast\Theta_x} & \mathscr{B}_x \ar[d] \\
    \pi_0(\mathscr{R}\times\mathscr{A}_x) \ar[r] & \mathscr{B}_x/(\mathbf{d}_\ast\Theta_x)
    }
\end{align*}
i.e., identify objects that can be connected by a zigzag of arrows in the image of $\mathscr{R}\times\mathscr{A}_x$. Then we have the following diagram of functors (cf. Remark \ref{Rmk-Cx}):
\begin{align*}
    \xymatrix{
    & \mathscr{E}_x/\Theta_x \ar[d]^-{\mathbf{d}} \\
    \mathscr{F}_x \ar[r]_-{\mathbf{d}_\ast\Theta_x} & \mathscr{B}_x/(\mathbf{d}_\ast\Theta_x)\rlap{\ .}
    }
\end{align*}
By the above observation, $\Theta_x$ only connects elements in various $\mathscr{L}_x^\sigma(\alpha_1,\alpha_2)$ and does not connect two different elements in $\mathscr{L}_x^\sigma(\alpha_1,\alpha_2)$. Hence the composite
\begin{align*}
    \xymatrix{\mathscr{L}_x^\sigma(\alpha_1,\alpha_2) \ar@{^(->}[r] & \mathscr{E}_x \ar[r] & \mathscr{E}_x/\Theta_x\rlap{\ .}}
\end{align*}
is still injective, and the isomorphisms \eqref{d-fiber-isom} map to the identity maps along the quotient functor $\mathscr{E}_x\rightarrow\mathscr{E}_x/\Theta_x$. Moreover, each fiber
\begin{align*}
    \xymatrix{
    \mathscr{L}_x(\rho_1,\rho_2) \ar[d] \ar@{^(->}[rr] & & \mathscr{E}_x/\Theta_x \ar[d]^-{\mathbf{d}} \\
    \{(\rho_1,\rho_2)\} \ar@{^(->}[r] & \mathscr{F}_x \ar[r]_-{\mathbf{d}_\ast\Theta_x} & \mathscr{B}_x/(\mathbf{d}_\ast\Theta_x)
    }
\end{align*}
is uniquely identified with every $\mathscr{L}_x^\sigma(\alpha_1,\alpha_2)$ such that $[\alpha_i]=\rho_i$. Since the quotient map $\mathscr{E}_x\rightarrow\mathscr{E}_x/\Theta_x$ is $\mathbb{Q}/\mathbb{Z}$-equivariant, $\mathscr{L}_x(\rho_1,\rho_2)$ is still a $\mathbb{Q}/\mathbb{Z}$-torsor. Denote
\begin{align*}
    \mathscr{L}_x:=\coprod_{(\rho_1,\rho_2)\in\mathscr{F}_x}\mathscr{L}_x(\rho_1,\rho_2)
\end{align*}
the disjoint union. Then the projection to its index set:
\begin{align*}
    \xymatrix{\displaystyle\varpi_x:\mathscr{L}_x \ar[r] & \mathscr{F}_x}
\end{align*}
becomes a $\mathbb{Q}/\mathbb{Z}$-torsor over $\mathscr{F}_x$. Since $\mathscr{F}_x$ is finite discrete by Remark \ref{Hfppftop}, choosing a point in each $\mathscr{L}_x(\rho_1,\rho_2)$ gives a section of $\varpi_x$. Hence $\varpi_x$ is isomorphic to the trivial $\mathbb{Q}/\mathbb{Z}$-torsor over $\mathscr{F}_x$.

Given a finite subset $S\subseteq X^\mathrm{cl}$, we denote
\begin{align*}
    \mathscr{F}_S:=\prod_{x\in S}\mathscr{F}_x=\prod_{x\in S}\left(H^1(G_{K,x},M_1^\vee)\times H^1(G_{K,x},M_2)\right)
\end{align*}
which is finite discrete, being a finite product of finite discrete groups. We will construct a $\mathbb{Q}/\mathbb{Z}$-torsor $\varpi_S:\mathscr{L}_S\rightarrow\mathscr{F}_S$.

If $S=\emptyset$, then this definition yields $\mathscr{F}_\emptyset=0$. In this case, we take the trivial $\mathbb{Q}/\mathbb{Z}$-torsor $\mathscr{L}_\emptyset:=\mathbb{Q}/\mathbb{Z}$.

Assume now that $S\neq\emptyset$. In this case, we will construct $\varpi_S$ from the $\mathbb{Q}/\mathbb{Z}$-torsors $\varpi_x:\mathscr{L}_x\rightarrow\mathscr{F}_x$.

\begin{notation} Let $H$ be an abelian group. Given $H$-sets $\mathscr{L}_1$ and $\mathscr{L}_2$, denote
\begin{align*}
    \mathscr{L}_1\times_H\mathscr{L}_2:=\frac{\mathscr{L}_1\times\mathscr{L}_2}{(\varphi_1+h,\varphi_2)\sim(\varphi_1,\varphi_2+h)}
\end{align*}
This is again an $H$-set under the action
\begin{align*}
    \xymatrix{\mathscr{L}_1\times_H\mathscr{L}_2\times H \ar[r] & \mathscr{L}_1\times_H\mathscr{L}_2 & ([\varphi_1,\varphi_2],h) \ar@{|->}[r] & [\varphi_1+h,\varphi_2]\rlap{\ .}}
\end{align*}

If $\mathscr{L}_1$ and $\mathscr{L}_2$ are $H$-torsors, then $(\varphi_1,\varphi_2)\sim(\widetilde{\varphi}_1,\widetilde{\varphi}_2)$ if and only if
\begin{align*}
    (\varphi_1-\widetilde{\varphi}_1)+(\varphi_2-\widetilde{\varphi}_2)=0
\end{align*}
where $\varphi_i-\widetilde{\varphi}_i\in H$ is the unique element such that $\varphi_i=\widetilde{\varphi}_i+(\varphi_i-\widetilde{\varphi}_i)$. Given $[\varphi_1,\varphi_2],[\psi_1,\psi_2]\in \mathscr{L}_1\times_H\mathscr{L}_2$, we have
\begin{align*}
    [\varphi_1,\varphi_2]&=[\varphi_1+(\psi_1-\varphi_1)+(\psi_2-\varphi_2),\varphi_2]\\
    &=[\varphi_1+(\psi_1-\varphi_1),\varphi_2+(\psi_2-\varphi_2)]=[\psi_1,\psi_2]
\end{align*}
Therefore, $\mathscr{L}_1\times_H\mathscr{L}_2$ is again an $H$-torsor.

Moreover, given $H$-torsors $\mathscr{L}_1,\cdots,\mathscr{L}_n$, we can iterate this construction to get an $H$-torsor $\mathscr{L}_1\times_H\cdots\times_H\mathscr{L}_n$. Then $(\varphi_i)_{i=1,\cdots,n}\sim(\psi_i)_{i=1,\cdots,n}$ if and only if
\begin{align*}
    \sum_{i=1}^n(\varphi_i-\psi_i)=0
\end{align*}
by the above observation.
\end{notation}

Given $\rho_S:=(\rho_{x,1},\rho_{x,2})_{x\in S}\in\mathscr{F}_S$, we apply the above construction to the finite family $(\mathscr{L}_x(\rho_{x,1},\rho_{x,2}))_{x\in S}$ to get a $\mathbb{Q}/\mathbb{Z}$-torsor $\mathscr{L}_S(\rho_S)$. Denote
\begin{align*}
    \mathscr{L}_S:=\coprod_{\rho_S\in\mathscr{F}_S}\mathscr{L}_S(\rho_S)
\end{align*}
the disjoint union. Then the projection to its index set:
\begin{align*}
    \xymatrix{\varpi_S:\mathscr{L}_S \ar[r] & \mathscr{F}_S}
\end{align*}
becomes a $\mathbb{Q}/\mathbb{Z}$-torsor over $\mathscr{F}_S$. Since $\mathscr{F}_S$ is finite discrete, choosing a point in each $\mathscr{L}_S(\rho_S)$ gives a section of $\varpi_S$. Hence $\varpi_S$ is isomorphic to a trivial $\mathbb{Q}/\mathbb{Z}$-torsor over $\mathscr{F}_S$.

\subsection{Local unramified BF functionals}\label{Localthy-BFnr}

For each $x\in X^\mathrm{cl}$, we denote
\begin{eqnarray}\label{fnr}
     \mathscr{F}_x^\mathrm{nr}&:=H^1(G_{K,x}^\mathrm{nr},(M_1^\vee)^{I_{K,x}})\times H^1(G_{K,x}^\mathrm{nr},\pi(M^{I_{K,x}}))\rlap{\ .}
\end{eqnarray}

Since $\pi$ is $G_K$-linear, $\pi(M^{I_{K,x}})\subseteq M_2^{I_{K,x}}$ so the definition makes sense. Since $I_{K,x}$ acts trivially on $\pi(M^{I_{K,x}})$, the inflation map induces an injection
\begin{align*}
    \xymatrix{\mathrm{inf}:H^1(G_{K,x}^\mathrm{nr},\pi(M^{I_{K,x}})) \ar@{^(->}[r] & H^1(G_{K,x},\pi(M^{I_{K,x}}))\rlap{\ .}}
\end{align*}
Since the target is finite by \cite[Theorem 7.1.8]{NSW08}, $\mathscr{F}_x^\mathrm{nr}$ is finite.
From this inclusion and the inflation maps, we get a canonical map of abelian groups:
\begin{align}\label{Fxnr-to-Fx}
    \xymatrixcolsep{3pc}\xymatrix{\mathscr{F}_x^\mathrm{nr} \ar[r] & H^1(G_{K,x}^\mathrm{nr},(M_1^\vee)^{I_{K,x}})\times H^1(G_{K,x}^\mathrm{nr},M_2^{I_{K,x}}) \ar@{^(->}[r]^-{\mathrm{inf}\times\mathrm{inf}} & \mathscr{F}_x\rlap{\ .}}
\end{align}
Note that $\pi(M^{I_{K,x}})=M_2^{I_{K,x}}$ for all but finitely many $x\in X^\mathrm{cl}$. If this is the case, the first map becomes an isomorphism, and hence the composite becomes an injection.

\begin{rmk}
(1)    If one uses the following definition
\begin{align*}
\mathscr{F}_x^\mathrm{nr}&=H^1(G_{K,x}^\mathrm{nr},(M_1^\vee)^{I_{K,x}})\times H^1(G_{K,x}^\mathrm{nr},M_2^{I_{K,x}}),
\end{align*}
then it causes a problem because $M^{I_{K,x}}\rightarrow M_2^{I_{K,x}}$ is not surjective in general, so we do not have the connecting map.

(2) If one uses the following choice
\begin{align*}
\mathscr{F}_x^\mathrm{nr}&=H^1(G_{K,x}^\mathrm{nr},\iota^\vee((M^\vee)^{I_{K,x}}))\times H^1(G_{K,x}^\mathrm{nr},M_2^{I_{K,x}}),
\end{align*}
then it agrees with \eqref{fnr} but beginning with the exact sequence:
\begin{align*}
    \xymatrix{0 \ar[r] & M_2^\vee \ar[r]^-{\pi^\vee} & M^\vee \ar[r]^-{\iota^\vee} & M_1^\vee \ar[r] & 0,}
\end{align*}
where $(-)^\vee=\mathrm{Ab}(-,K_x^{\mathrm{sep},\times})$ is exact.
\end{rmk}

Given a nonempty finite subset $S\subseteq X^\mathrm{cl}$, we denote
\begin{align*}
    \mathscr{F}_S^\mathrm{nr}:=\prod_{x\in S}\mathscr{F}_x^\mathrm{nr}=\prod_{x\in S}\left(H^1(G_{K,x}^\mathrm{nr},(M_1^\vee)^{I_{K,x}})\times H^1(G_{K,x}^\mathrm{nr},\pi(M^{I_{K,x}})\right)\rlap{\ .}
\end{align*}
Taking the product of \eqref{Fxnr-to-Fx} over $S$, we get a canonical map $\mathscr{F}_S^\mathrm{nr}\rightarrow\mathscr{F}_S$. Then we get a pullback torsor:
\begin{align*}
    \xymatrix{
    \mathscr{L}_S^\mathrm{nr} \ar[d]_-{\varpi_S^\mathrm{nr}} \ar[r] & \mathscr{L}_S \ar[d]^-{\varpi_S} \\
    \mathscr{F}_S^\mathrm{nr} \ar[r] & \mathscr{F}_S\rlap{\ .}
    }
\end{align*}
We will construct a map $\mathrm{BF}_S^\mathrm{nr}:\mathscr{F}_S^\mathrm{nr}\rightarrow\mathscr{L}_S$ such that $\varpi_S\circ\mathrm{BF}_S^\mathrm{nr}=(\mathscr{F}_S^\mathrm{nr}\rightarrow\mathscr{F}_S)$, or equivalently a section $\mathrm{BF}_S^\mathrm{nr}:\mathscr{F}_S^\mathrm{nr}\rightarrow\mathscr{L}_S^\mathrm{nr}$ of $\varpi_S^\mathrm{nr}$.

Given $\rho_x=(\rho_{x,1},\rho_{x,2})\in\mathscr{F}_x^\mathrm{nr}$, choose a representative:
\begin{align*}
    (\alpha_{x,1}^\mathrm{nr},\alpha_{x,2}^\mathrm{nr})\in Z^1(G_{K,x}^\mathrm{nr},(M_1^\vee)^{I_{K,x}})\times Z^1(G_{K,x}^\mathrm{nr},\pi(M^{I_{K,x}}))
\end{align*}
We may choose $\sigma\in\mathscr{R}$ such that $\sigma(\pi(M^{I_{K,x}}))\subseteq M^{I_{K,x}}$. For example, choose first a set-theoretic section of $\pi:M^{I_{K,x}}\rightarrow\pi(M^{I_{K,x}})$ and extend it to a set-theoretic section $\sigma$ of $\pi$. Since $M_1$ is finite, we have $M_1^\vee\cong\mathrm{Ab}(M_1,\mathcal{O}_x^{\mathrm{sep},\times})$ so the evaluation pairing fits into the following commutative square:
\begin{align*}
    \xymatrix{
    (M_1^\vee)^{I_{K,x}}\times M_1^{I_{K,x}} \ar@{^(->}[d] \ar[r] & \mathcal{O}_x^{\mathrm{nr},\times} \ar@{^(->}[d] \\
    M_1^\vee\times M_1 \ar[r] & \mathcal{O}_x^{\mathrm{sep},\times}
    }
\end{align*}
and hence we get
\begin{align*}
    \alpha_{x,1}^\mathrm{nr}\cup d(\sigma\circ\alpha_{x,2}^\mathrm{nr})\in Z^3(G_{K,x}^\mathrm{nr},\mathcal{O}_x^{\mathrm{nr},\times})\rlap{\ .}
\end{align*}
Note that $G_{K,x}^\mathrm{nr}\cong\widehat{\mathbb{Z}}$ so by \cite[p.173, Example]{NSW08} and \cite[Proposition 7.1.2]{NSW08}, we have
\begin{align}\label{local-HGnr-vanishing}
    H^q(G_{K,x}^\mathrm{nr},\mathcal{O}_x^{\mathrm{nr},\times})=0\quad\textrm{for $q\geq2$.}
\end{align}

\begin{comment}
{\color{blue}
The above vanishing holds even for $q=1$.
We might add the following as a remark: for any finite unramified extension $L/K_x$ ,
\begin{align*}
 H^q(G(L/K_x),L^\times ) \simeq H^{q-2}(G(L/K_x), \mathbb{Z})\quad\textrm{for $q\geq3$.} \\
\end{align*}
For $n=2$, we have $H^2(G_{K,x}^\mathrm{nr},K_x^{\mathrm{nr},\times}) \simeq \mathbb{Q}/\mathbb{Z}$. 
}

{\color{magenta} I do not know your intention. Freely add the remarks if you want. I stated as \eqref{local-HGnr-vanishing} because we only need the vanishing for $q\geq2$ in defining the unramified BF functional. If you want, replace ``$q\geq2$'' by ``$q\geq1$''.}
\end{comment}

By \eqref{local-HGnr-vanishing} for $q=3$, we have
\begin{align}\label{BFnrx-lift}
    d\varphi_x^\mathrm{nr}=\alpha_{x,1}^\mathrm{nr}\cup d(\sigma\circ\alpha_{x,2}^\mathrm{nr})\quad\textrm{for some}\quad\varphi_x^\mathrm{nr}\in\frac{C^2(G_{K,x}^\mathrm{nr},\mathcal{O}_x^{\mathrm{nr},\times})}{B^2(G_{K,x}^\mathrm{nr},\mathcal{O}_x^{\mathrm{nr},\times})}\rlap{\ .}
\end{align}

By \eqref{local-HGnr-vanishing} for $q=2$, this $\varphi_x^\mathrm{nr}$ is unique. Denote $\varphi_x$ its image under the composition:
\begin{align*}
    \xymatrix{
    \displaystyle\frac{C^2(G_{K,x}^\mathrm{nr},\mathcal{O}_x^{\mathrm{nr},\times})}{B^2(G_{K,x}^\mathrm{nr},\mathcal{O}_x^{\mathrm{nr},\times})} \ar[r] & \displaystyle\frac{C^2(G_{K,x}^\mathrm{nr},K_x^{\mathrm{nr},\times})}{B^2(G_{K,x}^\mathrm{nr},K_x^{\mathrm{nr},\times})} \ar[r] & \displaystyle\frac{C^2(G_{K,x},K_x^{\mathrm{sep},\times})}{B^2(G_{K,x},K_x^{\mathrm{sep},\times})}=\mathrm{ob}(\mathscr{E}_x)\rlap{\ .}
    }
\end{align*}
Given another $\beta_x^\mathrm{nr}=(\beta_{x,1}^\mathrm{nr},\beta_{x,2}^\mathrm{nr})$ and $\tau\in\mathscr{R}$ as above, since $[\alpha_{x,i}^\mathrm{nr}]=\rho_{x,i}=[\beta_{x,i}^\mathrm{nr}]$ for $i=1,2$, we have an arrow in $\mathscr{R}\times\mathscr{A}_x$:
\begin{align*}
    \xymatrix{(\tau-\sigma,g_{x,1},g_{x,2}):(\sigma,\alpha_{x,1}^\mathrm{nr},\alpha_{x,2}^\mathrm{nr}) \ar[r] & (\tau,\beta_{x,1}^\mathrm{nr},\beta_{x,2}^\mathrm{nr})}
\end{align*}
regarding $\alpha_x^\mathrm{nr},\beta_x^\mathrm{nr}$ as objects in $\mathscr{A}_x$ via $(M_1^\vee)^{I_{K,x}}\times\pi(M^{I_{K,x}})\subseteq M_1^\vee\times M_2$. By the functoriality of $\mathbf{d}_\ast\Theta_x$, we have
\begin{align*}
    d(\varphi_x^\mathrm{nr}+\Theta_x(\tau-\sigma,g_1,g_2))&=\alpha_{x,1}^\mathrm{nr}\cup d(\sigma\circ\alpha_{x,2}^\mathrm{nr})+(\mathbf{d}_\ast\Theta_x)(\tau-\sigma,g_1,g_2)\\
    &=\beta_{x,1}^\mathrm{nr}\cup d(\tau\circ\beta_{x,2}^\mathrm{nr})\rlap{\ .}
\end{align*}
This shows that $\varphi_x\bmod\Theta_x\in\mathscr{E}_x/\Theta_x$ is uniquely defined so
\begin{align}\label{FVS-BF-functional}
    \xymatrix{\mathrm{BF}_S^\mathrm{nr}:\mathscr{F}_S^\mathrm{nr} \ar[r] & \mathscr{L}_S & (\rho_x)_{x\in S} \ar@{|->}[r] & [(\varphi_x\bmod\Theta_x)_{x\in S}]\rlap{\ .}}
\end{align}
is a well-defined map. By construction, we have
\begin{align*}
    ([d\varphi_x])_{x\in S}=([\alpha_{x,1}\cup d(\sigma\circ\alpha_{x,2})])_{x\in S}=((\mathbf{d}_\ast\Theta_x)(\rho_x))_{x\in S}
\end{align*}
which shows that $\mathrm{BF}_S^\mathrm{nr}$ defines the desired section.

\section{Classical arithmetic BF theory}\label{Classical-BF}

\subsection{Spaces of fields}\label{ClassicalBF-SoF} Again, let $Y\subseteq X$ be an open subset. As inputs, we need the following data.
\begin{itemize}
    \item An exact sequence of nice commutative group schemes over $Y$ \eqref{BF-input}:
    \begin{align*}
        \xymatrix{0 \ar[r] & \mathcal{M}_1 \ar[r] & \mathcal{M} \ar[r] & \mathcal{M}_2 \ar[r] & 0\rlap{\ .}}
    \end{align*}
    After evaluating at $K^\mathrm{sep}$, we get an exact sequence as in Example \ref{fppf-Gal} which we simply denote
    \begin{align}\label{numberfeq}
    \xymatrix{0 \ar[r] & M_1 \ar[r]^-\iota & M \ar[r]^-\pi & M_2 \ar[r] & 0\rlap{\ .}}
    \end{align}
     From now on, we will freely use the identifications between Galois cohomology and fppf cohomology over fields and discrete valuation rings. For details, see Appendix \ref{fppf-comparison}.
    \item A subgroup of ``Boundary conditions''
    \begin{align*}
        \mathrm{BC}=\prod_{x\in X^\mathrm{cl}}\mathrm{BC}_x\subseteq\prod_{x\in X^\mathrm{cl}}\mathscr{F}_x:=\prod_{x\in X^\mathrm{cl}}H^1(G_{K,x},M_1^\vee)\times H^1(G_{K,x},M_2)
    \end{align*}
    such that $\im(\mathscr{F}_y^\mathrm{nr}\rightarrow\mathscr{F}_y)\subseteq\mathrm{BC}_y\subseteq\mathscr{F}_y$ for $y\in Y^\mathrm{cl}$.
\end{itemize}

Given a finite set $S\subseteq X^\mathrm{cl}$, denote $X_S:= X\setminus S$. This is an affine scheme whose coordinate ring is the ring of $S$-integers:
\begin{align*}
    \Gamma(X_S,\mathcal{O}_{X_S})=\left\{f\in K\ \middle|\ \textrm{$\mathrm{ord}_u(f)\geq0$ for every $x\in X^\mathrm{cl}\setminus S$}\right\}\rlap{\ .}
\end{align*}
We define the restriction of $\mathrm{BC}$ on ``the boundary of $X_S$'' to be
\begin{align*}
    \mathrm{BC}(\partial X_S)=\prod_{x\in X^\mathrm{cl}}\mathrm{BC}(\partial X_S)_x:=\prod_{x\in X_S\setminus Y}\{0\}\times\prod_{x\in S}\mathrm{BC}_x\times\prod_{y\in Y_S^\mathrm{cl}}\mathscr{F}_y^\mathrm{nr}\rlap{\ .}
\end{align*}
Then we define the space of fields $\mathscr{F}(X_S)$ on $X_S$ to be the pullback
\begin{align}\label{FXS-defn}
\begin{aligned}
    \xymatrix{
    \mathscr{F}(X_S) \ar[d] \ar[r] & H^1(G_K,M_1^\vee)\times H^1(G_K,M_2) \ar[d] \\
    \mathrm{BC}(\partial X_S) \ar[r] & \displaystyle\prod_{x\in X^\mathrm{cl}}\mathscr{F}_x\rlap{\ .}
    }
\end{aligned}
\end{align}

As a sanity check, we consider the case $S=\emptyset$ which gives
\begin{align*}
    \mathrm{BC}(\partial X)=\prod_{x\in X\setminus Y}\{0\}\times\prod_{y\in Y^\mathrm{cl}}\mathscr{F}_y^\mathrm{nr}\rlap{\ .}
\end{align*}
\begin{lem}\label{FX} We have a canonical map
\begin{align}\label{FX-to-D1D1}
    \xymatrix{\mathscr{F}(X) \ar[r] & D_\mathrm{fppf}^1(Y,\mathcal{M}_1^\vee)\times D_\mathrm{fppf}^1(Y,\mathcal{M}_2)\rlap{\ .}}
\end{align}
This map becomes an isomorphism if $\pi(M^{I_{K,y}})=M_2^{I_{K,y}}$ for every $y\in Y^\mathrm{cl}$. In particular, if the order of $\mathcal{M}$ is invertible on $Y$, then the canonical map becomes an isomorphism.
\end{lem}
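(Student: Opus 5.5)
We will construct \eqref{FX-to-D1D1} one factor at a time; the plan has three steps.

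\textbf{Step 1: fppf cohomology over $Y$ as unramified classes.} The classical description to use is the following. For a nice $\mathcal{N}$ over $Y$, the group $H^1_\mathrm{fppf}(Y,\mathcal{N})$ injects into $H^1(G_K,N)$, where $N=\mathcal{N}(K^\mathrm{sep})$. Its image is the set of classes whose restriction at each $y\in Y^\mathrm{cl}$ lies in
\begin{align*}
    \im\left(H^1_\mathrm{fppf}(\mathcal{O}_y,\mathcal{N})\rightarrow H^1(G_{K,y},N)\right)\rlap{\ .}
\end{align*}
This follows from the localization sequence for $H^1$ of a finite flat group scheme over a Dedekind scheme, together with the comparison results of Appendix \ref{fppf-comparison}.

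Next, $D^1_\mathrm{fppf}(Y,\mathcal{N})$ is the image of $H^1_{\mathrm{fppf},c}$. By the defining cone \eqref{defn-Hc}, it is the kernel of
\begin{align*}
    H^1_\mathrm{fppf}(Y,\mathcal{N})\rightarrow\bigoplus_{x\in X\setminus Y}H^1(K_x,\mathcal{N})\rlap{\ .}
\end{align*}
Hence $D^1_\mathrm{fppf}(Y,\mathcal{N})$ consists of the global classes that are locally trivial at $x\in X\setminus Y$ and locally ``integral'' at $y\in Y^\mathrm{cl}$. This matches the shape of the pullback \eqref{FXS-defn} with $S=\emptyset$: the factor $\prod_{x\in X\setminus Y}\{0\}$ gives the first condition, and $\prod_{y}\mathscr{F}_y^\mathrm{nr}$ gives the second.

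\textbf{Step 2: comparing the local conditions at $y\in Y^\mathrm{cl}$.} Since $\mathcal{N}$ is finite flat over the henselian DVR $\mathcal{O}_y$, we have $H^1_\mathrm{fppf}(\mathcal{O}_y,\mathcal{N})\cong H^1_\mathrm{fppf}(\kappa(y),\mathcal{N})$. When $\mathcal{N}$ is étale, this equals $H^1(G^\mathrm{nr}_{K,y},N^{I_{K,y}})$.

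For $\mathcal{M}_1^\vee$ and $\mathcal{M}_2$ the task is to show
\begin{align*}
    \im(\mathscr{F}_y^\mathrm{nr}\rightarrow\mathscr{F}_y)\subseteq\im\left(H^1_\mathrm{fppf}(\mathcal{O}_y,-)\right)\rlap{\ .}
\end{align*}
I will use the factorization $\pi(M^{I_{K,y}})\subseteq M_2^{I_{K,y}}$ together with the map \eqref{Fxnr-to-Fx}. Concretely, an unramified cocycle valued in $\mathcal{M}(\mathcal{O}_y^\mathrm{nr})$, pushed forward by $\pi$, defines an fppf torsor of $\mathcal{M}_2$ over $\mathcal{O}_y$.

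With this inclusion, an element of $\mathscr{F}(X)$ gives a pair of global classes satisfying the conditions of Step 1, hence an element of $D^1\times D^1$. This is the map \eqref{FX-to-D1D1}. It is injective because $H^1_\mathrm{fppf}(Y,-)\hookrightarrow H^1(G_K,-)$.

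\textbf{Step 3: surjectivity.} Surjectivity requires equality of the local images at every $y\in Y^\mathrm{cl}$. I expect this to be the main obstacle, specifically the identification of the flat local conditions at primes $y$ dividing the order of $\mathcal{M}$. There $\mathcal{M}_1^\vee$ or $\mathcal{M}_2$ may fail to be étale, and the image of $H^1_\mathrm{fppf}(\mathcal{O}_y,-)$ need not be the naive unramified subgroup.

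Under the hypothesis $\pi(M^{I_{K,y}})=M_2^{I_{K,y}}$, the first map in \eqref{Fxnr-to-Fx} is an isomorphism. It then remains to show that $H^1_\mathrm{fppf}(\mathcal{O}_y,\mathcal{N})$ maps onto the inflated unramified classes for both $\mathcal{N}=\mathcal{M}_1^\vee$ and $\mathcal{N}=\mathcal{M}_2$. For this I would first try comparing $H^1_\mathrm{fppf}(\mathcal{O}_y,\mathcal{N})$ with $H^1(G^\mathrm{nr},\mathcal{N}(\mathcal{O}_y^\mathrm{nr}))$ via the étale--fppf comparison of the Appendix.

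In the final case, where the order of $\mathcal{M}$ is invertible on $Y$, everything is étale over $Y$. Then $\mathcal{M}(\mathcal{O}_y^\mathrm{nr})=M^{I_{K,y}}$, and Hensel's lemma makes $\mathcal{M}\rightarrow\mathcal{M}_2$ surjective on $\mathcal{O}_y^\mathrm{nr}$-points. This gives $\pi(M^{I_{K,y}})=M_2^{I_{K,y}}$, so the hypothesis of the previous case holds and the map is an isomorphism.
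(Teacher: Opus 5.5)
Your Steps 1 and 2 follow the paper's argument. The paper also describes $D^1_{\mathrm{fppf}}(Y,-)$ as the pullback of the local conditions $\prod_{x\in X\setminus Y}\{0\}\times\prod_{y\in Y^{\mathrm{cl}}}H^1_{\mathrm{fppf}}(\mathcal{O}_y,-)$, and obtains the map from \eqref{BC-to-WW}. Your construction of the map, its injectivity, and the case where the order of $\mathcal{M}$ is invertible on $Y$ are correct.

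One correction: $H^1_{\mathrm{fppf}}(\mathcal{O}_y,\mathcal{N})\cong H^1_{\mathrm{fppf}}(\kappa(y),\mathcal{N})$ holds for smooth (here: \'etale) $\mathcal{N}$, not for arbitrary finite flat $\mathcal{N}$. For $p$ odd, $H^1_{\mathrm{fppf}}(\mathbb{Z}_p,\mu_p)=\mathbb{Z}_p^\times/\mathbb{Z}_p^{\times p}\neq0=H^1_{\mathrm{fppf}}(\mathbb{F}_p,\mu_p)$. You only use it in the \'etale case, so state it only there.

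The genuine gap is the one you flagged in Step 3. For the general isomorphism claim you need the image of $H^1_{\mathrm{fppf}}(\mathcal{O}_y,\mathcal{N})$ in $H^1(G_{K,y},N)$ to be \emph{contained in} the inflated unramified classes; the reverse inclusion is your Step 2. The route you propose cannot give this.

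The paper's proof takes exactly that route. It asserts that \eqref{BC-to-WW} becomes an isomorphism, relying on the Appendix's identification of $H^1_{\mathrm{fppf}}(\mathcal{O}_y,\mathcal{N})$ with $H^1(G_{K,y}^{\mathrm{nr}},\mathcal{N}(\mathcal{O}_y^{\mathrm{nr}}))$ via \eqref{unram-coh} and Lemma~\ref{localcomparison}(2). That lemma is only valid for \'etale $\mathcal{N}$. Its five-lemma argument needs $0\to\mathcal{N}\to\mathcal{A}\to\mathcal{B}\to0$ to be exact on the \'etale site, which fails when $\mathcal{N}$ is not smooth. In that case $H^1_{\mathrm{\acute{e}t}}(\mathcal{O}_y,\mathcal{N})\to H^1_{\mathrm{fppf}}(\mathcal{O}_y,\mathcal{N})$ is injective but in general not surjective.

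In fact the assertion itself fails without further hypotheses. Take $K=\mathbb{Q}(\zeta_p)$ with $p$ odd, $Y=X$, $\mathcal{M}_1=0$ and $\mathcal{M}=\mathcal{M}_2=\mu_p$.
\begin{itemize}
\item The condition $\pi(M^{I_{K,y}})=M_2^{I_{K,y}}$ holds trivially.
\item $D^1_{\mathrm{fppf}}(X,\mu_p)=H^1_{\mathrm{fppf}}(X,\mu_p)$ contains the Kummer class of the unit $\zeta_p$, which is nonzero in $K^\times/K^{\times p}$.
\item This class is not in the image of $\mathscr{F}(X)$, since $K(\zeta_{p^2})/K$ is ramified at $(1-\zeta_p)$.
\end{itemize}
So your suspicion is correct, and Step 3 cannot be completed as stated.

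What your Steps 1--2 do prove is the isomorphism under an additional hypothesis: $\mathcal{M}_1^\vee$ and $\mathcal{M}_2$ are \'etale over every $\mathcal{O}_y$ with $y\in Y^{\mathrm{cl}}$, together with $\pi(M^{I_{K,y}})=M_2^{I_{K,y}}$. This covers the ``in particular'' case.
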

\begin{proof} Let $\mathcal{G}$ be a nice commutative group scheme over $Y$. By \cite[Proposition 2.2]{Kes17}, we have pullback squares:
\begin{align*}
\begin{aligned}
    \xymatrix{
    H_\mathrm{fppf}^1(Y,\mathcal{G}) \ar[d] \ar@{^(->}[r] & H_\mathrm{fppf}^1(K,\mathcal{G}) \ar[d] \\
    \displaystyle\prod_{x\in X\setminus Y}H_\mathrm{fppf}^1(K_x,\mathcal{G})\times\prod_{y\in Y^\mathrm{cl}}H_\mathrm{fppf}^1(\mathcal{O}_u,\mathcal{G}) \ar[d] \ar@{^(->}[r] & \displaystyle\prod_{x\in X^\mathrm{cl}}H_\mathrm{fppf}^1(K_x,\mathcal{G}) \ar[d]^-{\mathrm{pr}_{Y^\mathrm{cl}}} \\
    \displaystyle\prod_{y\in Y^\mathrm{cl}}H_\mathrm{fppf}^1(\mathcal{O}_y,\mathcal{G}) \ar@{^(->}[r] & \displaystyle\prod_{y\in Y^\mathrm{cl}}H_\mathrm{fppf}^1(K_y,\mathcal{G})
}
\end{aligned}
\end{align*}
where $\mathrm{pr}_{Y^\mathrm{cl}}$ discards the $X\setminus U$ components. Recall that we have an exact sequence:
\begin{align*}
    \xymatrix{0 \ar[r] & D_\mathrm{fppf}^1(Y,\mathcal{G}) \ar[r] & H_\mathrm{fppf}^1(Y,\mathcal{G}) \ar[r] & \displaystyle\bigoplus_{x\in X\setminus Y}H_\mathrm{fppf}^1(K_x,\mathcal{G})\rlap{\ .}}
\end{align*}
Since $X\setminus Y$ is finite, if we choose
\begin{align*}
    \mathcal{W}(Y,\mathcal{G}):=\prod_{x\in X\setminus Y}\{0\}\times\prod_{y\in Y^\mathrm{cl}}H_\mathrm{fppf}^1(\mathcal{O}_y,\mathcal{G})\subseteq\prod_{x\in X^\mathrm{cl}}H_\mathrm{fppf}^1(K_x,\mathcal{G})
\end{align*}
then the above exact sequence fits into the following pullback squares:
\begin{align*}
    \xymatrixcolsep{1pc}\xymatrix{
    D_\mathrm{fppf}^1(Y,\mathcal{G}) \ar[d] \ar@{^(->}[r] & H_\mathrm{fppf}^1(Y,\mathcal{G}) \ar[d] \ar@{^(->}[r] & H_\mathrm{fppf}^1(K,\mathcal{G}) \ar[d] \\
    \mathcal{W}(Y,\mathcal{G}) \ar[d] \ar@{^(->}[r] & \displaystyle\prod_{x\in X\setminus Y}H_\mathrm{fppf}^1(K_x,\mathcal{G})\times\prod_{y\in Y^\mathrm{cl}}H_\mathrm{fppf}^1(\mathcal{O}_y,\mathcal{G}) \ar[d] \ar@{^(->}[r] & \displaystyle\prod_{x\in X^\mathrm{cl}}H_\mathrm{fppf}^1(K_x,\mathcal{G}) \\
    0 \ar@{^(->}[r] & \displaystyle\prod_{y\in X\setminus Y}H_\mathrm{fppf}^1(K_x,\mathcal{G})\rlap{\ .} &
}
\end{align*}
Now return to the input data. With the above notation, we have a canonical map
\begin{align}\label{BC-to-WW}
    \xymatrix{\mathrm{BC}(\partial X) \ar[r] & \mathcal{W}(Y,\mathcal{M}_1^\vee)\times\mathcal{W}(Y,\mathcal{M}_2)}
\end{align}
which induces the asserted map by taking the pullbacks: 
\begin{align*}
    \xymatrixcolsep{1.5pc}\xymatrix{
    \mathscr{F}(X) \ar[d] \ar[r] & D_\mathrm{fppf}^1(Y,\mathcal{M}_1^\vee)\times D_\mathrm{fppf}^1(Y,\mathcal{M}_2) \ar[d] \ar@{^(->}[r] & H_\mathrm{fppf}^1(K,\mathcal{M}_1^\vee)\times H_\mathrm{fppf}^1(K,\mathcal{M}_2) \ar[d] \\
    \mathrm{BC}(\partial X) \ar[r]^-{\eqref{BC-to-WW}} & \mathcal{W}(Y,\mathcal{M}_1^\vee)\times\mathcal{W}(Y,\mathcal{M}_2) \ar@{^(->}[r] & \displaystyle\prod_{x\in X^\mathrm{cl}}\mathscr{F}_x\rlap{\ .}
    }
\end{align*}
If $\pi(M^{I_{K,y}})=M_2^{I_{K,y}}$ for every $y\in Y^\mathrm{cl}$, then \eqref{BC-to-WW} becomes an isomorphism and so does the induced map.
\end{proof}
\begin{rmk}\label{D-res-inj} Let $\mathcal{G}$ be a nice commutative group scheme over $Y$. By the proof of Lemma \ref{FX}, for open subsets $U\subseteq V\subseteq Y$ the restriction map of $H_\mathrm{fppf}^1(-,\mathcal{G})$ fits into the following pullback square:
\begin{align*}
    \xymatrix{
    H_\mathrm{fppf}^1(V,\mathcal{G}) \ar[d] \ar@{^(->}[r] & H_\mathrm{fppf}^1(U,\mathcal{G}) \ar[d] \\
    \displaystyle\prod_{x\in X\setminus V}H_\mathrm{fppf}^1(K_x,\mathcal{G})\times\prod_{v\in V\setminus U}H_\mathrm{fppf}^1(\mathcal{O}_v,\mathcal{G}) \ar@{^(->}[r] & \displaystyle\prod_{x\in X\setminus U}H_\mathrm{fppf}^1(K_x,\mathcal{G})
    }
\end{align*}
On the other hand, by the naturality of the inclusion $D_\mathrm{fppf}^1(-,\mathcal{G})\subseteq H_\mathrm{fppf}^1(-,\mathcal{G})$, we have the following commutative square:
\begin{align*}
    \xymatrix{
    D_\mathrm{fppf}^1(V,\mathcal{G}) \ar@{^(->}[d] \ar[r] & D_\mathrm{fppf}^1(U,\mathcal{G}) \ar@{^(->}[d] \\
    H_\mathrm{fppf}^1(V,\mathcal{G}) \ar@{^(->}[r] & H_\mathrm{fppf}^1(U,\mathcal{G})\rlap{\ .}
    }
\end{align*}
This shows that the restriction maps of $D_\mathrm{fppf}^1(-,\mathcal{G})$ along the inclusions $U\subseteq V$ of open subsets of $Y$ are injective.
\end{rmk}

\subsection{Global arithmetic BF functionals}\label{Global-BF}

For each finite subset $S\subseteq X^\mathrm{cl}$, we have a pullback torsor:
\begin{align*}
    \xymatrix{
    \partial_S^\ast\mathscr{L}_S \ar[d] \ar[r] & \mathscr{L}_S \ar[d]^-{\varpi_S} \\
    \mathscr{F}(X_S) \ar[r]_-{\partial_S} & \mathscr{F}_S\rlap{\ .}
    }
\end{align*}
We will construct a map $\mathrm{BF}_{X_S}:\mathscr{F}(X_S)\rightarrow\mathscr{L}_S$ such that $\varpi_S\circ\mathrm{BF}_{X_S}=\partial_S$, or equivalently a section $\mathrm{BF}_{X_S}:\mathscr{F}(X_S)\rightarrow\partial_S^\ast\mathscr{L}_S$ of the pullback torsor.

If $S=\emptyset$, then from Lemma \ref{FX} we have the following composite:
\begin{align*}
    \xymatrix{\mathrm{BF}_X:\mathscr{F}(X) \ar[r]^-{\eqref{FX-to-D1D1}} & D_\mathrm{fppf}^1(Y,\mathcal{M}_1^\vee)\times D_\mathrm{fppf}^1(Y,\mathcal{M}_2) \ar[r]^-{\eqref{D1D1-pairing}} & \mathbb{Q}/\mathbb{Z}\rlap{\ .}}
\end{align*}
Since $\partial_\emptyset^\ast\mathscr{L}_\emptyset\cong\mathscr{F}(X)\times\mathbb{Q}/\mathbb{Z}$, the above $\mathrm{BF}_X$ correspnds to
\begin{align*}
    \xymatrix{(\mathrm{Id}_{\mathscr{F}(X)},\mathrm{BF}_X):\mathscr{F}(X) \ar[r] & \mathscr{F}(X)\times\mathbb{Q}/\mathbb{Z}}
\end{align*}

If $S\neq\emptyset$, then the procedure is analogous to what we did in Section \ref{Localthy-BFnr}. We first introduce notations.

\begin{notation}\label{bdyS} For each $x\in X^\mathrm{cl}$, we have an embedding $G_{K,x}\hookrightarrow G_K$ induced from the chosen embedding $K^\mathrm{sep}\hookrightarrow K_x^\mathrm{sep}$. Then denote
\begin{align*}
    \xymatrix{\partial_x:=(G_{K,x}\hookrightarrow G_K)^\ast:C^\bullet(G_K,-) \ar[r] & C^\bullet(G_{K,x},-)\rlap{\ .}}
\end{align*}
Analogously, given a nonempty finite subset $S\subseteq X^\mathrm{cl}$, we denote
\begin{align*}
    \xymatrix{\partial_S:=(\partial_x)_{x\in S}:C^\bullet(G_K,-) \ar[r] & \displaystyle\prod_{x\in S}C^\bullet(G_{K,x},-)\rlap{\ .}}
\end{align*}
We use the same notations for the induced maps on cohomology groups. For example, applying this to \eqref{BF-input-GK} we get
\begin{align*}
    \xymatrix{\partial_S:H^1(G_K,M_1^\vee)\times H^1(G_K,M_2) \ar[r] & \mathscr{F}_S\rlap{\ .}}
\end{align*}
We use the same notation for its restriction to subgroups or composition with \eqref{FXS-defn}. Then each $\partial_x:\mathscr{F}(X_S)\rightarrow\mathscr{F}_x$ admits a factorization:
\begin{align*}
    \xymatrix{
    \mathscr{F}(X_S) \ar[d]_-{\eqref{FXS-defn}} \ar[r]^-{\partial_x} & \mathscr{F}_x \\
    \mathrm{BC}(\partial X_S) \ar[r]_-{\mathrm{pr}_x} & \mathrm{BC}(\partial X_S)_x\rlap{\ .} \ar[u]
    }
\end{align*}
If $x\in X_S^\mathrm{cl}$, then $\mathrm{BC}(\partial X_S)_x\subseteq\mathscr{F}_x$ by construction. In this case, we denote
\begin{align*}
    \xymatrix{\partial_x^\mathrm{nr}:\mathscr{F}(X_S) \ar[r]^-{\eqref{FXS-defn}} & \mathrm{BC}(\partial X_S) \ar[r]^-{\mathrm{pr}_x} & \mathrm{BC}(\partial X_S)_x\rlap{\ .}}
\end{align*}
Analogously, given a nonempty finite subset $\Lambda\subseteq X_S^\mathrm{cl}$, we denote
\begin{align*}
    \xymatrix{\partial_\Lambda^\mathrm{nr}:=(\partial_x^\mathrm{nr})_{x\in\Lambda}:\mathscr{F}(X_S) \ar[r] & \mathrm{BC}(\partial X_S)_\Lambda\rlap{\.}}
\end{align*}
\end{notation}

\begin{rmk}\label{FBC-pullback} Let $S\subseteq T\subseteq X^\mathrm{cl}$ be finite subsets. In this case, we may rewrite
\begin{align*}
    \mathrm{BC}(\partial X_S)&=\prod_{x\in X_S\setminus Y}\{0\}\times\prod_{x\in S}\mathrm{BC}_x\times\prod_{y\in Y_S^\mathrm{cl}}\mathscr{F}_y^\mathrm{nr}\\
    &=\prod_{x\in X_T\setminus Y}\{0\}\times\prod_{x\in(T\setminus S)\setminus Y}\{0\}\times\prod_{x\in S}\mathrm{BC}_x\times\prod_{y\in(T\setminus S)\cap Y}\mathscr{F}_y^\mathrm{nr}\times\prod_{y\in Y_T^\mathrm{cl}}\mathscr{F}_y^\mathrm{nr}
\end{align*}
so we have a canonical map $\mathrm{BC}(\partial X_S)\rightarrow\mathrm{BC}(\partial X_T)$ by our assumption on $\mathrm{BC}$. Then we have an induced map $\mathscr{F}(X_S)\rightarrow\mathscr{F}(X_T)$ which will be our restriction map. More precisely, we have the following commutative cube:
\begin{align*}
    \xymatrix@!0@R=3.5pc@C=5pc{
    & \mathrm{BC}(\partial X_S) \ar[dd]|\hole^(.25){\mathrm{pr}_T} \ar[rr] & & \mathrm{BC}(\partial X_T) \ar[dd]^(.25){\mathrm{pr}_T} \\
    \mathscr{F}(X_S) \ar[dd]_(.25){(\partial_S,\partial_{T\setminus S}^\mathrm{nr})} \ar[rr] \ar[ur] & & \mathscr{F}(X_T) \ar[dd]^(.25){\partial_T} \ar[ur] & \\
    & \mathrm{BC}_S\times\mathrm{BC}(\partial X_S)_{T\setminus S} \ar@{^(->}[dl] \ar[rr]|\hole & & \mathrm{BC}_T \ar@{^(->}[dl] \\
    \mathscr{F}_S\times\mathrm{BC}(\partial X_S)_{T\setminus S} \ar[rr] & & \mathscr{F}_T &
    }
\end{align*}
whose top, back, and bottom faces are pullback squares by definition. Hence the front face becomes a pullback square.
\end{rmk}

Given $\rho=(\rho_1,\rho_2)\in\mathscr{F}(X_S)$, choose a representative of its image under \eqref{FXS-defn}:
\begin{align*}
    (\alpha_1,\alpha_2)\in Z^1(G_K,M_1^\vee)\times Z^1(G_K,M_2)
\end{align*}
and a set-theoretic section $\sigma:M_2\rightarrow M$ of $\pi$, i.e., $\sigma\in\mathscr{R}$. By \cite[Corollary I.4.18]{Mil06}, for any positive prime $\ell$, the $\ell$-primary subgroup of $H^3(G_K,K^{\mathrm{sep},\times})$ vanishes:
\begin{align*}
    H^3(G_K,K^{\mathrm{sep},\times})(\ell)=0\rlap{\ .}
\end{align*}
Since $M$ is finite, $\alpha_1\cup d(\sigma\circ\alpha_2)$ is torsion so we have
\begin{align*}
    d\varphi=\alpha_1\cup d(\sigma\circ\alpha_2)\quad\textrm{for some}\quad\varphi\in\frac{C^2(G_K,K^{\mathrm{sep},\times})}{B^2(G_K,K^{\mathrm{sep},\times})}\rlap{\ .}
\end{align*}
Given another choice $(\beta_1,\beta_2)$ and $\tau\in\mathscr{R}$, we have
\begin{align*}
    \beta_i=\alpha_i+dg_i\quad\textrm{for some}\quad(g_1,g_2)\in C^0(G_K,M_1^\vee)\times C^0(G_K,M_2)
\end{align*}
Then, for each $x\in X^\mathrm{cl}$, we have
\begin{align*}
    d(\partial_x\varphi+\Theta_x(\tau-\sigma,\partial_xg_1,\partial_xg_2))=\partial_x(\beta_1\cup d(\tau\circ\beta_2))
\end{align*}
analogously as in the construction of \eqref{FVS-BF-functional}. Note that the $\Theta_x$ defined in Lemma \ref{cup-functor}, and $\partial_x$ is defined in Notation \ref{bdyS}. This shows that
\begin{align}\label{FUS-BF-functional}
    \xymatrix{\mathrm{BF}_{X_S}:\mathscr{F}(X_S) \ar[r] & \mathscr{L}_S & \rho \ar@{|->}[r] & [(\partial_x\varphi\bmod\Theta_x)_{x\in S}]}
\end{align}
is a well-defined map. By construction, we have
\begin{align*}
    ([d(\partial_x\varphi)])_{x\in S}=([\partial_x\alpha_1\cup d(\sigma\circ\partial_x\alpha_2)])_{x\in S}=((\mathbf{d}_\ast\Theta_x)(\partial_x\rho))_{x\in S}
\end{align*}
which shows that $\mathrm{BF}_{X_S}$ defines the desired section.

Given $\xi\in\Gamma(\mathscr{F}_S,\mathscr{L}_S)$, we have a trivialization of the $\mathbb{Q}/\mathbb{Z}$-torsor:
\begin{align}\label{scrLS-trivialization}
\begin{aligned}
    &\xymatrix{\Phi_S^\xi:\mathscr{L}_S \ar[r]^-\sim & \mathscr{F}_S\times\mathbb{Q}/\mathbb{Z}} \\
    &\Phi_S^\xi[(\lambda_x)_{x\in S}]:=\left((\varpi_x(\lambda_x))_{x\in S},\sum_{x\in S}\mathrm{inv}_x(\lambda_x-(\xi_x\circ\varpi_x)(\lambda_x))\right)
\end{aligned}
\end{align}
which induces the following trivializations of the pullback torsors:
\begin{align*}
    \xymatrix{\partial_S^\ast\Phi_S^\xi:\partial_S^\ast\mathscr{L}_S \ar[r]^-\sim & \mathscr{F}_S\times\mathbb{Q}/\mathbb{Z} & \Phi_S^{\xi,\mathrm{nr}}:\mathscr{L}_S^\mathrm{nr} \ar[r]^-\sim & \mathscr{F}_S^\mathrm{nr}\times\mathbb{Q}/\mathbb{Z}\rlap{\ .}}
\end{align*}
Each of the above isomorphisms induces a group isomorphism of the form:
\begin{align*}
    \xymatrix{\left(\mathrm{pr}_{\mathbb{Q}/\mathbb{Z}}\circ\Phi^\xi\right)_\ast:\Gamma(\mathscr{F},\mathscr{L}) \ar[r]^-\sim & \mathrm{Set}(\mathscr{F},\mathbb{Q}/\mathbb{Z})}
\end{align*}
for each of the following triples:
\begin{align*}
    \left(\Phi^\xi,\mathscr{F},\mathscr{L}\right)\in\left\{\left(\Phi_S^\xi,\mathscr{F}_S,\mathscr{L}_S\right),\left(\partial_S^\ast\Phi_S^\xi,\mathscr{F}(X_S),\partial_S^\ast\mathscr{L}_S\right),\left(\Phi_S^{\xi,\mathrm{nr}},\mathscr{F}_S^\mathrm{nr},\mathscr{L}_S^\mathrm{nr}\right)\right\}\rlap{\ .}
\end{align*}
Along the above isomorphism, we get functions
\begin{align}\label{secBF}
    \mathrm{BF}_{X_S}^\xi:=\mathrm{pr}_{\mathbb{Q}/\mathbb{Z}}\circ\partial_S^\ast\Phi_S^\xi\circ\mathrm{BF}_{X_S}\in\mathrm{Set}(\mathscr{F}(X_S),\mathbb{Q}/\mathbb{Z})
\end{align}
\begin{align*}
    \mathrm{BF}_S^{\mathrm{nr},\xi}:=\mathrm{pr}_{\mathbb{Q}/\mathbb{Z}}\circ\Phi_S^\xi|_{\mathscr{F}_S^\mathrm{nr}}\circ\mathrm{BF}_S^\mathrm{nr}\in\mathrm{Set}(\mathscr{F}_S^\mathrm{nr},\mathbb{Q}/\mathbb{Z})
\end{align*}
corresponding to the sections defined above.

\begin{notation}\label{Bdy-reversed} Denote $\varpi_{S^\ast}:\mathscr{L}_{S^\ast}\rightarrow\mathscr{F}_S$ the $\mathbb{Q}/\mathbb{Z}$-torsor over $\mathscr{F}_S$ constructed as in Section \ref{Classical-BF} but using the following isomorphism:
\begin{align*}
    \xymatrix{-\mathrm{inv}_x:H^2(G_{K,X},K_x^{\mathrm{sep},\times}) \ar[r]^-\sim & \mathbb{Q}/\mathbb{Z}}
\end{align*}
so that $\mathscr{L}_{S^\ast}$ has the same underlying set as $\mathscr{L}_S$ but the $\mathbb{Q}/\mathbb{Z}$-action is twisted by $-1:\mathbb{Q}/\mathbb{Z}\rightarrow\mathbb{Q}/\mathbb{Z}$. Also, for each $\xi\in\Gamma(\mathscr{F}_S,\mathscr{L}_S)$, the trivialization map becomes
\begin{align*}
    \xymatrix{\Phi_{S^\ast}^\xi=-\Phi_S^\xi:\mathscr{L}_{S^\ast} \ar[r]^-\sim & \mathscr{F}_S\times\mathbb{Q}/\mathbb{Z}\rlap{\ .}}
\end{align*}
This gives the following relations
\begin{align*}
    \mathrm{BF}_{X_{S^\ast}}^\xi=-\mathrm{BF}_{X_S}^\xi,\quad\mathrm{BF}_{S^\ast}^{\mathrm{nr},\xi}=-\mathrm{BF}_S^{\mathrm{nr},\xi}\rlap{\ .}
\end{align*}
\end{notation}

\subsection{Decomposition formula} Given finite subsets $S\subseteq T\subseteq X^\mathrm{cl}$, denote
\begin{align*}
    \xymatrix{\oplus:\mathscr{F}_S\times\mathscr{F}_{T\setminus S} \ar[r]^-\sim & \mathscr{F}_T}
\end{align*}
the canonical isomorphism which is characterized by the property
\begin{align*}
    \beta=\mathrm{pr}_S\beta\oplus\mathrm{pr}_{T\setminus S}\beta\quad\textrm{for}\quad\beta\in\mathscr{F}_T\rlap{\ .}
\end{align*}

Assume first that $S\neq\emptyset$. For each $\beta\in\mathscr{F}_T$, the canonical bijection as above:
\begin{align*}
    \xymatrix{\displaystyle\oplus:\prod_{x\in S}\mathscr{L}_x(\beta_x)\times\prod_{x\in T\setminus S}\mathscr{L}_x(\beta_x) \ar[r]^-\sim & \displaystyle\prod_{x\in T}\mathscr{L}_x(\beta_x)}
\end{align*}
induces a $\mathbb{Q}/\mathbb{Z}$-bi-equivariant quotient map:
\begin{align*}
    \xymatrix{\oplus:\mathscr{L}_S(\mathrm{pr}_S\beta)\times\mathscr{L}_{T\setminus S}(\mathrm{pr}_{T\setminus S}\beta) \ar[r] & \mathscr{L}_T(\beta)\rlap{\ .}}
\end{align*}
Gathering these maps, we get
\begin{align}\label{scrL-sum}
    \xymatrix{\oplus:\mathscr{L}_S\times\mathscr{L}_{T\setminus S} \ar[r] & \mathscr{L}_T}
\end{align}
which fits into the following commutative square:
\begin{align*}
    \xymatrix{
    \mathscr{L}_S\times\mathscr{L}_{T\setminus S} \ar[d]_-{\varpi_S\times\varpi_{T\setminus S}} \ar[r]^-\oplus & \mathscr{L}_T \ar[d]^-{\varpi_T} \\
    \mathscr{F}_S\times\mathscr{F}_{T\setminus S} \ar[r]^-\sim_-\oplus & \mathscr{F}_T\rlap{\ .}
    }
\end{align*}
Note that the composite $\mathscr{L}_S\times\mathscr{L}_{T\setminus S}\rightarrow\mathscr{F}_T$ is identified with the external product of $\varpi_S$ and $\varpi_{T\setminus S}$.

If $S=\emptyset$, then $\oplus$ is defined to be the $\mathbb{Q}/\mathbb{Z}$-action on $\mathscr{L}_T$:
\begin{align*}
    \xymatrix{\oplus:\mathscr{L}_\emptyset\times\mathscr{L}_T \ar@{=}[r] & \mathbb{Q}/\mathbb{Z}\times\mathscr{L}_T \ar[r]^-{\mathrm{action}} & \mathscr{L}_T\rlap{\ .}}
\end{align*}
Since $\varpi_T$ is $\mathbb{Q}/\mathbb{Z}$-invariant, this fits into the following commutative square:
\begin{align*}
    \xymatrix{
    \mathscr{L}_\emptyset\times\mathscr{F}_T \ar[d]_-{\varpi_\emptyset\times\mathscr{L}_T} \ar[r]^-\oplus & \mathscr{L}_T \ar[d]^-{\varpi_T} \\
    \mathscr{F}_\emptyset\times\mathscr{F}_T \ar[r]^-\sim & \mathscr{F}_T\rlap{\ .}
    }
\end{align*}

\begin{thm}[Decomposition formula]\label{Decomp-formula} Let $S\subseteq T\subseteq X^\mathrm{cl}$ be finite subsets.
\begin{quote}
    (1) If $S\neq\emptyset$, then the following equality holds in $\Gamma(\mathscr{F}(X_S),\partial_T^\ast\mathscr{L}_T)$:
    \begin{align*}
        \mathrm{BF}_{X_T}|_{\mathscr{F}(X_S)}=\mathrm{BF}_{X_S}\oplus\left(\mathrm{BF}_{T\setminus S}^\mathrm{nr}\circ\partial^\mathrm{nr}_{T\setminus S}\right)\rlap{\ .}
    \end{align*}
    (2) If $S=\emptyset$, $X\setminus Y\subseteq T$, and the order of $\mathcal{M}$ is invertible on $Y_T$, then the following equality holds in $\Gamma(\mathscr{F}(X),\partial_T^\ast\mathscr{L}_T)$:
    \begin{align*}
        \mathrm{BF}_{X_T}|_{\mathscr{F}(X)}=\mathrm{BF}_X\oplus\left(\mathrm{BF}_T^\mathrm{nr}\circ\partial_T^\mathrm{nr}\right)\rlap{\ .}
    \end{align*}
\end{quote}
\end{thm}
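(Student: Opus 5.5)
The plan for (1) is to compute both sides from one common choice of cochain data. Fix $\rho\in\mathscr{F}(X_S)$ and view it in $\mathscr{F}(X_T)$ through the restriction map of Remark \ref{FBC-pullback}. I would choose a representative $(\alpha_1,\alpha_2)\in Z^1(G_K,M_1^\vee)\times Z^1(G_K,M_2)$ and a class $\varphi\in C^2(G_K,K^{\mathrm{sep},\times})/B^2$ with $d\varphi=\alpha_1\cup d(\sigma\circ\alpha_2)$.

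Since $\mathrm{BF}_{X_T}$ is independent of all choices, the same data compute both $\mathrm{BF}_{X_T}(\rho)=[(\partial_x\varphi\bmod\Theta_x)_{x\in T}]$ and $\mathrm{BF}_{X_S}(\rho)=[(\partial_x\varphi\bmod\Theta_x)_{x\in S}]$. By the definition of \eqref{scrL-sum}, $\oplus$ simply concatenates the families indexed by $S$ and $T\setminus S$. The identity in (1) therefore reduces to a local one: for each $x\in T\setminus S$, the class $\partial_x\varphi\bmod\Theta_x$ should equal $\mathrm{BF}_x^\mathrm{nr}(\partial_x^\mathrm{nr}\rho)$ in $\mathscr{L}_x$.

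For this local claim at $x\in T\setminus S$, I would first handle the case $x\in Y$. Here $\partial_x^\mathrm{nr}\rho\in\mathscr{F}_x^\mathrm{nr}$, so I choose an unramified representative $(\alpha_{x,1}^\mathrm{nr},\alpha_{x,2}^\mathrm{nr})$ together with $\tau\in\mathscr{R}$ preserving $\pi(M^{I_{K,x}})$. There is then an arrow $(\tau-\sigma,g_1,g_2)$ in $\mathscr{R}\times\mathscr{A}_x$ from $(\sigma,\partial_x\alpha_1,\partial_x\alpha_2)$ to $(\tau,\alpha_{x,1}^\mathrm{nr},\alpha_{x,2}^\mathrm{nr})$. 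The element $\partial_x\varphi+\Theta_x(\tau-\sigma,g_1,g_2)$ and the inflated $\varphi_x^\mathrm{nr}$ both lie in the fiber $\mathscr{L}_x^\tau(\alpha^\mathrm{nr}_x)$, so they differ by a class $c\in H^2(G_{K,x},K_x^{\mathrm{sep},\times})$. I must show $\mathrm{inv}_x(c)=0$.

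This is the step I expect to be the main obstacle, because the local unramified vanishing \eqref{local-HGnr-vanishing} does not by itself force $c=0$. A global argument is needed. Consider the difference between $\mathrm{BF}_{X_T}$ and the right-hand side as a function $\mathscr{F}(X_S)\to\mathbb{Q}/\mathbb{Z}$, namely $\sum_{x\in T\setminus S}\mathrm{inv}_x(c_x)$. I would try to identify $c_x$ as the restriction of a global obstruction that is unramified at $x$. Concretely, I would compare $\varphi$ with a cochain on $G_{K,S'}$, where $S'$ is the set of bad places together with $S$. The difference at $x$ then comes from $H^2$ of $\mathcal{O}_x^{\mathrm{nr},\times}$-valued cochains, which vanishes by \eqref{local-HGnr-vanishing}.

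If that fails, I would fall back on reciprocity $\sum_{\text{all }v}\mathrm{inv}_v=0$, which comes from the fact that $H^3(G_K,K^{\mathrm{sep},\times})$ has no torsion. This would let the local corrections at places of $T\setminus S$ be absorbed into the ambiguity of $\varphi$, since $\varphi$ is only determined modulo $Z^2$ and the components at $S$ can be adjusted. That freedom in $\varphi$ is exactly what the quotient by $\mathbb{Q}/\mathbb{Z}$ in $\mathscr{L}_T$ accounts for. I would then handle the places $x\in(T\setminus S)\setminus Y$ the same way: their boundary data is $0$, so I take $\alpha^\mathrm{nr}=0$, $\varphi_x^\mathrm{nr}=0$, and the argument is identical.

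For (2), I would use the trivialization $\partial_\emptyset^\ast\mathscr{L}_\emptyset=\mathscr{F}(X)\times\mathbb{Q}/\mathbb{Z}$ and the $\mathbb{Q}/\mathbb{Z}$-action form of $\oplus$. The claim becomes that
\begin{align*}
\sum_{x\in T}\mathrm{inv}_x\bigl(\partial_x\varphi-\varphi_x^{\mathrm{nr}}\text{ (adjusted by }\Theta_x)\bigr)=\int_Y\alpha_1\cup\delta\alpha_2,
\end{align*}
where the right-hand side is the pairing \eqref{D1D1-pairing}. I would use Lemma \ref{FX}, which applies because $\mathcal{M}$ has invertible order on $Y_T$. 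With it I would realize $\alpha_2$ as a compactly supported class on $Y$, trivialized at $X\setminus Y\subseteq T$. By the cone description \eqref{defn-Hc}, the global invariant $\int_Y$ of $\alpha_1\cup\delta\alpha_2$ is computed by a global cochain $\varphi$ with $d\varphi=\alpha_1\cup d(\sigma\circ\alpha_2)$, together with local nullhomotopies at the places of $T$; the compact-support structure at $X\setminus Y$ and the unramified nullhomotopies at $T\cap Y$ supply these. The invariant is then the sum of the local invariants of the differences. Matching signs with the anticommutative square in Section 2 is routine but needs care.
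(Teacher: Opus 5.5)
Your reduction of (1) to a local statement at each $x\in T\setminus S$ is the same as the paper's. The paper then simply asserts $\partial_x\varphi\bmod\Theta_x=\varphi_x\bmod\Theta_x$ in $\mathscr{E}_x/\Theta_x$, citing the isomorphism $(\sigma,\partial_x\alpha)\cong(\sigma,\alpha_x^{\mathrm{nr}})$ in $\mathscr{R}\times\mathscr{A}_x$. You are right that this isomorphism only puts both elements into the same $\mathbb{Q}/\mathbb{Z}$-torsor $\mathscr{L}_x(\partial_x\rho)$.

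However, your proposal does not close this gap, and neither of your routes can close it as you have set things up. The reason is that the discrepancy $c_x\in\mathbb{Q}/\mathbb{Z}$ between the two elements genuinely depends on $\varphi$. Take $z\in Z^2(G_K,K^{\mathrm{sep},\times})$ representing a class of $\mathrm{Br}(K)$ with invariant $a\neq0$ at some $x_0\in T\setminus S$, invariant $-a$ at some $y\notin T$, and all other invariants $0$. Such a class exists by the reciprocity sequence for $\mathrm{Br}(K)$. Then $\varphi+z$ satisfies $d(\varphi+z)=\alpha_1\cup d(\sigma\circ\alpha_2)$ just as well, but it changes both $c_{x_0}$ and $\mathrm{BF}_{X_T}(\rho)$ by $a$. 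Taking instead $x_0\in S$ and $y\notin S$ changes $\mathrm{BF}_{X_S}(\rho)$.

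So your premise that $\mathrm{BF}_{X_T}$ is independent of all choices is false for the functional as literally defined. The well-definedness check leading to \eqref{FUS-BF-functional} handles changes of $(\alpha,\sigma)$, but never the ambiguity of $\varphi$ by $H^2(G_K,K^{\mathrm{sep},\times})$. Your fallback, absorbing the corrections into that ambiguity, contradicts the premise rather than proving the local identity. Your first route fails for the same reason: an arbitrary $\varphi$ differs from a normalized $G_{K,S'}$-cochain by a Brauer class, so the local difference at $x$ is not $\mathcal{O}_x^{\mathrm{nr},\times}$-valued. Moreover, places of $T\setminus S$ lying in $S'$ (outside $Y$, or where $M$ ramifies) are not covered at all.

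What is missing is a normalization of $\varphi$, built into the definition of $\mathrm{BF}_{X_S}$.

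\emph{Step 1.} For any $\varphi$, one has $c_x=0$ for all but finitely many $x$. Pick a representative cochain of $\varphi$. It and the cocycles $\alpha_i$ factor through $\mathrm{Gal}(L/K)$ for a finite Galois extension $L$, which is unramified at almost all $x$. The finitely many values of the cochain are units at the places above almost all $x$, and $I_{K,x}$ acts trivially on $M$ for almost all $x$. For such $x$, $\partial_x\varphi$ comes from $C^2(G_{K,x}^{\mathrm{nr}},\mathcal{O}_x^{\mathrm{nr},\times})$ and agrees with $\varphi_x^{\mathrm{nr}}$ by the uniqueness coming from \eqref{local-HGnr-vanishing}.

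\emph{Step 2.} Call $\varphi$ admissible for $X_S$ if $c_x=0$ for every $x\in X^{\mathrm{cl}}\setminus S$. Since $S\neq\emptyset$, you can add a Brauer class with invariant $-c_x$ at each of the finitely many offending $x\notin S$ and the compensating invariant at one point of $S$; this yields an admissible $\varphi$.

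\emph{Step 3.} Two admissible choices differ by a Brauer class whose invariants are supported on $S$. These invariants sum to $0$ by reciprocity, so $\mathrm{BF}_{X_S}$ computed with admissible $\varphi$ is well defined.

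With this in place, (1) is immediate. A $\varphi$ admissible for $X_S$ is admissible for $X_T$, and at $x\in T\setminus S$ admissibility is exactly the local identity you wanted.

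For (2), your sketch follows the paper's route: unramified coboundary representatives at $T$, together with the cochain formula for the pairing on $\Sha^1$. It needs the same normalization. The choice-independent quantity is the finite sum $\sum_x c_x$ over all places, and it collapses to the sum over $T$ only when $\varphi$ is admissible for $X_T$. For arbitrary $\varphi$, the expression $\mathrm{BF}_{X_T}(\rho)-\mathrm{BF}_T^{\mathrm{nr}}(\partial_T^{\mathrm{nr}}\rho)$ moves by $\sum_{x\in T}\mathrm{inv}_x(z)$, while $\mathrm{BF}_X(\rho)$ does not.
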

\begin{proof} (1) Given $\rho=(\rho_1,\rho_2)\in\mathscr{F}(X_S)$, choose a representative of its image under \eqref{FXS-defn}:
\begin{align*}
    \alpha=(\alpha_1,\alpha_2)\in Z^1(G_K,M_1^\vee)\times Z^1(G_K,M_2)
\end{align*}
For each $x\in X_S\setminus X_T=T\setminus S$, we have $\partial_x\rho\in\mathscr{F}_x^\mathrm{nr}$ or $\partial_x\rho=0$ by Remark \ref{FBC-pullback} so we may choose its unramified representative:
\begin{align*}
    \alpha_x^\mathrm{nr}=(\alpha_{x,1}^\mathrm{nr},\alpha_{x,2}^\mathrm{nr})\in Z^1(G_{K,x}^\mathrm{nr},(M_1^\vee)^{I_{K,x}})\times Z^1(G_{K,x}^\mathrm{nr},\pi(M^{I_{K,x}}))
\end{align*}
and hence $\partial_x\alpha\cong\alpha_x^\mathrm{nr}$ in $\mathscr{A}_x$. Choose $\sigma\in\mathscr{R}$ such that $\sigma(\pi(M^{I_{K,x}}))\subseteq M^{I_{K,x}}$. Then $(\sigma,\partial_x\alpha)\cong(\sigma,\alpha_x)$ in $\mathscr{R}\times\mathscr{A}_x$. Choose $\varphi$ as in \eqref{FUS-BF-functional} and $\varphi_x$ as in \eqref{FVS-BF-functional}. Then the following holds in $\mathscr{E}_x/\Theta_x$:
\begin{align*}
    \partial_x\varphi\bmod\Theta_x=\varphi_x\bmod\Theta_x\rlap{\ .}
\end{align*}
From the above observation, we get
\begin{align*}
    \mathrm{BF}_{X_T}(\rho)&=[(\partial_x\varphi\bmod\Theta_x)_{x\in S}]\oplus[(\partial_x\varphi\bmod\Theta_x)_{x\in T\setminus S}]\\
    &=\mathrm{BF}_{X_S}(\rho)\oplus[(\varphi_x\bmod\Theta_x)_{x\in T\setminus S}]\\
    &=\mathrm{BF}_{X_S}(\rho)\oplus\mathrm{BF}_{T\setminus S}^\mathrm{nr}(\partial_{T\setminus S}^\mathrm{nr}\rho)\rlap{\ .}
\end{align*}
\indent (2) In this case, \eqref{BF-input} becomes an exact sequence finite \'etale group schemes over $Y_T$. By Remark \ref{et-fppf-smqp} and \cite[Proposition II.4.13]{Mil06}, $\mathrm{BF}_X$ can be identified with the corresponding pairing of Galois cohomology groups over $Y_T$:
\begin{align*}
    \xymatrix{
    \mathrm{BF}_X:\mathscr{F}(X) \ar[r]^-{\eqref{FX-to-D1D1}} & D_\mathrm{fppf}^1(Y,\mathcal{M}_1^\vee)\times D_\mathrm{fppf}^1(Y,\mathcal{M}_2) \ar@{^(->}[d] \ar[r]^-{\eqref{D1D1-pairing}} & \mathbb{Q}/\mathbb{Z} \ar@{=}[d] \\
    & D_\mathrm{fppf}^1(Y_T,\mathcal{M}_1^\vee)\times D_\mathrm{fppf}^1(Y_T,\mathcal{M}_2) \ar[d]^-\wr \ar[r] & \mathbb{Q}/\mathbb{Z} \ar@{=}[d] \\
    & \Sha_T^1(K,M_1^\vee)\times\Sha_T^1(K,M_2) \ar[r] & \mathbb{Q}/\mathbb{Z}\rlap{\ .}
    }
\end{align*}
Given $\rho\in\mathscr{F}(X)$, we compute $\mathrm{BF}_X(\rho)$ along the bottom row. Replacing $Y$ by $Y_T$ if necessary, we may assume that $T=X\setminus Y$. In this case, we may regard $\partial_x\rho=0$ for every $x\in T$. Then we may choose its unramified representative in the coboundary subgroup:
\begin{align*}
    \alpha_x^\mathrm{nr}=(\alpha_{x,1}^\mathrm{nr},\alpha_{x,2}^\mathrm{nr})\in B^1(G_{K,x}^\mathrm{nr},(M_1^\vee)^{I_{K,x}})\times B^1(G_{K,x}^\mathrm{nr},\pi(M^{I_{K,x}}))\rlap{\ .}
\end{align*}
Choose $\mu_{2,x}\in C^0(G_{K,x}^\mathrm{nr},\pi(M^{I_{K,x}}))$ such that $\alpha_{x,2}^\mathrm{nr}=d\mu_{2,x}$. Choose $\sigma\in\mathscr{R}$ such that $\sigma(\pi(M^{I_{K,x}}))\subseteq M^{I_{K,x}}$. Since $\pi$ is $G_K$-linear, it commutes with the differentials on $C^\bullet(G_{K,x}^\mathrm{nr},-)$ so we have
\begin{align*}
    \pi d(\sigma\circ\mu_{2,x})=d(\pi\circ\sigma\circ\mu_{2,x})=d\mu_{2,x}=\alpha_{2,x}^\mathrm{nr}\rlap{\ .}
\end{align*}
This implies that
\begin{align*}
    d(\sigma\circ\mu_{2,x})-\sigma\circ\alpha_{2,x}^\mathrm{nr}\in C^1(G_{K,x}^\mathrm{nr},M_1^{I_{K,x}})\rlap{\ .}
\end{align*}
Now the cochain
\begin{align*}
    \alpha_{x,1}^\mathrm{nr}\cup(d(\sigma\circ\mu_{2,x})-\sigma\circ\alpha_{2,x}^\mathrm{nr})\in C^2(G_{K,x},\mathcal{O}_{K,x}^{\mathrm{nr},\times})
\end{align*}
satisfies
\begin{align*}
    d\left(\alpha_{x,1}^\mathrm{nr}\cup(d(\sigma\circ\mu_{2,x})-\sigma\circ\alpha_{2,x}^\mathrm{nr})\right)=\alpha_{x,1}^\mathrm{nr}\cup d(\sigma\circ\alpha_{2,x}^\mathrm{nr})\rlap{\ .}
\end{align*}
By the uniqueness coming from \eqref{local-HGnr-vanishing} for $q=2$, we have
\begin{align*}
    \varphi_x^\mathrm{nr}=\alpha_{x,1}^\mathrm{nr}\cup(d(\sigma\circ\mu_{2,x})-\sigma\circ\alpha_{2,x}^\mathrm{nr})\bmod B^2(G_{K,x},\mathcal{O}_{K,x}^{\mathrm{nr},\times})\rlap{\ .}
\end{align*}
Then we have (cf. \cite[p.65]{Mil06}\footnote{There are mistakes on the sign. Here we correct the sign.}) the following equality in $\mathbb{Q}/\mathbb{Z}$:
\begin{align*}
    \mathrm{BF}_X(\rho)=\sum_{x\in T}\mathrm{inv}_x(\partial_x\varphi-\varphi_x)=\mathrm{BF}_{X_T}(\rho)-\mathrm{BF}_T^\mathrm{nr}(\partial^\mathrm{nr}_T\rho)\rlap{\ .}
\end{align*}
By definition of $\oplus$ for $S=\emptyset$ case, this is exactly what we have to prove.
\end{proof}

Once we choose sections properly, $\oplus$ in the above decomposition formula becomes the sum of sections. For this we introduce the following notation: Given finite subsets $S\subseteq T\subseteq X^\mathrm{cl}$, denote
\begin{align*}
    \xymatrixcolsep{0.9pc}\xymatrix{\boxplus:\Gamma(\mathscr{F}_S,\mathscr{L}_S)\times\Gamma(\mathscr{F}_{T\setminus S},\mathscr{L}_{T\setminus S}) \ar[r] & \Gamma(\mathscr{F}_T,\mathscr{L}_T) & (\alpha,\gamma) \ar@{|->}[r] & (\alpha\circ\mathrm{pr}_S)\oplus(\gamma\circ\mathrm{pr}_{T\setminus S})}
\end{align*}
where $\oplus$ is as in \eqref{scrL-sum}.

\begin{prop}\label{Decomp-formula-with-section} Let $S\subseteq T\subseteq X^\mathrm{cl}$ be finite subsets. Given
\begin{align*}
    (\xi_S,\xi_{T\setminus S})\in\Gamma(\mathscr{F}_S,\mathscr{L}_S)\times\Gamma(\mathscr{F}_{T\setminus S},\mathscr{L}_{T\setminus S})
\end{align*}
the following holds.
\begin{quote}
    (1) If $S\neq\emptyset$, then the following equality holds in $\mathrm{Set}(\mathscr{F}(X_S),\mathbb{Q}/\mathbb{Z})$:
    \begin{align*}
        \mathrm{BF}_{X_T}^{\xi_S\boxplus\xi_{T\setminus S}}|_{\mathscr{F}(X_S)}=\mathrm{BF}_{X_S}^{\xi_S}+\left(\mathrm{BF}_{T\setminus S}^{\mathrm{nr},\xi_{T\setminus S}}\circ\partial^\mathrm{nr}_{T\setminus S}\right)\rlap{\ .}
    \end{align*}
    (2) If $S=\emptyset$, $X\setminus Y\subseteq T$, and the order of $\mathcal{M}$ is invertible over $Y_T$, then the following equality holds in $\mathrm{Set}(\mathscr{F}(X),\mathbb{Q}/\mathbb{Z})$:
    \begin{align*}
        \mathrm{BF}_{X_T}^{\xi_T}|_{\mathscr{F}(X_S)}=\mathrm{BF}_X+\left(\mathrm{BF}_T^{\mathrm{nr},\xi_T}\circ\partial_T^\mathrm{nr.}\right)\rlap{\ .}
    \end{align*}
\end{quote}
\end{prop}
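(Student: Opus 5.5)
The plan is to deduce Proposition \ref{Decomp-formula-with-section} from Theorem \ref{Decomp-formula} by applying the trivialization $\mathrm{pr}_{\mathbb{Q}/\mathbb{Z}}\circ\Phi_T^{\xi_S\boxplus\xi_{T\setminus S}}$ to both sides of the identity of sections there. The one genuinely new input is a compatibility lemma for $\oplus$ and the trivializations \eqref{scrLS-trivialization}.

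\emph{Compatibility lemma.} For $\lambda\in\mathscr{L}_S$ and $\lambda'\in\mathscr{L}_{T\setminus S}$ we want
\begin{align*}
    \mathrm{pr}_{\mathbb{Q}/\mathbb{Z}}\Phi_T^{\xi_S\boxplus\xi_{T\setminus S}}(\lambda\oplus\lambda')=\mathrm{pr}_{\mathbb{Q}/\mathbb{Z}}\Phi_S^{\xi_S}(\lambda)+\mathrm{pr}_{\mathbb{Q}/\mathbb{Z}}\Phi_{T\setminus S}^{\xi_{T\setminus S}}(\lambda').
\end{align*}
I would verify this on representatives $\lambda=[(\lambda_x)_{x\in S}]$ and $\lambda'=[(\lambda_x)_{x\in T\setminus S}]$, so that $\lambda\oplus\lambda'=[(\lambda_x)_{x\in T}]$.

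By definition of $\boxplus$, the section $\xi_S\boxplus\xi_{T\setminus S}$ evaluated at $\varpi_T(\lambda\oplus\lambda')$ equals $\xi_S(\varpi_S\lambda)\oplus\xi_{T\setminus S}(\varpi_{T\setminus S}\lambda')$. Choose representatives $(\xi_{S,x})_{x\in S}$ and $(\xi_{T\setminus S,x})_{x\in T\setminus S}$ of these two values. Then the defining sum $\sum_{x\in T}\mathrm{inv}_x(\lambda_x-\xi_x)$ splits as the sum over $S$ plus the sum over $T\setminus S$.

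Some care is needed here, because $\Phi^\xi$ is written in \eqref{scrLS-trivialization} in terms of local components $\xi_x$, which are only defined up to the relation defining $\times_{\mathbb{Q}/\mathbb{Z}}$. I would note that $\sum_x\mathrm{inv}_x(\lambda_x-\xi_x)$ depends only on the classes $[\lambda]$ and $[\xi]$. This holds because changing representatives alters the local differences by elements whose total sum is $0$, by the description of $\mathscr{L}_1\times_H\cdots\times_H\mathscr{L}_n$.

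For $S=\emptyset$, where $\mathscr{L}_\emptyset=\mathbb{Q}/\mathbb{Z}$ acts on $\mathscr{L}_T$, the analogous statement is simply $\mathrm{pr}_{\mathbb{Q}/\mathbb{Z}}\Phi_T^{\xi_T}(c+\mu)=c+\mathrm{pr}_{\mathbb{Q}/\mathbb{Z}}\Phi_T^{\xi_T}(\mu)$. This is just the $\mathbb{Q}/\mathbb{Z}$-equivariance of $\Phi_T^{\xi_T}$.

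\emph{Case (1).} Apply the lemma to $\lambda=\mathrm{BF}_{X_S}(\rho)$ and $\lambda'=\mathrm{BF}_{T\setminus S}^\mathrm{nr}(\partial^\mathrm{nr}_{T\setminus S}\rho)$ for $\rho\in\mathscr{F}(X_S)$. By Theorem \ref{Decomp-formula}(1) the left side is $\mathrm{BF}_{X_T}^{\xi_S\boxplus\xi_{T\setminus S}}(\rho)$. The right side is, by definition \eqref{secBF}, $\mathrm{BF}_{X_S}^{\xi_S}(\rho)+\mathrm{BF}_{T\setminus S}^{\mathrm{nr},\xi_{T\setminus S}}(\partial^\mathrm{nr}_{T\setminus S}\rho)$.

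One must check that the pullback trivializations $\partial_S^\ast\Phi$ and $\Phi^{\mathrm{nr}}$ are literally the restrictions of $\Phi$. They are, since they are pullbacks. One must also check that the image of $\partial^\mathrm{nr}_{T\setminus S}\rho$ under $\mathscr{F}^\mathrm{nr}\to\mathscr{F}$ agrees with $\mathrm{pr}_{T\setminus S}\partial_T\rho$. This follows from Remark \ref{FBC-pullback}.

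\emph{Case (2).} Apply the $\mathbb{Q}/\mathbb{Z}$-equivariance to Theorem \ref{Decomp-formula}(2). Here $\mathrm{BF}_X\in\mathbb{Q}/\mathbb{Z}=\mathscr{L}_\emptyset$ needs no trivialization, which gives $\mathrm{BF}_{X_T}^{\xi_T}=\mathrm{BF}_X+\mathrm{BF}_T^{\mathrm{nr},\xi_T}\circ\partial_T^\mathrm{nr}$ on $\mathscr{F}(X)$.

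The main obstacle is the well-definedness bookkeeping in the compatibility lemma: showing that $\Phi^\xi$ is independent of the chosen local representatives, and that $\boxplus$ is compatible with choosing representatives componentwise. The rest is formal.
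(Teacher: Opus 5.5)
Your proposal is correct and takes the same route as the paper, whose proof is the single remark that the result follows by applying the trivializations \eqref{scrLS-trivialization} to Theorem \ref{Decomp-formula}. Your compatibility lemma (additivity of $\mathrm{pr}_{\mathbb{Q}/\mathbb{Z}}\circ\Phi$ under $\oplus$ and $\boxplus$, and $\mathbb{Q}/\mathbb{Z}$-equivariance when $S=\emptyset$), together with the independence of representatives, is exactly the bookkeeping that remark leaves implicit.
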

\begin{proof} Follows from taking the trivialization of Theorem \ref{Decomp-formula}.
\end{proof}

\section{Quantum arithmetic BF theory}

\subsection{Hermitian line bundles}

We have an obvious homomorphism:
\begin{align}
    \xymatrix{\mathbb{Q}/\mathbb{Z} \ar[r] & \mathbb{C}^\times & t \ar@{|->}[r] & \exp\left(2\pi\sqrt{-1}t\right)}
\end{align}
whose image lies in $\mathrm{U}(1)$. Using this, we form
\begin{align*}
    L_S:=\mathscr{L}_S\times_{\mathbb{Q}/\mathbb{Z}}\mathbb{C}
\end{align*}
so that we have a canonical map on $\mathscr{F}_S$:
\begin{align*}
    \xymatrix{\eta_S:\mathscr{L}_S \ar[r] & L_S & \lambda \ar@{|->}[r] & [\lambda,1]\rlap{\ .}}
\end{align*}
Then $\varpi_S:\mathscr{L}_S\rightarrow\mathscr{F}_S$ induces a complex line bundle:
\begin{align*}
    \xymatrix{\varpi_{S,\mathbb{C}}:L_S \ar[r] & \mathscr{F}_S\rlap{\ .}}
\end{align*}
Equip the standard inner product on each fiber $\mathbb{C}$. Since $\mathscr{F}_S$ is discrete, $\varpi_{S,\mathbb{C}}$ becomes a Hermitian line bundle. We use the Haar measure on the locally compact abelian group $\mathscr{F}_S$ normalized so that every point has measure $1$. Since $\mathscr{F}_S$ is finite by Section \ref{Localthy-torsor}, the resulting Hermitian inner product is well-defined:
\begin{align*}
    \xymatrix{\langle\cdot,\cdot\rangle:\Gamma(\mathscr{F}_S,L_S)\times\Gamma(\mathscr{F}_S,L_S) & (\sigma,\tau) \ar@{|->}[r] & \displaystyle\sum_{\alpha\in\mathscr{F}_S}\sigma(\alpha)\overline{\tau(\alpha)}}.
\end{align*}
We take the Hilbert space:
\begin{align*}
    \mathscr{H}_S:=(\Gamma(\mathscr{F}_S,L_S),\langle\cdot,\cdot\rangle)\rlap{\ .}
\end{align*}

Given $\xi\in\Gamma(\mathscr{F}_S,\mathscr{L}_S)$, taking the associated line bundles of the isomorphism \eqref{scrLS-trivialization}, we get a trivialization of a complex line bundle:
\begin{align*}
    \xymatrixcolsep{1.5pc}\xymatrix{\Phi_{S,\mathbb{C}}^\xi:L_S \ar[r]^-\sim & \mathscr{F}_S\times\mathbb{C} & (\lambda,z) \ar@{|->}[r] & \displaystyle\left(\varpi_S(\lambda),z\cdot\exp\left(2\pi\sqrt{-1}\mathrm{pr}_{\mathbb{Q}/\mathbb{Z}}\Phi_S^\xi(\lambda)\right)\right)}
\end{align*}
which induces an isomorphism of Hilbert spaces:
\begin{align}\label{HS-trivialization}
    \xymatrix{\mathscr{H}_S \ar[r]^-\sim & \mathrm{Set}(\mathscr{F}_S,\mathbb{C})\rlap{\ .}}
\end{align}
where the right hand side is endowed with the standard Hermitian inner product.

\begin{rmk}\label{HS-tensor-decomp} External tensor products of $(L_u)_{u\in S}$ along $\mathrm{pr}_u:\mathscr{F}_S\rightarrow\mathscr{F}_u$ can be described as follows:
\begin{align*}
    \bigotimes_{u\in S}\mathrm{pr}_u^\ast L_u=\left\{\left(\rho,\bigotimes_{u\in S}[\lambda_u,z_u]\right)\ \middle|\ \begin{array}{c}
        \rho=(\rho_u)_{u\in S}\in\mathscr{F}_S  \\\relax
        [\lambda_u,z_u]\in L_u(\rho_u)
    \end{array} \right\}\rlap{\ .}
\end{align*}
Then we have a well-defined isomorphism of complex line bundles over $\mathscr{F}_S$:
\begin{align*}
    \xymatrix{\displaystyle\bigotimes_{u\in S}\mathrm{pr}_u^\ast L_u \ar[r] & L_S & \displaystyle\left(\rho,\bigotimes_{u\in S}[\lambda_u,z_u]\right) \ar@{|->}[r] & \displaystyle\left[[\lambda_u]_{u\in S},\prod_{u\in S}z_u\right]}
\end{align*}
with an inverse:
\begin{align*}
    \xymatrix{L_S \ar[r] & \displaystyle\bigotimes_{u\in S}\mathrm{pr}_u^\ast L_u & [\lambda,z] \ar@{|->}[r] & \displaystyle\left(\varpi_S(\lambda),z\cdot\bigotimes_{u\in S}[\lambda_u,1]\right)}\rlap{\ .}
\end{align*}
This gives an isomorphism of Hilbert spaces:
\begin{align*}
    \xymatrix{\displaystyle\mathop{\widehat{\bigotimes}}_{u\in S}\mathscr{H}_u \ar[r]^-\sim & \mathscr{H}_S & \displaystyle\bigotimes_{u\in S}\sigma_u \ar@{|->}[r] & \displaystyle\bigotimes_{u\in S}\mathrm{pr}_u^\ast\sigma_u\rlap{\ .}}
\end{align*}
Given sections $\xi_u\in\Gamma(\mathscr{F}_u,\mathscr{L}_u)$ for each $u\in S$, if we take
\begin{align*}
    \xi_S:=\mathop{\boxplus}_{u\in S}\xi_u\in\Gamma(\mathscr{F}_S,\mathscr{L}_S)
\end{align*}
then the above isomorphism fits into the following commutative square:
\begin{align*}
    \xymatrixrowsep{3pc}\xymatrix{
    \displaystyle\mathop{\widehat{\bigotimes}}_{u\in S}\mathscr{H}_u \ar[d]^-\wr \ar[r]^-\sim & \mathscr{H}_S \ar[d]^-\wr & & \\
    \displaystyle\bigotimes_{u\in S}\Gamma(\mathscr{F}_u,\mathbb{C}) \ar[r]^-\sim & \Gamma(\mathscr{F}_S,\mathbb{C}) & \displaystyle\bigotimes_{u\in S}f_u \ar@{|->}[r] & \displaystyle\prod_{u\in S}(f_u\circ\mathrm{pr}_u)
    }
\end{align*}
where the columns come from \eqref{HS-trivialization} and the bottom row is the canonical isomorphism.
\end{rmk}

\begin{notation}\label{HS-dual} Denote $L_{S^\ast}:=\mathscr{L}_{S^\ast}\times_{\mathbb{Q}/\mathbb{Z}}\mathbb{C}$ the complex line bundle associated to $\mathscr{L}_{S^\ast}$. Then the fiberwise complex conjugation gives a $\mathbb{C}$-semilinear isomorphism:
\begin{align*}
    \xymatrix{L_S \ar[r]^-\sim & L_{S^\ast} & (\lambda,z) \ar@{|->}[r] & (\lambda,\overline{z})\rlap{\ .}}
\end{align*}
This gives a topological $\mathbb{C}$-semilinear isomorphism:
\begin{align*}
    \xymatrix{\mathscr{H}_S \ar[r]^-\sim & \mathscr{H}_{S^\ast} & \sigma \ar@{|->}[r] & \overline{\sigma}\rlap{\ .}}
\end{align*}
Consequently, the pairing
\begin{align*}
    \xymatrix{\mathscr{H}_{S^\ast}\times\mathscr{H}_S \ar[r] & \mathbb{C} & (\chi,\theta) \ar@{|->}[r] & \displaystyle\sum_{\rho\in\mathscr{F}_S}\chi(\rho)\theta(\rho)}
\end{align*}
recovers the defining Hermitian inner product on $\mathscr{H}_S$. Therefore, the above isomorphism gives an isomorphism of Hilbert spaces:
\begin{align*}
    \xymatrix{\mathscr{H}_{S^\ast} \ar[r]^-\sim & \mathscr{H}_S^\ast & \chi \ar@{|->}[r] & \displaystyle\left(\theta\longmapsto\sum_{\rho\in\mathscr{F}_S}\chi(\rho)\theta(\rho)\right)\rlap{\ .}}
\end{align*}
In what follows, we freely use this identification.
\end{notation}

\subsection{Partition functions} We first study a possible finiteness condition on $\mathscr{F}(X_S)$.

\begin{defn}\label{defn-Selmer} Let $G$ be a commutative finite $K$-group scheme. By \cite[1.11]{Kes15}, $H_\mathrm{fppf}^1(K_x,G)$ is a locally compact Hausdorff abelian topological group which is discrete if $G$ is \'etale over $K_x$. A \emph{Selmer condition} for a commutative finite $K$-group scheme $G$ is a compact subgroup of the form:
\begin{align*}
    W=\prod_{x\in X^\mathrm{cl}}W_x\subseteq\prod_{x\in X^\mathrm{cl}}H_\mathrm{fppf}^1(K_x,G)
\end{align*}
which are endowed with the product topology, such that there is a nonempty open subset $U\subseteq X$ and a commutative finite flat $U$-model $\mathcal{G}$ of $G$ satisfying the condition that $W_u\subseteq H_\mathrm{fppf}^1(\mathcal{O}_u,\mathcal{G})$ for every $u\in U^\mathrm{cl}$. Then the associated \emph{Selmer group} is defined to be the pullback
\begin{align*}
    \xymatrix{
    \mathrm{Sel}(G,W) \ar[d] \ar@{^(->}[r] & H_\mathrm{fppf}^1(K,G) \ar[d] \\
    W \ar@{^(->}[r] & \displaystyle\prod_{x\in X^\mathrm{cl}}H_\mathrm{fppf}^1(K_x,G)
    }
\end{align*}
of topological abelian groups, where $H_\mathrm{fppf}^1(K,G)$ is regarded as a discrete group.
\end{defn}

\begin{rmk}\label{Sel-finite} By \cite[Theorem 3.2]{Kes17}, $\mathrm{Sel}(G,W)$ is finite.
\end{rmk}

\begin{ex} Let $\mathcal{G}$ be a nice commutative group scheme over $U$. By Remark \ref{Hfppftop} and the proof of Lemma \ref{FX}, $H_\mathrm{fppf}^1(U,\mathcal{G})$ and $D_\mathrm{fppf}^1(U,\mathcal{G})$ are Selmer groups.
\end{ex}

\begin{rmk}\label{cptbdycond} By Section \ref{Localthy-torsor}, each $\mathrm{BC}_x$ is finite for every $x\in X^\mathrm{cl}$. Denote
\begin{align*}
    \xymatrix{\mathrm{pr}_1:\mathscr{F}_x \ar[r] & H^1(G_{K,x},M_1^\vee) & \mathrm{pr}_2:\mathscr{F}_x \ar[r] & H^1(G_{K,x},M_2)}
\end{align*}
the projection onto each component. Since $\mathrm{BC}_x$ is compact, $\mathrm{pr}_i(\mathrm{BC}_x)$ is compact for $i=1,2$ so we may define the corresponding Selmer groups:
\begin{align*}
    \mathrm{Sel}\left(M_1^\vee,\prod_{x\in X^\mathrm{cl}}\mathrm{pr}_i(\mathrm{BC}_x)\right),\quad\mathrm{Sel}\left(M_2,\prod_{x\in X^\mathrm{cl}}\mathrm{pr}_2(\mathrm{BC}_x)\right)\rlap{\ .}
\end{align*}
By construction, we have the following pullback square:
\begin{align*}
    \xymatrix{
    \mathscr{F}(X_S) \ar[d] \ar[r] & \displaystyle\mathrm{Sel}\left(M_1^\vee,\prod_{x\in X^\mathrm{cl}}\mathrm{pr}_1(\mathrm{BC}_x)\right)\times\mathrm{Sel}\left(M_2,\prod_{x\in X^\mathrm{cl}}\mathrm{pr}_2(\mathrm{BC}_x)\right) \ar[d] \\
    \mathrm{BC}(\partial X_S) \ar[r] & \displaystyle\prod_{x\in X^\mathrm{cl}}(\mathrm{pr}_1(\mathrm{BC}_x)\times\mathrm{pr}_2(\mathrm{BC}_x))
    }
\end{align*}
where the bottom row is the product of the following composite of canonical maps:
\begin{align*}
    \xymatrix{\mathrm{BC}(\partial X_S)_x \ar[r] & \mathrm{BC}_x \ar@{^(->}[r] & \mathrm{pr}_1(\mathrm{BC}_x)\times\mathrm{pr}_2(\mathrm{BC}_x)\rlap{\ .}}
\end{align*}
This map is injective for all but finitely many $x\in X^\mathrm{cl}$. Since $\mathrm{BC}(\partial X_S)_x$ is finite for every $x\in X^\mathrm{cl}$, the bottom row of the above pullback square has finite fibers. Since the Selmer groups are finite by Remark \ref{Sel-finite}, we conclude that $\mathscr{F}(X_S)$ is finite.
\end{rmk}

\begin{defn}\label{partftn-X} We define
\begin{align*}
    Z_X:=\sum_{\rho\in\mathscr{F}(X)}\exp\left(2\pi\sqrt{-1}\mathrm{BF}_X(\rho)\right)\in\mathbb{C}\rlap{\ .}
\end{align*}
Since $\mathscr{F}(X)$ is finite by Remark \ref{cptbdycond}, this sum is finite.
\end{defn}

\begin{defn}\label{partftn-XS} Given a finite subset $S\subseteq X^\mathrm{cl}$, denote each fiber as in the following left pullback square so that $BF_{X_S}$ restricts to each fiber and the following right diagram becomes a pullback square:
\begin{align*}
    \xymatrix{
    \mathscr{F}(X_S,\rho_S) \ar[d] \ar@{^(->}[r] & \mathscr{F}(X_S) \ar[d]^-{\partial_S} \\
    \{\rho_S\} \ar@{^(->}[r] & \mathscr{F}_S
    }
    \quad\quad
    \xymatrix{
    \mathscr{F}(X_S,\rho_S) \ar@{-->}[d]_-{\mathrm{BF}_{X_S}} \ar@{^(->}[r] & \mathscr{F}(X_S) \ar[d]^-{\mathrm{BF}_{X_S}} \\
    \mathscr{L}_S(\rho_S) \ar@{^(->}[r] & \mathscr{L}_S\rlap{\ .}
    }
\end{align*}
Then we define the arithmetic BF partition functions on $X_S$ as follows:
\begin{align*}
    \xymatrix{Z_{X_S}:\mathscr{F}_S \ar[r] & L_S & \rho_S \ar@{|->}[r] & \displaystyle\sum_{\rho\in\mathscr{F}(X_S,\rho_S)}\left(\eta_S\circ\mathrm{BF}_{X_S}\right)(\rho)\rlap{\ .}}
\end{align*}
By the above observation, each summand lies in the fiber of $\rho_S$ so the sum makes sense. By Remark \ref{cptbdycond}, the sum is finite.
\end{defn}

\begin{rmk}\label{partftn-compatibility} As a sanity check, consider the case $S=\emptyset$ in Definition \ref{partftn-XS}. Since $\mathscr{F}_\emptyset=\ast$ and $\mathscr{L}_\emptyset=\mathbb{Q}/\mathbb{Z}$, the associated Hermitian line bundle becomes $\mathbb{C}\rightarrow\ast$ together with
\begin{align*}
    \xymatrix{\eta_\emptyset:\mathbb{Q}/\mathbb{Z} \ar[r] & \mathbb{C} & \lambda \ar@{|->}[r] & \exp\left(2\pi\sqrt{-1}\lambda\right)}
\end{align*}
Combining these, we get
\begin{align*}
    \xymatrix{Z_{X_\emptyset}:\ast \ar[r] & \mathbb{C} & \ast \ar@{|->}[r] & \displaystyle\sum_{\rho\in\mathscr{F}(X)}\exp\left(2\pi\sqrt{-1}\mathrm{BF}_X(\rho)\right)}
\end{align*}
which recovers Definition \ref{partftn-X}.
\end{rmk}

\begin{defn} Given finite subsets $S\subsetneq T\subseteq X^\mathrm{cl}$, denote each fiber as in the following left pullback squares so that $\mathrm{BF}_{T\setminus S}^\mathrm{nr}$ restricts to each fiber and the following right diagram becomes a pullback square:
\begin{align*}
    \xymatrix{
    \mathrm{BC}(\partial X_S,\rho_{T\setminus S}) \ar@{^(->}[d] \ar@{^(->}[r] & \mathrm{BC}(\partial X_S)_{T\setminus S} \ar@{^(->}[d] \\
    \mathscr{F}_{T\setminus S}^\mathrm{nr}(\rho_{T\setminus S}) \ar[d] \ar@{^(->}[r] & \mathscr{F}_{T\setminus S}^\mathrm{nr} \ar[d] \\
    \{\rho_{T\setminus S}\} \ar@{^(->}[r] & \mathscr{F}_{T\setminus S}
    }
    \quad\quad
    \xymatrix{
    \mathrm{BC}(\partial X_S,\rho_{T\setminus S}) \ar@{^(->}[d] \ar@{^(->}[r] & \mathrm{BC}(\partial x_S)_{T\setminus S} \ar@{^(->}[d] \\
    \mathscr{F}_{T\setminus S}^\mathrm{nr}(\rho_{T\setminus S}) \ar@{-->}[d]_-{\mathrm{BF}_{T\setminus S}^\mathrm{nr}} \ar@{^(->}[r] & \mathscr{F}_{T\setminus S}^\mathrm{nr} \ar[d]^-{\mathrm{BF}_{T\setminus S}^\mathrm{nr}} \\
    \mathscr{L}_{T\setminus S}(\rho_{T\setminus S}) \ar@{^(->}[r] & \mathscr{L}_{T\setminus S}\rlap{\ .}
    }
\end{align*}
Then we define
\begin{align*}
    \xymatrix{Z_{T\setminus S}^{\partial X_S}:\mathscr{F}_{T\setminus S} \ar[r] & L_{T\setminus S} & \rho_{T\setminus S} \ar@{|->}[r] & \displaystyle\sum_{\rho\in\mathrm{BC}(\partial X_S,\rho_{T\setminus S})}\left(\eta_{T\setminus S}\circ\mathrm{BF}_{T\setminus S}^\mathrm{nr}\right)(\rho)\rlap{\ .}}
\end{align*}
By the above observation, each summand lies in the fiber of $\rho_{T\setminus S}$ so the sum makes sense. Since $\mathscr{F}_{T\setminus S}^\mathrm{nr}$ is finite by Section \ref{Localthy-BFnr}, the sum is finite.
\end{defn}

\begin{rmk} Along the isomorphism \eqref{HS-trivialization}, we get functions
\begin{align*}
    \xymatrix{Z_{X_S}^\xi:\mathscr{F}_S \ar[r] & \mathbb{C} & \rho_S \ar@{|->}[r] & \displaystyle\sum_{\rho\in\mathscr{F}(X_S,\rho_S)}\exp\left(2\pi\sqrt{-1}\mathrm{BF}_{X_S}^\xi(\rho)\right)}
\end{align*}
\begin{align*}
    \xymatrix{Z_{T\setminus S}^{\partial X_S,\xi}:\mathscr{F}_{T\setminus S} \ar[r] & \mathbb{C} & \rho_{T\setminus S} \ar@{|->}[r] & \displaystyle\sum_{\rho\in\mathrm{BC}(\partial X_S,\rho_{T\setminus S})}\exp\left(2\pi\sqrt{-1}\mathrm{BF}_S^{\mathrm{nr},\xi}(\rho)\right)}
\end{align*}
corresponding to the arithmetic BF partition functions.
\end{rmk}

\subsection{Gluing formula} Let $S\subseteq T\subseteq Y^\mathrm{cl}$ be finite subsets. By Remark \ref{HS-tensor-decomp}, we have a canonical isomorphism of Hilbert spaces:
\begin{align*}
    \xymatrix{\mathscr{H}_S\mathbin{\widehat{\otimes}}\mathscr{H}_{T\setminus S} \ar[r]^-\sim & \mathscr{H}_T\rlap{\ .}}
\end{align*}
Using this, we get the following 
\begin{align*}
    \xymatrix{\langle\cdot,\cdot\rangle:\mathscr{H}_T\mathbin{\widehat{\otimes}}\mathscr{H}_{(T\setminus S)^\ast} \ar[r]^-\sim & \mathscr{H}_S\mathbin{\widehat{\otimes}}\mathscr{H}_{T\setminus S}\mathbin{\widehat{\otimes}}\mathscr{H}_{T\setminus S}^\ast \ar[r]^-{1\mathbin{\widehat{\otimes}}\mathrm{ev}} & \mathscr{H}_S\mathbin{\widehat{\otimes}}\mathbb{C} \ar[r]^-\sim & \mathscr{H}_S}
\end{align*}
where $\mathrm{ev}$ is the evaluation pairing. Given $\tau\in\mathscr{H}_T$ and $\chi\in\mathscr{H}_{(T\setminus S)^\ast}$, we have
\begin{align*}
    \xymatrix{\langle\tau,\chi\rangle:\mathscr{F}_S \ar[r] & L_S & \alpha \ar@{|->}[r] & \displaystyle\sum_{\gamma\in\mathscr{F}_{T\setminus S}}\tau(\alpha\oplus\gamma)\chi(\gamma)\rlap{\ .}}
\end{align*}

\begin{thm}[Gluing formula] Let $S\subseteq T\subseteq Y^\mathrm{cl}$ be finite subsets.
\begin{quote}
    (1) If $S\neq\emptyset$, then the following equality holds in $\mathscr{H}_S$:
    \begin{align*}
        Z_{X_S}=\left\langle Z_{X_T},Z_{(T\setminus S)^\ast}^{\partial X_S}\right\rangle\rlap{\ .}
    \end{align*}
    (2) If $S=\emptyset$, $X\setminus Y\subseteq T$, and the order of $\mathcal{M}$ is invertible on $Y_T$, then the following equality holds in $\mathbb{C}$:
    \begin{align*}
        Z_X=\left\langle Z_{X_T},Z_{T^\ast}^{\partial X}\right\rangle\rlap{\ .}
    \end{align*}
\end{quote}
\end{thm}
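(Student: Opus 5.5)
Both parts follow the same plan: evaluate both sides at a boundary datum $\alpha\in\mathscr{F}_S$, expand the pairing as a finite sum, and regroup. The key input is the decomposition formula (Theorem \ref{Decomp-formula}), applied fiberwise.

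\textbf{Unfolding the pairing.} Since $Z_{(T\setminus S)^\ast}^{\partial X_S}$ is a section of $L_{(T\setminus S)^\ast}$, which Notation \ref{HS-dual} identifies with the dual bundle, the pairing is
\begin{align*}
    \left\langle Z_{X_T},Z_{(T\setminus S)^\ast}^{\partial X_S}\right\rangle(\alpha)=\sum_{\gamma\in\mathscr{F}_{T\setminus S}}\ \sum_{\rho\in\mathscr{F}(X_T,\alpha\oplus\gamma)}\ \sum_{\mu\in\mathrm{BC}(\partial X_S,\gamma)}\eta_T(\mathrm{BF}_{X_T}(\rho))\otimes\eta_{(T\setminus S)^\ast}(\mathrm{BF}^\mathrm{nr}_{T\setminus S}(\mu)).
\end{align*}
Here the contraction $L_T(\alpha\oplus\gamma)\otimes L_{(T\setminus S)^\ast}(\gamma)\to L_S(\alpha)$ comes from Remark \ref{HS-tensor-decomp} together with the evaluation map. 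To make this concrete, I would fix sections $\xi_S,\xi_{T\setminus S}$ and pass to trivializations via \eqref{HS-trivialization}. By Notation \ref{Bdy-reversed}, $\mathrm{BF}^{\mathrm{nr},\xi}_{(T\setminus S)^\ast}=-\mathrm{BF}^{\mathrm{nr},\xi}_{T\setminus S}$, so each summand becomes
\begin{align*}
    \exp\bigl(2\pi\sqrt{-1}(\mathrm{BF}^{\xi_S\boxplus\xi_{T\setminus S}}_{X_T}(\rho)-\mathrm{BF}^{\mathrm{nr},\xi_{T\setminus S}}_{T\setminus S}(\mu))\bigr).
\end{align*}

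\textbf{Reindexing the sum.} The triples $(\gamma,\rho,\mu)$ appearing above correspond to pairs $(\rho,\mu)\in\mathscr{F}(X_T)\times\mathrm{BC}(\partial X_S)_{T\setminus S}$ with $\partial_{T\setminus S}\rho=\mu$ in $\mathscr{F}_{T\setminus S}$ and $\partial_S\rho=\alpha$. Remark \ref{FBC-pullback} says the front face of its cube is a pullback, so these pairs are exactly the elements of $\mathscr{F}(X_S,\alpha)$, with $\mu=\partial^\mathrm{nr}_{T\setminus S}\rho$. One subtlety is that $\mathscr{F}^\mathrm{nr}_y\to\mathscr{F}_y$ may fail to be injective at finitely many $y$. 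The sum over $\mu$ therefore counts elements of $\mathrm{BC}(\partial X_S)_{T\setminus S}$, not just their images, and the pullback $\mathscr{F}(X_S)$ (built from $\mathrm{BC}(\partial X_S)$) counts the same way, so the bookkeeping matches. For $S\subseteq T\subseteq Y^\mathrm{cl}$ this is in any case harmless.

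\textbf{Applying the decomposition formula.} With the sum indexed by $\rho\in\mathscr{F}(X_S,\alpha)$, Proposition \ref{Decomp-formula-with-section}(1) gives
\begin{align*}
    \mathrm{BF}^{\xi_S\boxplus\xi_{T\setminus S}}_{X_T}(\rho)-\mathrm{BF}^{\mathrm{nr},\xi_{T\setminus S}}_{T\setminus S}(\partial^\mathrm{nr}_{T\setminus S}\rho)=\mathrm{BF}^{\xi_S}_{X_S}(\rho).
\end{align*}
The sum therefore collapses to $Z^{\xi_S}_{X_S}(\alpha)$, which proves (1). Part (2) runs the same way: $\mathscr{F}_\emptyset=\ast$, and Proposition \ref{Decomp-formula-with-section}(2) applies under the stated hypotheses $X\setminus Y\subseteq T$ and the order of $\mathcal{M}$ invertible on $Y_T$, yielding $Z_X$ as in Remark \ref{partftn-compatibility}.

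\textbf{Main obstacle.} I expect the hardest step to be checking that the identifications really are compatible. Specifically, the contraction pairing on line bundles must correspond, under the trivializations $\Phi^\xi$, to adding the phases with the sign coming from the dual. This requires checking that $\Phi^{\xi_S\boxplus\xi_{T\setminus S}}_T\circ\oplus$ equals $\Phi^{\xi_S}_S+\Phi^{\xi_{T\setminus S}}_{T\setminus S}$, which I would verify directly from \eqref{scrLS-trivialization}, since it is additive in the local invariants. Once this holds, the proof reduces to the bijection and the decomposition formula.
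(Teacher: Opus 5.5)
Your proposal is correct and follows essentially the same route as the paper. Like the paper, you decompose the fiber $\mathscr{F}(X_S,\alpha)\cong\coprod_{\gamma\in\mathscr{F}_{T\setminus S}}\mathscr{F}(X_T,\alpha\oplus\gamma)\times\mathrm{BC}(\partial X_S,\gamma)$ via the pullback cube of Remark \ref{FBC-pullback}, trivialize with sections $\xi_S,\xi_{T\setminus S}$ (using the sign flip from the dual torsor), and collapse the sum with Proposition \ref{Decomp-formula-with-section}, handling part (2) the same way via Remark \ref{partftn-compatibility}.
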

\begin{proof} (1) Given $\alpha\in\mathscr{F}_S$, we have the following commutative cube all of whose faces are pullback squares:
\begin{align*}
    \xymatrix@!0@R=4pc@C=6pc{
    & \displaystyle\coprod_{\gamma\in\mathscr{F}_{T\setminus S}}\mathscr{F}(X_T,\alpha\oplus\gamma)\times\mathrm{BC}(\partial X_S,\gamma) \ar[dl] \ar[dd]|\hole \ar@{^(->}[rr] & & \mathscr{F}(X_S) \ar[dl] \ar[dd]^(.25){(\partial_S,\partial_{T\setminus S}^\mathrm{nr})} \\
    \displaystyle\coprod_{\gamma\in\mathscr{F}_{T\setminus S}}\mathscr{F}(X_T,\alpha\oplus\gamma) \ar[dd] \ar@{^(->}[rr] & & \mathscr{F}(X_T) \ar[dd]^(.25){\partial_T} & \\
    & \{\alpha\}\times\mathrm{BC}(\partial X_S)_{T\setminus S} \ar[dl] \ar@{^(->}[rr]|\hole & & \mathscr{F}_S\times\mathrm{BC}(\partial X_S)_{T\setminus S}\rlap{\ .} \ar[dl] \\
        \{\alpha\}\times\mathscr{F}_{T\setminus S} \ar@{^(->}[rr] & & \mathscr{F}_T &
    }
\end{align*}
Since $\{\alpha\}\times\mathrm{BC}(\partial X_S)_{T\setminus S}=\mathrm{pr}_S^{-1}(\alpha)$, we conclude that 
\begin{align*}
    \mathscr{F}(X_S,\alpha)\cong\coprod_{\gamma\in\mathscr{F}_{T\setminus S}}\mathscr{F}(X_T,\alpha\oplus\gamma)\times\mathrm{BC}(\partial X_S,\gamma)\rlap{\ .}
\end{align*}
Using this bijection and Proposition \ref{Decomp-formula-with-section}, we compute
\begin{align*}
    &\quad\left\langle Z_{X_T},Z_{(T\setminus S)^\ast}^{\partial X_S}\right\rangle(\alpha)\\
    &=\sum_{\gamma\in\mathscr{F}_{T\setminus S}}Z_{X_T}(\alpha\oplus\gamma)\otimes Z_{(T\setminus S)^\ast}^\mathrm{nr}(\gamma)\\
    &=\sum_{\gamma\in\mathscr{F}_{T\setminus S}}\sum_{\substack{\widetilde{\beta}\in\mathscr{F}(X_T,\alpha\oplus\gamma) \\ \widetilde{\gamma}\in\mathrm{BC}(\partial X_S,\gamma)}}\left(\eta_T\circ\mathrm{BF}_{X_T}\right)(\widetilde{\beta})\otimes\left(\eta_{T\setminus S}\circ\mathrm{BF}_{(T\setminus S)^\ast}^{\partial X_S}\right)(\widetilde{\gamma})\\
    &=\sum_{\gamma\in\mathscr{F}_{T\setminus S}}\sum_{\substack{\widetilde{\beta}\in\mathscr{F}(X_T,\alpha\oplus\gamma) \\ \widetilde{\gamma}\in\mathrm{BC}(\partial X_S,\gamma)}}\exp\left(2\pi\sqrt{-1}\left(\mathrm{BF}_{X_T}^{\xi_S\boxplus\xi_{T\setminus S}}(\widetilde{\beta})-\mathrm{BF}_{T\setminus S}^{\mathrm{nr},\xi_{T\setminus S}}(\widetilde{\gamma})\right)\right)\\
    &=\sum_{\widetilde{\alpha}\in\mathscr{F}(X_S,\alpha)}\exp\left(2\pi\sqrt{-1}\mathrm{BF}_{X_S}^{\xi_S}(\widetilde{\alpha})\right)\\
    &=\sum_{\widetilde{\alpha}\in\mathscr{F}(X_S,\alpha)}\left(\eta_S\circ\mathrm{BF}_{X_S}\right)(\widetilde{\alpha})\\
    &=Z_{X_S}(\alpha)\rlap{\ .}
\end{align*}
Therefore, the first assertion follows.\\
\indent (2) With Remark \ref{partftn-compatibility}, the same argument as above works.
\end{proof}

\section{The Cassels-Tate pairing for finite Galois modules}
We briefly review the Cassels-Tate pairing \cite[Definition 2.2]{MS24}.
First, we recall the definition of the category $\mathrm{SMod}_K$ of ``Selmerable'' modules defined in \cite[section 2.2.1]{MS24}.
An object in $\mathrm{SMod}_K$ is a pair $(M, \mathscr{W})$, where $M$ is a finite $G_K$-module of order indivisible by the characteristic of $K$ and $\mathscr{W}$ is a Selmer condition for $M$. Specifically, $\mathscr{W}$ is a compact open subgroup of the restricted direct product of local cohomology groups with respect to unramified cohomology classes
$$\mathscr{W} \subseteq \sideset{}{'}\prod_{v} H^1(G_{K,v}, M);$$
for almost all places $v$, $\mathscr{W}_v$ coincides with the unramified subgroup 
$H^1(G_{K,v}^{\mathrm{nr}}, M^{I_{K,x}})$.
A morphism between two objects $(M_1, \mathscr{W}_1) \to (M_2, \mathscr{W}_2)$ is a $G_K$-equivariant map $\phi: M_1 \to M_2$ such that the induced map on local cohomology respects the Selmer conditions
$$\phi_*(\mathscr{W}_1) \subseteq \mathscr{W}_2.$$
where $\phi_*$ is the map $H^1(G_{K,v}, M_1) \to H^1(G_{K,v}, M_2)$ for every place $v$.
This category is a quasi-abelian category (\cite[Defintion 4.10]{MS21}) and there is a notion of an exact sequence. The ``Selmerable'' part of the name refers to the fact that these modules are compatible with forming Selmer groups. For a sequence
$$0 \to (M_1, \mathscr{W}_1) \xrightarrow{\iota} (M, \mathscr{W}) \xrightarrow{\pi} (M_2, \mathscr{W}_2) \to 0$$
to be exact in $\mathrm{SMod}_K$, the underlying sequence of $G_K$-modules must be exact, and the local conditions must satisfy the Selmer-exactness conditions:
$$\iota^{-1}(\mathscr{W}) = \mathscr{W}_1, \quad \pi(\mathscr{W}) = \mathscr{W}_2.$$

For example, consider an $n$-isogeny of elliptic curves $\phi: E \to E'$ over $K$. This gives a sequence of $G_K$-modules:
$$0 \to E[\phi] \to E[n\phi] \to E[n] \to 0.$$
We can define $\mathscr{W}=\prod_v' \mathscr{W}_v$ using the local points of $E$:
$\mathscr{W}_v = \mathrm{im}(E(K_v) \to H^1(G_{K,v}, E[n\phi]))$. The conditions $\iota^{-1}(\mathscr{W}) = \mathscr{W}_1$ and $\pi(\mathscr{W}) = \mathscr{W}_2$ are naturally satisfied if $\mathscr{W}_1$ is the local images of the Kummer map for $E[\phi]$ (via $E'(K_v)$), and $\mathscr{W}_2$ is the local images for $E[n]$.

The input for the Cassels-Tate pairing is an exact sequence of finite $G_K$-modules:
    \begin{align}\label{CTexact}
    \xymatrix{0 \ar[r] & M_1 \ar[r]^-\iota & M \ar[r]^-\pi & M_2 \ar[r] & 0\rlap{\ .}}
    \end{align}
Let $\mathscr{W}$ be a compact subgroup of the restricted direct product $\prod'_v H^1(G_{K,v}, M)$ with respect to the subgroup $H^1(G_{K,v}^{\mathrm{nr}}, M^{I_{K,x}}) \hookrightarrow H^1(G_{K,v}, M)$.
 Let $\mathscr{W}^\perp$ be the orthogonal complement of $\mathscr{W}$ with respect to the pairing induced by the Tate local duality.
 Motivated by the notion of the exact sequence in $\mathrm{SMod}_K$, we impose the following condition on \eqref{CTexact}:
\begin{eqnarray}\label{SelmerM}
    \iota^{-1}(\mathscr{W}) =\mathscr{W}_1 \text{ and } \pi(\mathscr{W})=\mathscr{W}_2.
\end{eqnarray}
We denote by $E$ the corresponding exact sequence in the category $\mathrm{SMod}_K$ defined in \cite[section 2.2.1]{MS24}:
\begin{align}\label{Eexact}
    E:=\left[\xymatrix{0 \ar[r] & (M_1,\mathscr{W}_1) \ar[r]^-\iota & (M,\mathscr{W}) \ar[r]^-\pi & (M_2,\mathscr{W}_2) \ar[r] & 0}\right].
\end{align}
Then we have the dual exact sequence of $G_K$-modules
\begin{align*}
    \xymatrix{0 \ar[r] & M_2^\vee \ar[r]^-{\pi^\vee} & M^\vee \ar[r]^-{\iota^\vee} & M_1^\vee \ar[r] & 0}
\end{align*}
such that ${(\pi^\vee)}^{-1}(\mathscr{W}^\perp) = \mathscr{W}_2^{\perp}$ and $\iota^\vee (\mathscr{W}^\perp) = \mathscr{W}_1^\perp$.
Define
\begin{align*}
    \mathrm{Sel}(M,\mathscr{W}):=\left(\xymatrix{H^1(G_K,M) \ar[r] & \displaystyle\sideset{}{'}\prod_{v\in X^\mathrm{cl}}H^1(G_{K,v},M)/\mathscr{W}}\right)
\end{align*}

With the input data \eqref{CTexact} and $\eqref{SelmerM}$ (equivalently, $\eqref{Eexact}$), Morgan-Smith attached a version of the Cassels-Tate pairing. 
\begin{thm}\cite[Theorem 2.1]{MS24}
    There is a bilinear pairing, called the Cassels-Tate pairing,
\begin{align*}
    \xymatrix{\mathrm{CTP}_E:   \mathrm{Sel} (M_1^\vee, \mathscr{W}_1^\perp) \times\mathrm{Sel} (M_2,\mathscr{W}_2)\ar[r] & \mathbb{Q}/\mathbb{Z}\rlap{\ .}}
\end{align*}
with left kernel $\pi(\mathrm{Sel} (M, \mathscr{W}))$ and right kernel $\iota^\vee(\mathrm{Sel} (M^\vee, \mathscr{W}^{\perp}))$.
\end{thm}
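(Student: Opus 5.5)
We construct $\mathrm{CTP}_E$ as a Poitou--Tate style ``lift, bound, and sum local invariants'' pairing, in the same spirit as the global BF functional \eqref{FUS-BF-functional}. Let $a\in\mathrm{Sel}(M_1^\vee,\mathscr{W}_1^\perp)$ and $b\in\mathrm{Sel}(M_2,\mathscr{W}_2)$, and choose cocycles $\alpha_1\in Z^1(G_K,M_1^\vee)$ and $\alpha_2\in Z^1(G_K,M_2)$ representing them. Choose $\sigma\in\mathscr{R}$. Then $d(\sigma\circ\alpha_2)$ takes values in $\iota(M_1)$, so $\alpha_1\cup d(\sigma\circ\alpha_2)\in Z^3(G_K,K^{\mathrm{sep},\times})$. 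Because $H^3(G_K,K^{\mathrm{sep},\times})$ has vanishing $\ell$-primary parts (\cite[Corollary I.4.18]{Mil06}), we can write it as $d\varphi$ with $\varphi\in C^2(G_K,K^{\mathrm{sep},\times})$, exactly as in Section \ref{Global-BF}.

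Next we use the local conditions at each place $v$. Since $\pi(\mathscr{W})=\mathscr{W}_2$, the class $\partial_v b$ lifts to some $\partial_v$-class in $\mathscr{W}_v$. So we can pick a local cocycle $\beta_v\in Z^1(G_{K,v},M)$ with $[\beta_v]\in\mathscr{W}_v$ and $\pi\beta_v=\partial_v\alpha_2+dg_v$. Then $\partial_v(\sigma\circ\alpha_2)-\beta_v+d(\sigma\circ g_v)$ is a cochain $\iota(\gamma_v)$ with $\gamma_v\in C^1(G_{K,v},M_1)$. The element $\psi_v:=\partial_v\alpha_1\cup\gamma_v$ satisfies $d\psi_v=\partial_v(\alpha_1\cup d(\sigma\circ\alpha_2))$ up to a term $\partial_v\alpha_1\cup d(\ldots)$. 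We correct that term using $\iota^\vee(\mathscr{W}^\perp)=\mathscr{W}_1^\perp$: write $\partial_v\alpha_1=\iota^\vee(\epsilon_v)+dh_v$ with $[\epsilon_v]\in\mathscr{W}_v^\perp$. We then define
\begin{align*}
\mathrm{CTP}_E(a,b):=\sum_{v}\mathrm{inv}_v\left(\partial_v\varphi-\psi_v\right).
\end{align*}
This is the BF functional $\mathrm{BF}_{X_S}$ measured against the canonical local sections that the Selmer conditions determine. For almost all $v$, everything is unramified and the local term is $\varphi_v$ from \eqref{FVS-BF-functional}. By \eqref{local-HGnr-vanishing} it has invariant $0$, which makes the sum finite; this is the same mechanism as in Theorem \ref{Decomp-formula}(2).

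The key steps are the following.
\begin{enumerate}
\item Show that $\partial_v\varphi-\psi_v$ is a $2$-cocycle.
\item Show independence from all choices: $\alpha_i$, $\sigma$, $\varphi$, $\beta_v$, $g_v$, $h_v$.
\item Prove bilinearity.
\item Compute the kernels.
\end{enumerate}
Independence of $\alpha_i$ and $\sigma$ goes through the functor $\Theta_x$ of Lemma \ref{cup-functor}. Changing $\varphi$ by a global $2$-cocycle does not change the sum, by the reciprocity law $\sum_v\mathrm{inv}_v=0$ (equivalently \eqref{Global-H3-inv}). Changing $\beta_v$ within $\mathscr{W}_v$ changes the local term by a local Tate pairing of an element of $\mathscr{W}_v$ with $[\epsilon_v]\in\mathscr{W}_v^\perp$, which vanishes. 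Bilinearity then follows from the additivity already proved in Lemma \ref{cup-functor}.

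For the kernels, first take $b=\pi(c)$ with $c\in\mathrm{Sel}(M,\mathscr{W})$. We can take $\sigma\circ\alpha_2$ to be a global cocycle lifting $c$, so $\varphi=0$, and each local term becomes a Tate pairing of $\mathscr{W}$ against $\mathscr{W}^\perp$, which is zero. Symmetrically, $\iota^\vee(\mathrm{Sel}(M^\vee,\mathscr{W}^\perp))$ lies in the right kernel. For the reverse inclusions we would identify $\mathrm{CTP}_E(a,b)$ with $\int\alpha_1\cup\delta\beta$ through a Poitou--Tate argument. This is the compactly supported pairing \eqref{D1D1-pairing} after enlarging to $Y_T$ as in Theorem \ref{Decomp-formula}(2). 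Nondegeneracy modulo the stated kernels would then come from Poitou--Tate exactness combined with the perfect pairing of the duality lemma, applied to the long exact sequences of Selmer groups attached to $E$ and its dual.

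We expect this last step to be the main obstacle. It requires showing that if $\mathrm{CTP}_E(a,-)\equiv0$, then $a$ lifts to $\mathrm{Sel}(M^\vee,\mathscr{W}^\perp)$, which needs a nine-term Poitou--Tate sequence adapted to general Selmer conditions. The first thing we would try is to deduce it from \cite[Theorem 2.1]{MS24}-style arguments via the dual sequence and local Tate duality. We also expect careful sign bookkeeping, as flagged in the footnote on \cite[p.65]{Mil06}, to be a source of trouble.
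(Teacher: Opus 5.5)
\textbf{Main gap: the kernels.} The paper gives no proof of this statement. It quotes \cite[Theorem 2.1]{MS24} and recalls only the construction \eqref{gv} and the independence of choices. Your construction is essentially that one, with $f=\sigma\circ\alpha_2$ and $\epsilon=\varphi$. But the substantive half of the theorem is left as a plan: that the kernels are \emph{exactly} $\pi(\mathrm{Sel}(M,\mathscr{W}))$ and $\iota^\vee(\mathrm{Sel}(M^\vee,\mathscr{W}^\perp))$. The route you name does not reach it. The pairing \eqref{D1D1-pairing} and the duality lemma for $D^1_{\mathrm{fppf}}$ concern one particular Selmer structure: zero off $Y$, unramified on $Y$. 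In general $\mathscr{W}_1=\iota^{-1}(\mathscr{W})$ and $\mathscr{W}_1^\perp$ are not of that shape; even if $\mathscr{W}_x=0$ for $x\notin Y$, one gets $\mathscr{W}_{1,x}=\delta H^0(K_x,M_2)$, which need not vanish.

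What is missing is a two-step Poitou--Tate argument. Suppose $\mathrm{CTP}_E(a,b)=0$ for all $a\in\mathrm{Sel}(M_1^\vee,\mathscr{W}_1^\perp)$.

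(i) Each $\partial_vb\in\pi(\mathscr{W}_v)$ lifts locally, so $\delta b\in\Sha^2(K,M_1)$. For $a\in\Sha^1(K,M_1^\vee)$, write $\partial_v\alpha_1=dh_v$. The local cocycle of \eqref{gv} becomes $\partial_v\epsilon-h_v\cup\partial_v(\iota^{-1}df)$ modulo coboundaries. So $\mathrm{CTP}_E(a,b)$ is, up to sign, the Poitou--Tate pairing of $a$ with $\delta b$, and $\delta b=0$ by perfectness of $\Sha^1(K,M_1^\vee)\times\Sha^2(K,M_1)\to\mathbb{Q}/\mathbb{Z}$.

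(ii) Lift $b$ to $c_0\in H^1(K,M)$ and take $f$ a cocycle representing it, so $\epsilon=0$. Write $w_v-\partial_vc_0=\iota(x_v)$ with $w_v\in\mathscr{W}_v$ and $\pi(w_v)=\partial_vb$. The family $x=(x_v)_v$ is well defined modulo $\mathscr{W}_1$, and $\mathrm{CTP}_E(a,b)=\pm\sum_v\langle\partial_va,x_v\rangle_v$. Three facts then give $x\in\im H^1(K,M_1)+\mathscr{W}_1$:
\begin{itemize}
\item vanishing of this sum on $\mathrm{Sel}(M_1^\vee,\mathscr{W}_1^\perp)$;
\item the Poitou--Tate fact that the images of $H^1(K,M_1)$ and $H^1(K,M_1^\vee)$ in the restricted products are exact mutual annihilators;
\item openness of $\mathscr{W}_1$, so that $\im H^1(K,M_1)+\mathscr{W}_1$ is closed.
\end{itemize}
Correcting $c_0$ by $\iota$ of the global part yields a lift of $b$ in $\mathrm{Sel}(M,\mathscr{W})$.

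The other kernel needs the dual argument with $\delta^\vee a\in\Sha^2(K,M_2^\vee)$. It also needs a cochain computation showing that $\mathrm{CTP}_E(a,b)$ can equally be computed from a lift of $a$ to $M^\vee$; this is where $\iota^\vee(\mathscr{W}^\perp)=\mathscr{W}_1^\perp$ and $(\pi^\vee)^{-1}(\mathscr{W}^\perp)=\mathscr{W}_2^\perp$ are used. The definition lifts $b$ and not $a$, so ``symmetrically'' does not supply this.

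\textbf{Errors in the part you carry out.} With your $\gamma_v\in C^1(G_{K,v},M_1)$ (section minus lift), the Leibniz rule gives exactly $d\psi_v=-\partial_v\alpha_1\cup d\gamma_v=-\partial_v(\alpha_1\cup d(\sigma\circ\alpha_2))$. So $\partial_v\varphi-\psi_v$ is not a cocycle; you need $\partial_v\varphi+\psi_v$, i.e.\ the lift-minus-section convention of \eqref{gv}. No extra term $\partial_v\alpha_1\cup d(\cdots)$ arises, and your $\epsilon_v,h_v$ never enter the formula. Independence of the local lift only uses $[\partial_v\alpha_1]\in\mathscr{W}_{1,v}^\perp$ against $\iota^{-1}(\mathscr{W}_v)=\mathscr{W}_{1,v}$.

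Restricting to lifts of the form $\sigma\circ\alpha_2$ is also too rigid, in two places:
\begin{itemize}
\item Additivity in $b$ does not follow from Lemma \ref{cup-functor}, which concerns changes of $\sigma$ and gauge transformations, because $\sigma\circ(\alpha_2+\alpha_2')\neq\sigma\circ\alpha_2+\sigma\circ\alpha_2'$.
\item Your claim that $\sigma\circ\alpha_2$ can be taken to be a cocycle lifting $c$ (so $\varphi=0$) is false in general. Take trivial action, $M=\mathbb{Z}/4$, $M_1=2\mathbb{Z}/4$. No surjective homomorphism $G_K\to\mathbb{Z}/4$ is of the form $\sigma\circ\alpha_2$, since the latter takes at most two values.
\end{itemize}
Both are repaired by proving independence of an arbitrary cochain lift $f\in C^1(G_K,M)$ of $\alpha_2$. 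This is exactly the freedom built into the construction recalled before \eqref{gv}.
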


To define the Cassels-Tate pairing $\mathrm{CTP}(\rho_2, \rho_1)$, Morgan-Smith make `global' choices consisting of 
\begin{enumerate}
    \item cocycles $(\alpha_1, \alpha_2)$ representing $(\rho_1, \rho_2)$,
    \item a cochain $f \in C^1(G_K, M)$ satisfying $\alpha_2 = \pi \circ f$, and
    \item a cochain $\epsilon \in C^2(G_K, (K^{\mathrm{sep}})^\times)$ such that
    \begin{align*}
        d\epsilon =\alpha_1  \cup d f =  \alpha_1 \cup \iota^{-1}(df).
    \end{align*}
\end{enumerate}
They further make `local' choices consisting of
\begin{enumerate}
    \item cocycles $\alpha_{v,2,M}$ in $Z^1(G_{K,v}, M)$ for each place $v$ of $K$ such that $\pi(\alpha_{v,2,M})=\alpha_{v,2}$ and $(\alpha_{v,2,M})_v$ represent a class in $\mathscr{W}$.
\end{enumerate}
Then
\begin{align}\label{gv}
        \gamma_v :=   \partial_v\epsilon- \partial_v\alpha_{1} \cup (\alpha_{v,2,M}-\sigma \circ \partial_v\alpha_{2} )  ,
\end{align}
where $\sigma\in\mathrm{Set}(M_2,M)$ such that $ \pi\circ\sigma=\mathrm{Id}_{M_2}$, lies in $Z^2(G_{K,v}, (K^{\mathrm{sep}})^\times)$ for each $\nu \in X^{\mathrm{cl}}$, and 
\begin{align*}
    \mathrm{CTP}_E(\rho_1, \rho_2) := \sum_{v \in X^{\mathrm{cl}}} \mathrm{inv}_v (\gamma_v).
\end{align*}
This definition is independent of these choices: it is independent of  $\epsilon$ such that $d\epsilon =\alpha_1  \cup d f$; it is independent of the choice of cocycles $(\alpha_1, \alpha_2)$ that represent the cohomology classes $(\rho_1, \rho_2)$; 
it is independent of the choice of $f$ such that $\pi(f) = \alpha_2$;
it is independent of the choice of $\alpha_{v,2,M} \in Z^1(G_{K,v}, M)$ such that $\pi(\alpha_{v,2,M}) = \alpha_{v,2}$ and its class is in $\mathscr{W}_v$;
it is independent of the set-theoretic lift $\sigma$ such that $\pi \circ \sigma =Id_{M_2}$.
This independence of the Cassels-Tate pairing from its auxiliary choices relies on the global reciprocity law, which states that for any global class $c \in H^2(G_K, \mathbb{G}_m) \cong \mathrm{Br}(K)$, the sum of its local invariants is zero:
$$\sum_{v \in X^{\mathrm{cl}}} \mathrm{inv}_v(\partial_v c) = 0.$$

Recall that $Y\subseteq X$ is an open subset and that we have the following input data of the arithmetic BF theory. 
\begin{itemize}
    \item An exact sequence \eqref{BF-input} of nice commutative group schemes over $Y$:
    \begin{align*}
        \xymatrix{0 \ar[r] & \mathcal{M}_1 \ar[r] & \mathcal{M} \ar[r] & \mathcal{M}_2 \ar[r] & 0\rlap{\ .}}
    \end{align*}
    \item A subgroup of ``boundary conditions''
    \begin{align*}
        \mathrm{BC}=\prod_{x\in X^\mathrm{cl}}\mathrm{BC}_x\subseteq\prod_{x\in X^\mathrm{cl}}\mathscr{F}_x:=\prod_{x\in X^\mathrm{cl}}H^1(G_{K,x},M_1^\vee)\times H^1(G_{K,x},M_2)
    \end{align*}
    such that $\im(\mathscr{F}_y^\mathrm{nr}\rightarrow\mathscr{F}_y)\subseteq\mathrm{BC}_y\subseteq\mathscr{F}_y$ for $y\in Y^\mathrm{cl}$.
\end{itemize}
For a comparison of the arithmetic BF functional with the Cassels-Tate pairing, we assume that, after evaluating $K^\mathrm{sep}$ to \eqref{BF-input}, we get the same exact sequence as \eqref{CTexact}:
    \begin{align*}
    \xymatrix{0 \ar[r] & M_1 \ar[r]^-\iota & M \ar[r]^-\pi & M_2 \ar[r] & 0\rlap{\ ,}}
    \end{align*}
and $\displaystyle\prod_{x\in X^\mathrm{cl}}\mathrm{BC}_x=\iota^{-1}(\mathscr{W}) \times\pi(\mathscr{W}).$
Under this assumption, we would like to compare the arithmetic BF theory on $\mathscr{F}(X_S)$ (defined in \eqref{FXS-defn}) and the Cassels-Tate pairing.

\begin{comment}
and there exists a subgroup of ``boundary conditions''
\begin{eqnarray}\label{BCM}
\mathrm{BC}_M=\prod_{x \in X^{\mathrm{cl}}} \mathrm{BC}_{M,x} \subseteq \prod_{x \in X^{\mathrm{cl}}} H^1(G_{K,x}, M)
\end{eqnarray}
such that 
\begin{eqnarray}
    \iota^{-1}(\mathrm{BC}_M) \times \pi(\mathrm{BC}_M) \simeq \mathrm{BC}.
\end{eqnarray}
$H^1(G_{K,y}^{\mathrm{nr}}, M^{I_{K,y}}) \subseteq \mathrm{BC}_{M,y}$ for $y \in Y^{\mathrm{cl}}$ and
\begin{eqnarray*}
    \mathrm{BC}_x =\iota^{\vee}(\mathrm{BC}_{M,x}^{\perp}) \times  \pi (\mathrm{BC}_{M,x})\subseteq \mathscr{F}_x:=H^1(G_{K,x},M_1^\vee)\times H^1(G_{K,x},M_2)
\end{eqnarray*}
for all $x \in X^{\mathrm{cl}}.$ (Denote by $\mathrm{BC}_{M,x}^{\perp}$ the orthogonal complement of $\mathrm{BC}_{M,x}$ in $H^1(G_{K,x}, M)$ under the local Tate pairing.)
\end{comment}


\subsection{Global arithmetic BF functional and the Cassels-Tate pairing}
We assume that $S=\emptyset$ and so 
\begin{align*}
    \mathrm{BC}(\partial X_\emptyset):=\prod_{x\in X\setminus Y}\{0\}\times\prod_{y\in Y_S^\mathrm{cl}}\mathscr{F}_y^\mathrm{nr}\rlap{\ .}
\end{align*}
where we recall that
\begin{align*}
    \mathscr{F}_x^\mathrm{nr}&:=H^1(G_{K,x}^\mathrm{nr},(M_1^\vee)^{I_{K,x}})\times H^1(G_{K,x}^\mathrm{nr},\pi(M^{I_{K,x}}))\rlap{\ ,} \quad x \in X^{\mathrm{cl}},
\end{align*}
and the space of fields $\mathscr{F}(X_\emptyset)$ on $X=X_\emptyset$ is the pullback
\begin{align*}
\begin{aligned}
    \xymatrix{
    \mathscr{F}(X) =\mathscr{F}(X_\emptyset) \ar[d] \ar[r] & H^1(G_K,M_1^\vee)\times H^1(G_K,M_2) \ar[d] \\
    \mathrm{BC}(\partial X_\emptyset) \ar[r] & \displaystyle\prod_{x\in X^\mathrm{cl}}\mathscr{F}_x\rlap{\ .}
    }
\end{aligned}
\end{align*}

Let
\begin{eqnarray*}
    \mathscr{W}=\prod_{x \in X \setminus Y}\{0\}\times \prod_{y\in Y^{\mathrm{cl}}}\mathscr{W}_y,
    \quad \mathscr{W}_y =H^1(G_{K,y}^{\mathrm{nr}}, M^{I_{K,y}}), \quad y \in Y^{\mathrm{cl}}.
\end{eqnarray*}

\begin{lem} We have
    \begin{eqnarray*}
        \pi(\mathscr{W}_y) =H^1(G_{K,x}^\mathrm{nr},\pi(M^{I_{K,x}})), \quad y \in Y^{\mathrm{cl}}.
    \end{eqnarray*}
\end{lem}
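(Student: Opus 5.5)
The claim is that the map $\pi_\ast:H^1(G_{K,y}^\mathrm{nr},M^{I_{K,y}})\rightarrow H^1(G_{K,y}^\mathrm{nr},\pi(M^{I_{K,y}}))$ is surjective. Here $\pi(\mathscr{W}_y)$ is read inside $H^1(G_{K,y},M_2)$ through inflation, and the target is likewise embedded by inflation. Since inflation commutes with $\pi_\ast$, it suffices to prove this surjectivity at the level of unramified cohomology. I plan to do this using the cohomological dimension of $G_{K,y}^\mathrm{nr}\cong\widehat{\mathbb{Z}}$.

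\textbf{Step 1.} Restricting $\pi$ to $I_{K,y}$-invariants gives a short exact sequence of finite discrete $G_{K,y}^\mathrm{nr}$-modules
\begin{align*}
    \xymatrix{0 \ar[r] & M_1^{I_{K,y}} \ar[r] & M^{I_{K,y}} \ar[r]^-\pi & \pi(M^{I_{K,y}}) \ar[r] & 0\rlap{\ .}}
\end{align*}
Exactness on the left and in the middle holds because $\iota$ is injective and $\ker(\pi|_{M^{I_{K,y}}})=\iota(M_1)\cap M^{I_{K,y}}=\iota(M_1^{I_{K,y}})$. Exactness on the right holds by definition of the third term. This is exactly why the paper uses $\pi(M^{I_{K,y}})$ rather than $M_2^{I_{K,y}}$.

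\textbf{Step 2.} Take the long exact sequence in $G_{K,y}^\mathrm{nr}$-cohomology:
\begin{align*}
    \xymatrix{H^1(G_{K,y}^\mathrm{nr},M^{I_{K,y}}) \ar[r]^-{\pi_\ast} & H^1(G_{K,y}^\mathrm{nr},\pi(M^{I_{K,y}})) \ar[r]^-\delta & H^2(G_{K,y}^\mathrm{nr},M_1^{I_{K,y}})\rlap{\ .}}
\end{align*}
Since $G_{K,y}^\mathrm{nr}\cong\widehat{\mathbb{Z}}$ has cohomological dimension $1$ on torsion modules (see the reference to \cite[Proposition 7.1.2]{NSW08} and the example on p.173 already used for \eqref{local-HGnr-vanishing}), $H^2(G_{K,y}^\mathrm{nr},M_1^{I_{K,y}})=0$. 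Here $M_1^{I_{K,y}}$ is finite, so this vanishing applies. Hence $\pi_\ast$ is surjective.

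This vanishing is the only substantive step. If one wants a direct argument instead of citing cohomological dimension, one can use that for a finite $\widehat{\mathbb{Z}}$-module $A$, $H^2(\widehat{\mathbb{Z}},A)\cong\varinjlim H^2(\mathbb{Z}/n,A^{n\mathbb{Z}})$. Each such Tate cohomology group is killed in the limit by inflation, or more simply one uses $\mathrm{scd}_\ell\widehat{\mathbb{Z}}=2$ together with $\mathrm{cd}=1$.

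\textbf{Step 3.} Finally, transport the statement into $H^1(G_{K,y},M_2)$. The square formed by the two inflation maps and the two maps $\pi_\ast$ commutes, by naturality of inflation in the coefficient module. So
\begin{align*}
    \pi(\mathscr{W}_y)=\mathrm{inf}\bigl(\pi_\ast H^1(G_{K,y}^\mathrm{nr},M^{I_{K,y}})\bigr)=\mathrm{inf}\bigl(H^1(G_{K,y}^\mathrm{nr},\pi(M^{I_{K,y}}))\bigr),
\end{align*}
which is the asserted equality, with the right-hand side identified with its injective image under inflation as in \eqref{Fxnr-to-Fx}.
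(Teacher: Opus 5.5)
Your proof is correct and follows the paper's argument: the short exact sequence $0\to M_1^{I_{K,y}}\to M^{I_{K,y}}\to\pi(M^{I_{K,y}})\to 0$ of $G_{K,y}^\mathrm{nr}$-modules, its long exact sequence, and the vanishing of $H^2(G_{K,y}^\mathrm{nr},M_1^{I_{K,y}})$ because $G_{K,y}^\mathrm{nr}\cong\widehat{\mathbb{Z}}$ has cohomological dimension $1$. One small correction to your Step 3: the composite $H^1(G_{K,y}^\mathrm{nr},\pi(M^{I_{K,y}}))\to H^1(G_{K,y},M_2)$ from \eqref{Fxnr-to-Fx} need not be injective when $\pi(M^{I_{K,y}})\neq M_2^{I_{K,y}}$, as the paper itself remarks, so calling it an injective image is inaccurate; your equality of images does not use injectivity, however, so the conclusion stands.
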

\begin{proof}
We have an exact sequence of $G_{K,y}^{\mathrm{nr}}$-modules:
\begin{align*}
    \xymatrix{0 \ar[r] & M_1^{I_{K,y}} \ar[r] & M^{I_{K,y}} \ar[r] & \pi(M_2^{I_{K,y}}) \ar[r] & 0\rlap{\ .}}
\end{align*}
This induces an exact sequence:
\begin{align*}
    \xymatrixcolsep{0.8pc}\xymatrix{\cdots \ar[r] & \mathscr{W}_y=H^1(G_{K,y}^\mathrm{nr},M^{I_{K,y}}) \ar[r]^-\pi & H^1(G_{K,y}^\mathrm{nr},\pi(M^{I_{K,y}})) \ar[r] & H^2(G_{K,y}^\mathrm{nr},M_1^{I_{K,y}}) \ar[r] & \cdots\rlap{\ .}}
\end{align*}
Since $G_{K,y}^{\mathrm{nr}}\simeq \hat{\mathbb{Z}}$, its cohomological dimension is 1 (see \cite[p.173, Example]{NSW08}). Thus, $H^2(G_{K,y}^{\mathrm{nr}}, M_1^{K_{K,y}})=0$, since $M_1^{K_{K,y}}$ is torsion. This implies the desired result.
\end{proof}
But note that $\iota^{-1} (\mathscr{W}_y)$ is not necessarily the same as $H^1(G_{K,x}^\mathrm{nr},M_1^{I_{K,y}})$.
The sequence
$$0 \to M_1^{I_{K,y}} \to M^{I_{K,y}} \to M_2^{I_{K,y}}$$
is not necessarily surjective on the right. If it is surjective, i.e. $\pi(M^{I_{K,y}})=M_2^{I_{K,y}}$ (note that this is a sufficient condition for \eqref{FX-to-D1D1} in Lemma \ref{FX} to hold), then $\iota^{-1}(\mathscr{W}_y)$ becomes exactly $H^1(G_{K,y}^{\mathrm{nr}}, M_1^{I_{K,y}})$, and by Tate local duality, its orthogonal complement $\iota^{-1}(\mathscr{W}_y)^{\perp}$ is the unramified subgroup of the dual $H^1(G_{K,y}^{\mathrm{nr}}, (M_1^\vee)^{I_{K,y}})$.

If we assume that $\iota^{-1}(\mathscr{W}_y)^{\perp} = H^1(G_{K,y}^{\mathrm{nr}}, (M_1^\vee)^{I_{K,y}})$ for $y \in Y^{\mathrm{cl}},$ then we show that the Cassels-Tate pairing for finite Galois modules can be naturally interpreted as the arithmetic BF functional $\mathrm{BF}_X:\mathscr{F}(X) \to \mathbb{Q}/\mathbb{Z}$ using the decomposition formula {\it (2)} of Theorem \ref{Decomp-formula}.

\begin{prop} \label{ctpbf}
Assume that
\begin{eqnarray}
    \iota^{-1}(\mathscr{W}_y)^{\perp}\times \pi(\mathscr{W}_y)= \mathscr{F}_y^\mathrm{nr}
    \quad y \in Y^{\mathrm{cl}}.
\end{eqnarray}
Then $\mathrm{Sel} (M_2,\mathscr{W}_2) \times \mathrm{Sel} (M_1^\vee, \mathscr{W}_1^\perp)$ is identified with $\mathscr{F}(X)$ and we have
    \begin{eqnarray*}
        \mathrm{CTP}_E(\rho_1, \rho_2)= \mathrm{BF}_X(\rho_1, \rho_2).
    \end{eqnarray*}
\end{prop}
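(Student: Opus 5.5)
First identify the spaces of fields. By definition, $\mathscr{F}(X)$ is the pullback of $H^1(G_K,M_1^\vee)\times H^1(G_K,M_2)\to\prod_x\mathscr{F}_x$ along $\mathrm{BC}(\partial X)=\prod_{x\in X\setminus Y}\{0\}\times\prod_{y\in Y^\mathrm{cl}}\mathscr{F}_y^\mathrm{nr}$. The hypothesis gives $\mathscr{F}_y^\mathrm{nr}=\iota^{-1}(\mathscr{W}_y)^\perp\times\pi(\mathscr{W}_y)$ for $y\in Y^\mathrm{cl}$. For $x\in X\setminus Y$ we have $\mathscr{W}_x=0$, so $\mathscr{W}_{2,x}=\pi(\mathscr{W}_x)=0$; and $\mathscr{W}_{1,x}=\iota^{-1}(0)=\ker(H^1(G_{K,x},M_1)\to H^1(G_{K,x},M))$. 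I must check that its annihilator is $0$, or else adopt the convention (implicit in $\mathrm{BC}$ as stated) that the local condition for $M_1^\vee$ at $X\setminus Y$ is $\{0\}$. I will record this as the reading of the hypothesis. The pullback then splits componentwise into $\mathrm{Sel}(M_1^\vee,\mathscr{W}_1^\perp)\times\mathrm{Sel}(M_2,\mathscr{W}_2)$, because both the global group and the local conditions are products.

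Next compute $\mathrm{CTP}_E$ with adapted choices, and compare it with the decomposition formula, Theorem \ref{Decomp-formula}(2). Take a finite $T\supseteq X\setminus Y$ such that the order of $\mathcal{M}$ is invertible on $Y_T$ and all cocycles are unramified outside $T$. Then $\mathrm{BF}_X(\rho)=\mathrm{BF}_{X_T}(\rho)-\mathrm{BF}_T^\mathrm{nr}(\partial_T^\mathrm{nr}\rho)=\sum_{x\in T}\mathrm{inv}_x(\partial_x\varphi-\varphi_x)$, with $d\varphi=\alpha_1\cup d(\sigma\circ\alpha_2)$. On the Cassels--Tate side, choose $f=\sigma\circ\alpha_2$. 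Then $\pi\circ f=\alpha_2$ and $\epsilon=\varphi$ is admissible, so $\gamma_v=\partial_v\varphi-\partial_v\alpha_1\cup(\alpha_{v,2,M}-\sigma\circ\partial_v\alpha_2)$.

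For $v\in X\setminus Y$, the class $\partial_v\rho$ vanishes. I will choose $\alpha_{v,2,M}=d(\sigma\circ\mu_{2,v})$ and use coboundary representatives as in the proof of Theorem \ref{Decomp-formula}(2). For $v\in Y^\mathrm{cl}$, I will take $\alpha_{v,2,M}$ to be an unramified lift $\alpha_{v,2,M}\in Z^1(G^\mathrm{nr}_{K,v},M^{I_{K,v}})$, which exists by the lemma $\pi(\mathscr{W}_y)=H^1(G^\mathrm{nr},\pi(M^{I}))$. The correction term $\partial_v\alpha_1\cup(\alpha_{v,2,M}-\sigma\circ\partial_v\alpha_2)$ then has coboundary $\partial_v\alpha_1\cup d(\sigma\circ\partial_v\alpha_2)$ and lives in unramified cochains with values in $\mathcal{O}^{\mathrm{nr},\times}$. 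By the uniqueness coming from \eqref{local-HGnr-vanishing}, it equals $\varphi_v$ modulo coboundaries, after replacing $\alpha$ by its unramified representative via a gauge transformation $\Theta_v$. Hence $\mathrm{inv}_v\gamma_v=\mathrm{inv}_v(\partial_v\varphi-\varphi_v)$ for $v\in T$, and for $v\notin T$ both $\gamma_v$ and the BF term are unramified classes with invariant $0$. Summing over $v$ gives $\mathrm{CTP}_E(\rho_1,\rho_2)=\mathrm{BF}_X(\rho_1,\rho_2)$, up to the sign convention, which I will check against the $-1$ in \eqref{D1D1-pairing} and the footnote correction.

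The main obstacle is this local comparison. Morgan--Smith use fixed global cocycles restricted to $G_{K,v}$, while $\varphi_v$ is built from chosen unramified representatives. Relating them requires the gauge action $\Theta_v$ of Lemma \ref{cup-functor}: I must check that translating $\partial_v\varphi$ by $\Theta_v(0,g_1,g_2)$ changes $\gamma_v$ only by a coboundary. I would verify this first by the explicit formula in Lemma \ref{cup-functor}, using that $\mathrm{inv}_v$ kills $B^2$. The independence of the CTP from its choices, established in \cite{MS24}, guarantees that the special choices above are legitimate.
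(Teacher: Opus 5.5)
Your proposal is correct and is essentially the paper's proof: you take $f=\sigma\circ\alpha_2$ and $\epsilon=\varphi$, identify each local correction term $\partial_v\alpha_1\cup(\alpha_{v,2,M}-\sigma\circ\partial_v\alpha_2)$ with $\varphi_v^{\mathrm{nr}}$ via the uniqueness coming from \eqref{local-HGnr-vanishing}, and conclude with Theorem~\ref{Decomp-formula}(2). Your extra care, which the paper skips, is well placed. The convention at $X\setminus Y$ is genuinely needed, since $\mathscr{W}_{1,x}^\perp=\iota^\vee H^1(G_{K,x},M^\vee)$ need not vanish. The gauge transport also works: with $\beta=\partial_v\alpha+dg$ and lift $\alpha_{v,2,M}+d(\sigma\circ g_2)$, the local term differs from the transported one by the coboundary $d\bigl(g_1\cup(\alpha_{v,2,M}-\sigma\circ\partial_v\alpha_2)\bigr)$. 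This holds provided you use $\Theta_v(0,g_1,g_2)=g_1\cup d(\sigma\circ\partial_v\alpha_2)+\beta_1\cup\bigl(d(\sigma\circ g_2)+\sigma\circ\partial_v\alpha_2-\sigma\circ\beta_2\bigr)$ rather than the displayed formula of Lemma~\ref{cup-functor}, whose signs are off.
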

\begin{proof}
Given $\rho=(\rho_1,\rho_2)\in\mathscr{F}(X)$, choose a representative of its image under \eqref{FXS-defn}:
\begin{align*}
    \alpha=(\alpha_1,\alpha_2)\in Z^1(G_K,M_1^\vee)\times Z^1(G_K,M_2)
\end{align*}

By \eqref{BFnrx-lift}, for each $x \in Y^{\mathrm{cl}}$,
\begin{align*}
    d\varphi_x^\mathrm{nr}=\partial_x\alpha_{1}\cup d(\sigma\circ\partial_x\alpha_{2})\quad\textrm{for some}\quad\varphi_x^\mathrm{nr}\in\frac{C^2(G_{K,x}^\mathrm{nr},\mathcal{O}_x^{\mathrm{nr},\times})}{B^2(G_{K,x}^\mathrm{nr},\mathcal{O}_x^{\mathrm{nr},\times})}\rlap{\ .}
\end{align*}
For each $x \in X^{\mathrm{cl}}$, we have
\begin{eqnarray*}
    d\left( \partial_x\alpha_{1} \cup( \alpha_{x,2,M}-\sigma \circ \partial_x\alpha_{2} )\right) = \partial_x\alpha_{1}\cup d(\sigma\circ \partial_x\alpha_{2}).
\end{eqnarray*}
Thus, \eqref{local-HGnr-vanishing} implies that
\begin{eqnarray}\label{sectionunr}
    \partial_x\alpha_{1} \cup( \alpha_{x,2,M}-\sigma \circ \partial\alpha_{2}) - \varphi_x^{\mathrm{nr}} \in B^2(G_{K,x}, \mathscr{O}_x^{\mathrm{nr}, \times}) 
\end{eqnarray}
is a coboundary.
Note that $d\epsilon = \alpha_1\cup d f  $ and we choose $\sigma \in \mathscr{R}$ such that $f=\sigma \circ \alpha_2$ and choose $\varphi \in C^2(G_K, K^{\mathrm{sep},\times})/B^2(G_K, K^{\mathrm{sep},\times})$ (as in \eqref{FUS-BF-functional}) such that 
\begin{eqnarray}\label{globalchoice}
   d \varphi = \alpha_1 \cup d(\sigma \circ \alpha_2)=d\epsilon. 
\end{eqnarray}

Then we have the following equality in $\mathbb{Q}/\mathbb{Z}$:
\begin{eqnarray*}
    \mathrm{CTP}_E(\rho_1, \rho_2) &:=& \sum_{v \in X^{\mathrm{cl}}} \mathrm{inv}_v (\gamma_v)\\
   &=& \sum_{v \in X^{\mathrm{cl}}} \mathrm{inv}_v \left(\partial_x\epsilon- \partial_x\alpha_{1} \cup( \alpha_{x,2,M}-\sigma \circ \partial_x\alpha_{2} ) \right)\\
   &\stackrel{\eqref{globalchoice}}{=}& \sum_{v \in X^{\mathrm{cl}}} \mathrm{inv}_v \left(\partial_x\varphi- \partial_x\alpha_{1}\cup ( \alpha_{x,2,M} -\sigma \circ \partial_x\alpha_{2} )\right)\\
   &\stackrel{\eqref{sectionunr}}{=}&\sum_{x\in T}\mathrm{inv}_x(\partial_x\varphi-\varphi_x^{\mathrm{nr}})\quad \text{ for some finite set $T$ of places}\\
   &=&\mathrm{BF}_{X_T}(\rho)-\mathrm{BF}_T^\mathrm{nr}(\partial^\mathrm{nr}_T\rho)\rlap{\ }
   \stackrel{\mathrm{Theorem}\ \ref{Decomp-formula}, (2)}{=} \mathrm{BF}_X(\rho_1, \rho_2).
\end{eqnarray*}
\end{proof}

\begin{rmk}
The assignment in \eqref{gv}
    $$(\rho_1,\rho_2)=([\alpha_1], [\alpha_2]) \mapsto \xi(\rho_1,\rho_2):= \partial_x\alpha_{1} \cup ( \alpha_{x,2,M}-\sigma \circ \partial_x\alpha_{2} )  $$
    gives a section to the $\mathbb{Q}/\mathbb{Z}$-torsor $\varpi_x:\mathscr{L}_x \to \mathscr{F}_x.$, i.e. an element $\xi$ in $\Gamma(\mathscr{F}_x, \mathscr{L}_x)$. By using this and \eqref{scrLS-trivialization}, one can obtain a similar relationship between the Cassels-Tate pairing and the arithmetic BF functional $\mathrm{BF}_{X_S}^{\xi}$ in\eqref{secBF}, when $S \neq \emptyset$.
\end{rmk}

\begin{comment}
\subsection{Global arithmetic BF functional with boundary}
We deal with the case $S\neq \emptyset$: we have
\begin{align*}
    \mathrm{BC}(\partial X_S)=\prod_{x\in X^\mathrm{cl}}\mathrm{BC}(\partial X_S)_x:=\prod_{x\in X_S\setminus Y}\{0\}\times\prod_{x\in S}\mathrm{BC}_x\times\prod_{y\in Y_S^\mathrm{cl}}\mathscr{F}_y^\mathrm{nr}\rlap{\ ,}
\end{align*}
and the space of fields $\mathscr{F}(X_S)$ on $X_S$ is the pullback
\begin{align*}
\begin{aligned}
    \xymatrix{
    \mathscr{F}(X_S) \ar[d] \ar[r] & H^1(G_K,M_1^\vee)\times H^1(G_K,M_2) \ar[d] \\
    \mathrm{BC}(\partial X_S) \ar[r] & \displaystyle\prod_{x\in X^\mathrm{cl}}\mathscr{F}_x\rlap{\ .}
    }
\end{aligned}
\end{align*}
Let
\begin{eqnarray*}
    \mathscr{W}=\prod_{x \in X_S \setminus Y}\{0\}\times \prod_{x \in S} \mathscr{W}_x \times \prod_{y\in Y_S^{\mathrm{cl}}}H^1(G_{K,y}^{\mathrm{nr}}, M^{I_{K,y}}).
\end{eqnarray*}
be a compact subgroup of $\prod'_v H^1(G_{K,v}, M)$ in the theory of the Cassels-Tate pairing.
\begin{prop}
    Assume that
    \begin{eqnarray*}
    \iota^{-1}(\mathscr{W}_x)^{\perp}\times \pi(\mathscr{W}_x)= \mathrm{BC}(\partial X_S)_x, \quad x \in X^{\mathrm{cl}}.
\end{eqnarray*}
Then
\end{prop}
\begin{proof}
The assumption means that
    \begin{eqnarray*}
    \iota^{-1}(\mathscr{W}_y)^{\perp}\times \pi(\mathscr{W}_y)= \mathscr{F}_y^\mathrm{nr},
    \quad y \in Y_S^{\mathrm{cl}}, \quad \iota^{-1}(\mathscr{W}_x)^{\perp}\times \pi(\mathscr{W}_x)=\mathrm{BC}(\partial X_S)_x, \quad x \in S.
\end{eqnarray*}
\end{proof}
\end{comment}

\subsection{Formulas for partition functions}
 We provide a formula for the partition function of Definition \ref{partftn-X}.
 Recall that
\begin{align*}
    Z_X:=\sum_{\rho\in\mathscr{F}(X)}\exp\left(2\pi\sqrt{-1}\mathrm{BF}_X(\rho)\right)\in\mathbb{C}\rlap{\ .}
\end{align*}
 
\begin{prop}\label{onshell}
Under the same assumption as Proposition \ref{ctpbf}, we have
    \begin{align*}
    Z_X = | \pi (\mathrm{Sel}(M,\mathscr{W}))| \cdot |\mathrm{Sel} (M_1^\vee, \mathscr{W}_1^\perp) |.
    \end{align*}
\end{prop}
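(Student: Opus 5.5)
By Proposition \ref{ctpbf}, under the standing assumption $\mathscr{F}(X)$ is identified with $\mathrm{Sel}(M_1^\vee,\mathscr{W}_1^\perp)\times\mathrm{Sel}(M_2,\mathscr{W}_2)$, and $\mathrm{BF}_X(\rho_1,\rho_2)=\mathrm{CTP}_E(\rho_1,\rho_2)$. So the plan is to rewrite
\begin{align*}
    Z_X=\sum_{\rho_1\in\mathrm{Sel}(M_1^\vee,\mathscr{W}_1^\perp)}\ \sum_{\rho_2\in\mathrm{Sel}(M_2,\mathscr{W}_2)}\exp\left(2\pi\sqrt{-1}\,\mathrm{CTP}_E(\rho_1,\rho_2)\right)
\end{align*}
and evaluate it as a character sum. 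The key input is that $\mathrm{CTP}_E$ is bilinear (by \cite[Theorem 2.1]{MS24}, which we take as given). Alternatively, we can get bilinearity directly from the description of $\mathrm{BF}_X$ as the pairing \eqref{D1D1-pairing}, via Lemma \ref{FX}.

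First we sum over $\rho_2$ with $\rho_1$ fixed. The map $\rho_2\mapsto\exp(2\pi\sqrt{-1}\,\mathrm{CTP}_E(\rho_1,\rho_2))$ is a character of the finite group $\mathrm{Sel}(M_2,\mathscr{W}_2)$ (finite by Remark \ref{Sel-finite}). By orthogonality of characters, the inner sum equals $|\mathrm{Sel}(M_2,\mathscr{W}_2)|$ when $\rho_1$ lies in the left kernel, and $0$ otherwise. Hence
\begin{align*}
    Z_X=|\mathrm{Sel}(M_2,\mathscr{W}_2)|\cdot|\text{left kernel of }\mathrm{CTP}_E|.
\end{align*}

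Next we identify the kernel. By \cite[Theorem 2.1]{MS24}, the kernel on the $\mathrm{Sel}(M_2,\mathscr{W}_2)$-side is $\pi(\mathrm{Sel}(M,\mathscr{W}))$, and the kernel on the $\mathrm{Sel}(M_1^\vee,\mathscr{W}_1^\perp)$-side is $\iota^\vee(\mathrm{Sel}(M^\vee,\mathscr{W}^\perp))$. The statement has the form $|\pi(\mathrm{Sel}(M,\mathscr{W}))|\cdot|\mathrm{Sel}(M_1^\vee,\mathscr{W}_1^\perp)|$, which is the full size of one group times the kernel on the other side. So we perform the orthogonality sum in the other order: first over $\rho_1$ for fixed $\rho_2$. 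The inner sum is then $|\mathrm{Sel}(M_1^\vee,\mathscr{W}_1^\perp)|$ if $\rho_2\in\pi(\mathrm{Sel}(M,\mathscr{W}))$ and $0$ otherwise. This gives
\begin{align*}
    Z_X=|\mathrm{Sel}(M_1^\vee,\mathscr{W}_1^\perp)|\cdot|\pi(\mathrm{Sel}(M,\mathscr{W}))|,
\end{align*}
as claimed.

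The main obstacle is bookkeeping of conventions rather than any hard estimate. We must check three things.
\begin{enumerate}
    \item The identification in Proposition \ref{ctpbf} matches the argument order, since the paper writes both $\mathrm{CTP}_E(\rho_2,\rho_1)$ and $\mathrm{CTP}_E(\rho_1,\rho_2)$.
    \item Under the hypothesis, $\mathscr{W}_2=\pi(\mathscr{W})$ and $\mathscr{W}_1^\perp$ agree with the boundary conditions $\mathscr{F}_y^\mathrm{nr}$ at $y\in Y^\mathrm{cl}$ and with $\{0\}$ at $x\in X\setminus Y$, so that the kernel statement of \cite{MS24} applies to exactly the groups indexing $\mathscr{F}(X)$.
    \item The Morgan--Smith kernel $\pi(\mathrm{Sel}(M,\mathscr{W}))$ really lies on the $M_2$-side.
\end{enumerate}
If the kernels turned out to be swapped, the same computation summed in the other order would instead give $|\iota^\vee(\mathrm{Sel}(M^\vee,\mathscr{W}^\perp))|\cdot|\mathrm{Sel}(M_2,\mathscr{W}_2)|$, and we would reconcile this with the claim by a counting identity. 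Nondegeneracy of the induced perfect pairing on the quotients gives $|\mathrm{Sel}(M_1^\vee,\mathscr{W}_1^\perp)|/|\iota^\vee\mathrm{Sel}(M^\vee,\mathscr{W}^\perp)|=|\mathrm{Sel}(M_2,\mathscr{W}_2)|/|\pi\,\mathrm{Sel}(M,\mathscr{W})|$, so the two expressions are equal. This shows the formula is independent of which order we sum in.
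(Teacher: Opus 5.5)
Your proposal is correct and follows essentially the same route as the paper. The paper likewise fixes $\rho_2$ and separates $\rho_2\in\pi(\mathrm{Sel}(M,\mathscr{W}))$ from $\rho_2\notin\pi(\mathrm{Sel}(M,\mathscr{W}))$ using the Morgan--Smith kernel description. It then kills the latter terms because a nontrivial character sum over $\mathrm{Sel}(M_1^\vee,\mathscr{W}_1^\perp)$ vanishes.

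Two of your additions are worthwhile. First, you show the answer does not depend on the summation order, via the counting identity from the perfect pairing on the quotients by the two kernels. This guards against the paper's inconsistent ordering of the arguments of $\mathrm{CTP}_E$. Second, you index the $\rho_2$-sum by $\mathrm{Sel}(M_2,\mathscr{W}_2)$, which is the correct index set where the paper's displayed computation writes $\mathrm{Sel}(M,\mathscr{W})$.
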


\begin{proof}
By \cite[Theorem 2.1]{MS24} and \cite[Proposition 3.4]{MS21}, $\rho_2=\pi(\rho)$ lies in $\pi (\mathrm{Sel}(M,\mathscr{W}))$ if and only if $\mathrm{CTP}_E(\rho_2, \rho_1)=0$ for all $\rho_1 \in \mathrm{Sel} (M_1^\vee, \mathscr{W}_1^\perp)$. By using this and Proposition \ref{ctpbf}, we compute
\begin{align*}
    Z_X&:=\sum_{\rho\in\mathscr{F}(X)}\exp\left(2\pi\sqrt{-1}\mathrm{BF}_X(\rho)\right)\\
        &=\sum_{(\rho_2,\rho_1) \in \mathrm{Sel}(M,\mathscr{W})\times \mathrm{Sel} (M_1^\vee, \mathscr{W}_1^\perp)}\exp\left(2\pi\sqrt{-1}\mathrm{CTP}_E(\rho_1,\rho_2)\right)\\
        &= \sum_{(\rho_2,\rho_1) \in \pi (\mathrm{Sel}(M,\mathscr{W}))\times \mathrm{Sel} (M_1^\vee, \mathscr{W}_1^\perp) }1 \\
        &\quad+\sum_{(\rho_2,\rho_1) \in \mathrm{Sel}(M,\mathscr{W})\setminus \pi (\mathrm{Sel}(M,\mathscr{W}))\times \mathrm{Sel} (M_1^\vee, \mathscr{W}_1^\perp)}\exp\left(2\pi\sqrt{-1}\mathrm{CTP}_E(\rho_1,\rho_2)\right)\\
        &= | \pi (\mathrm{Sel}(M,\mathscr{W}))| \cdot |\mathrm{Sel} (M_1^\vee, \mathscr{W}_1^\perp)|.
\end{align*}
    In the final step, we use the fact that a nontrivial character sum is zero.
\end{proof}

This proposition can be viewed as a generalization of \cite[Proposition 1.2]{CaKi22} and \cite[Proposition 2.3]{PP25}.

\appendix
\section{Cohomology of commutative group schemes}\label{fppf-comparison}

As promised at the beginning of Section \ref{ClassicalBF-SoF}, this appendix gives detailed explanation on the identifications between Galois cohomology and fppf cohomology over fields and discrete valuation rings. Although materials in this section are standard, we collect these for making the article more self-contained.

Let $B$ be a scheme. Since an \'etale covering is an fppf covering, the identity functor on $\mathrm{Sch}_{/B}$ gives a continuous functor between the big sites (cf. \cite[\href{https://stacks.math.columbia.edu/tag/0DDK}{Tag 0DDK}]{Stacks}):
\begin{align*}
    \xymatrix{\mathrm{Id}_{\mathrm{Sch}_{/B}}:(\mathrm{Sch}_{/B})_\mathrm{\acute{e}t} \ar[r] & (\mathrm{Sch}_{/B})_\mathrm{fppf}\rlap{\ .}}
\end{align*}
Given a commutative group scheme $\mathcal{G}$ over $B$, this gives a canonical homomorphism of abelian groups via the Leray spectral sequence:
\begin{align}\label{et-fppf-map}
    \xymatrix{H_\mathrm{\acute{e}t}^r(B,\mathcal{G}) \ar[r] & H_\mathrm{fppf}^r(B,\mathcal{G})\rlap{\ .}}
\end{align}
Although it is not used in this article, see \cite[\href{https://stacks.math.columbia.edu/tag/0757}{Tag 0757}]{Stacks} for the comparison between the small \'etale topos and the big \'etale topos.

\begin{rmk}\label{et-fppf-smqp} By \cite[Theorem III.3.9]{Mil80}, \eqref{et-fppf-map} becomes an isomorphism if $\mathcal{G}$ is smooth and quasi-projective over $B$.
\end{rmk}

\begin{ex} Let $\mathcal{G}$ be a commutative group scheme over an open subset $U\subseteq X$. By construction of compactly supported cohomology on $U$, we get a corresponding canonical homomorphism of abelian groups from \eqref{et-fppf-map}:
\begin{align}\label{et,c-fppf,c-map}
    \xymatrix{H_\mathrm{\acute{e}t,c}^r(U,\mathcal{G}) \ar[r] & H_\mathrm{fppf,c}^r(U,\mathcal{G})\rlap{\ .}}
\end{align}
By Remark \ref{et-fppf-smqp} and the five lemma, \eqref{et,c-fppf,c-map} becomes an isomorphism if $\mathcal{G}$ is smooth and quasi-projective over $U$. In particular, if $\mathcal{G}=\mathbb{G}_m$, then we have the canonical map becomes an isomorphism:
\begin{align*}
    \xymatrix{H_\mathrm{\acute{e}t,c}^r(U,\mathbb{G}_m) \ar[r]^-\sim & H_\mathrm{fppf,c}^r(U,\mathbb{G}_m)}
\end{align*}
which is compatible with \eqref{Global-H3-inv}.
\end{ex}

\begin{ex}\label{et-fppf-small} Let $\mathcal{G}$ be a commutative group scheme over an open subset $Y\subseteq X$. If $K$ is of characteristic $0$, then $\mathcal{G}_K$ is smooth and quasi-projective over $K$ by \cite[\href{https://stacks.math.columbia.edu/tag/047N}{Tag 047N}]{Stacks} and \cite[\href{https://stacks.math.columbia.edu/tag/0BF7}{Tag 0BF7}]{Stacks}. Hence in this case \eqref{et-fppf-map} and \eqref{et,c-fppf,c-map} become isomorphism for every sufficiently small open subset $U\subseteq Y$.
\end{ex}

\begin{lem}\label{localcomparison} Let $V$ be a scheme and $\mathcal{N}$ a commutative finite flat $V$-group scheme. The canonical map coming from \eqref{et-fppf-map}:
\begin{align*}
\xymatrix{H_\mathrm{\acute{e}t}^r(V,\mathcal{N}) \ar[r] & H_\mathrm{fppf}^r(V,\mathcal{N})}
\end{align*}
becomes an isomorphism in each of the following cases.
\begin{quote}
(1) $V=\Spec\Bbbk$ with $\Bbbk$ a field.\\
(2) $V=\Spec R$ with $R$ a discrete valuation ring.
\end{quote}
\end{lem}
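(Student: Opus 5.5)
The plan is to reduce to Remark \ref{et-fppf-smqp}, which says that \eqref{et-fppf-map} is an isomorphism for smooth quasi-projective group schemes. The reduction uses a smooth resolution of $\mathcal{N}$ together with the five lemma.

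\textbf{Step 1 (smooth resolution).} Over $V=\Spec\Bbbk$ or $V=\Spec R$, a commutative finite flat $V$-group scheme admits a resolution
\begin{align*}
    \xymatrix{0 \ar[r] & \mathcal{N} \ar[r] & \mathcal{G} \ar[r]^-{\phi} & \mathcal{H} \ar[r] & 0}
\end{align*}
with $\mathcal{G},\mathcal{H}$ smooth, affine (hence quasi-projective) commutative $V$-group schemes, exact as fppf sheaves. One way to get it is Raynaud's (B\'egueri's) embedding via Weil restriction of $\mathbb{G}_m$ along the finite flat Cartier dual $\mathcal{N}^\vee\to V$. Another is Cartier duality, $\mathcal{N}=\mathcal{H}om(\mathcal{N}^\vee,\mathbb{G}_m)$, which gives $\mathcal{N}\hookrightarrow\mathrm{Res}_{\mathcal{N}^\vee/V}\mathbb{G}_m$ with smooth quotient. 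The base being a field or DVR (regular of dimension $\le1$) is what makes the quotient $\mathcal{H}$ representable and smooth.

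\textbf{Step 2 (long exact sequences).} The fppf sequence gives a long exact sequence in $H^\bullet_\mathrm{fppf}(V,-)$. On the \'etale side, $\mathcal{N}$ is the kernel of $\phi$ on the big \'etale site as well, so it suffices to check that $\phi$ is an \emph{\'etale} epimorphism. Granting that, we get a second long exact sequence, and \eqref{et-fppf-map} is a morphism between the two because the comparison functor is compatible with connecting maps. Remark \ref{et-fppf-smqp} makes the comparison maps at $\mathcal{G}$ and $\mathcal{H}$ isomorphisms in all degrees, and the five lemma then gives the isomorphism at $\mathcal{N}$.

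\textbf{Main obstacle.} The key point, and the one I expect to be hardest, is showing that $\phi:\mathcal{G}\to\mathcal{H}$ is surjective \'etale-locally.
\begin{itemize}
\item Since $\phi$ is an $\mathcal{N}$-torsor, a point of $\mathcal{H}$ lifts \'etale-locally exactly when the corresponding $\mathcal{N}$-torsor is \'etale-locally trivial.
\item When $\mathcal{N}$ is \'etale (the nice case), $\phi$ is itself \'etale and surjective. Sections then lift \'etale-locally by the structure of \'etale morphisms, via Hensel's lemma over the strictly henselian local rings. This gives the lemma.
\item For infinitesimal $\mathcal{N}$ (e.g.\ $\mu_p$ in characteristic $p$), this step is exactly where the argument must be examined. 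I would first check whether the \'etale sheaf $\mathcal{G}/\mathcal{N}$ agrees with $\mathcal{H}$ on strictly henselian $V$-algebras by computing stalks.
\end{itemize}
If it fails, the argument still shows that $H^r_\mathrm{fppf}(V,\mathcal{N})$ equals the \'etale hypercohomology of the complex $[\mathcal{G}\to\mathcal{H}]$. This is the form in which the identification is actually used for the nice group schemes of Section \ref{ClassicalBF-SoF}, where Step 2 goes through verbatim.
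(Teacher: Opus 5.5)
The step you flag as the main obstacle is not merely hard. It is false, and so is the lemma itself as soon as $\mathcal{N}$ is not \'etale.

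For case (1), take $\Bbbk=\mathbb{F}_p(t)$ and $\mathcal{N}=\mu_p$. Since $\mu_p(\Bbbk^\mathrm{sep})=\{1\}$, you get $H_\mathrm{\acute{e}t}^1(\Bbbk,\mu_p)=H^1(G_\Bbbk,\{1\})=0$. On the other hand, the Kummer sequence (exact for the fppf topology) and Hilbert 90 give $H_\mathrm{fppf}^1(\Bbbk,\mu_p)=\Bbbk^\times/\Bbbk^{\times p}\ni[t]\neq0$.

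For case (2), take $R=\mathbb{Z}_p$ with $p$ odd. Then $H_\mathrm{fppf}^1(\mathbb{Z}_p,\mu_p)=\mathbb{Z}_p^\times/\mathbb{Z}_p^{\times p}\cong\mathbb{Z}/p$. But $H_\mathrm{\acute{e}t}^1(\mathbb{Z}_p,\mu_p)=H^1(\widehat{\mathbb{Z}},\mu_p(\mathbb{Z}_p^\mathrm{nr}))=0$, because $\mathbb{Q}_p^\mathrm{nr}$ contains no nontrivial $p$-th root of unity.

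Write $\epsilon$ for the change-of-topology morphism from the big fppf site to the big \'etale site. Any resolution as in your Step 1 gives $R^1\epsilon_\ast\mathcal{N}\cong\mathrm{coker}_\mathrm{\acute{e}t}(\phi)$, using $R^1\epsilon_\ast\mathcal{G}=0$. This sheaf is intrinsic to $\mathcal{N}$, and for $\mu_p$ it equals the \'etale sheaf $\mathbb{G}_m/\mathbb{G}_m^{p}$, which is nonzero in both examples. So no choice of smooth resolution can make $\phi$ an \'etale epimorphism.

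The paper's own proof has exactly this hidden gap. It embeds $\mathcal{N}$ into abelian schemes via Raynaud, $0\to\mathcal{N}\to\mathcal{A}\to\mathcal{B}\to0$, and applies the five lemma to an \'etale long exact sequence. That sequence is only fppf-exact, so the \'etale long exact sequence it needs does not exist. Using a smooth affine resolution instead of abelian schemes makes no difference here. To your credit, you located the real issue, which the paper does not.

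What survives is the \'etale case, and there your resolution is unnecessary. A finite \'etale group scheme over an affine $V$ is smooth and quasi-projective, so Remark \ref{et-fppf-smqp} applies to $\mathcal{N}$ directly, for any such $V$ and not just fields and DVRs.

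Two statements in your last paragraph also need correcting. First, ``nice'' is not the same as \'etale. Niceness refers to the characteristic of $K$, so over $\mathcal{O}_y$ with $y$ above a prime dividing the order, a nice group scheme need not be \'etale. An example is $\mu_p$ over $\mathcal{O}_y$ with $y\mid p$ in the number field case. This is precisely where case (2) is invoked in \eqref{unram-coh} and breaks down.

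Second, the isomorphism $H_\mathrm{fppf}^r(V,\mathcal{N})\cong\mathbb{H}_\mathrm{\acute{e}t}^r(V,[\mathcal{G}\to\mathcal{H}])$ is correct, but it is not the comparison map of the lemma. The appendix's later arguments (Hochschild--Serre over $\mathcal{O}_y^\mathrm{nr}$, vanishing on strictly henselian rings) concern $H_\mathrm{\acute{e}t}^r(V,\mathcal{N})$ itself, so this isomorphism does not rescue them.

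The lemma should therefore carry the extra hypothesis that $\mathcal{N}$ is \'etale. That covers the uses over $K$ and $K_x$, and over $\mathcal{O}_y$ when $|\mathcal{M}|$ is invertible there. In that case your argument is correct but roundabout.
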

\begin{proof} In each case, by \cite[Remark A.8]{Mil06}, there is an exact sequence of commutative $V$-group schemes:
\begin{align}\label{NABexact}
\xymatrix{0 \ar[r] & \mathcal{N} \ar[r] & \mathcal{A} \ar[r] & \mathcal{B} \ar[r] & 0}
\end{align}
where $\mathcal{A}$ and $\mathcal{B}$ are projective abelian schemes over $V$. By \cite[Theorem III.3.9]{Mil80}, the comparison maps for $\mathcal{A}$ and $\mathcal{B}$ are isomorphisms. Therefore, the assertion follows from applying the five lemma to the long exact sequence associated to \eqref{NABexact}.
\end{proof}

\begin{rmk}\label{et-grp} Let $\Bbbk$ be a field. By \cite[\href{https://stacks.math.columbia.edu/tag/0BNE}{Tag 0BNE}]{Stacks}, we have an equivalence:
\begin{align*}
    \xymatrix{
    {\left\{\begin{array}{c} \textrm{commutative} \\ \textrm{finite \'etale} \\ \textrm{$\Bbbk$-group schemes} \end{array}\right\}} \ar[r]^-\sim & {\left\{\begin{array}{c} \textrm{finite discrete} \\ \textrm{$G_\Bbbk$-modules} \end{array}\right\}} & \mathcal{N} \ar@{|->}[r] & \mathcal{N}(\Bbbk^\mathrm{sep})\rlap{\ .}
    }
\end{align*}
which transforms the Cartier dual to the $\Bbbk^{\mathrm{sep},\times}$-dual as in Notation \ref{notation-overall}. Combined with Lemma \ref{localcomparison}, we get canonical isomorphism:
\begin{align*}
    \xymatrix{H^r(G_\Bbbk,\mathcal{N}(\Bbbk^\mathrm{sep})) \ar[r]^-\sim & H_\mathrm{\acute{e}t}^r(\Bbbk,\mathcal{N}) \ar[r]^-\sim & H_\mathrm{fppf}^r(\Bbbk,\mathcal{N})}
\end{align*}
Moreover, if $\Bbbk$ is of characteristic $0$, then every finite $\Bbbk$-group scheme is \'etale by \cite[\href{https://stacks.math.columbia.edu/tag/047N}{Tag 047N}]{Stacks} so this works for every finite $\Bbbk$-group scheme.
\end{rmk}

Let $Y\subseteq X$ be an open subset. Let $\mathcal{M}$ be a nice commutative group scheme over $Y$. By Remark \ref{et-grp}, we have canonical isomorphisms:
\begin{align*}
    \xymatrix{H^r(G_K,\mathcal{M}(K^\mathrm{sep})) \ar[r]^-\sim & H_\mathrm{\acute{e}t}^r(K,\mathcal{M}) \ar[r]^-\sim & H_\mathrm{fppf}^r(K,\mathcal{M})\rlap{\ .}}
\end{align*}
For each $x\in X\setminus U$ we also have canonical isomorphisms:
\begin{align*}
    \xymatrix{H^r\left(G_{K,x},\mathcal{M}(K_x^\mathrm{sep})\right) \ar[r]^-\sim & H_\mathrm{\acute{e}t}^r(K_x,\mathcal{M}) \ar[r]^-\sim & H_\mathrm{fppf}^r(K_x,\mathcal{M})\rlap{\ .}}
\end{align*}
By \cite[\href{https://stacks.math.columbia.edu/tag/01TG}{Tag 01TG}]{Stacks}, which applies because $\mathcal{M}_K\rightarrow\Spec K$ is finite, the restriction
\begin{align*}
    \xymatrix{\mathcal{M}(K^\mathrm{sep}) \ar[r] & \mathcal{M}(K_x^\mathrm{sep})}
\end{align*}
is bijective. By functoriality, this is a $G_{K,x}$-linear isomorphism. Combined with the above isomorphisms, we get a canonical isomorphism:
\begin{align*}
    \xymatrix{H^r(G_{K,x},\mathcal{M}(K^\mathrm{sep})) \ar[r]^-\sim & H_\mathrm{fppf}^r(K_x,\mathcal{M})\rlap{\ .}}
\end{align*}

\begin{rmk}\label{Hfppftop} By \cite[1.11]{Kes17}, Remark \ref{et-grp} and \cite[Theorem 7.1.8]{NSW08}, each $H_\mathrm{fppf}^1(K_x,\mathcal{M})$ is finite discrete. If $y\in Y^\mathrm{cl}$, then the pullback along $\mathcal{O}_y\hookrightarrow K_y$ defines an injection
\begin{align*}
    \xymatrix{H_\mathrm{fppf}^1(\mathcal{O}_y,\mathcal{M}) \ar@{^(->}[r] & H_\mathrm{fppf}^1(K_y,\mathcal{M})}
\end{align*}
by \cite[1.11]{Kes17}. See \cite[section 3]{Kes15} for details.
\end{rmk}

By \cite[Remark III.2.21]{Mil80}, we have a first quadrant spectral sequence for each $y\in Y^\mathrm{cl}$:
\begin{align*}
E_2^{p,q}=H^p(G_{K,y}^\mathrm{nr}, H^q(\mathcal{O}_y^\mathrm{nr}, \mathcal{M})) \Rightarrow H_\mathrm{\acute{e}t}^{p+q}(\mathcal{O}_y,\mathcal{M})\rlap{\ .}
\end{align*}
By \cite[\href{https://stacks.math.columbia.edu/tag/03QO}{Tag 03QO}]{Stacks}, $H^q(\mathcal{O}_y^\mathrm{nr},\mathcal{M})=0$ for $q>0$ because $\mathcal{O}_y^\mathrm{nr}$ is a strictly Henselian local ring. Then the above spectral sequence degenerates and hence the edge maps become isomorphisms:
\begin{align*}
    \xymatrix{H^r(G_{K,y}^\mathrm{nr},\mathcal{M}(\mathcal{O}_y^\mathrm{nr})) \ar[r]^-\sim & H_\mathrm{\acute{e}t}^r(\mathcal{O}_y,\mathcal{M})}
\end{align*}
By functoriality and Lemma \ref{localcomparison}, we have the following commutative diagram:
\begin{align}\label{unram-coh}
\begin{aligned}
    \xymatrix{
    H^r(G_{K,y}^\mathrm{nr},\mathcal{M}(\mathcal{O}_y^\mathrm{nr})) \ar[d] \ar[r]^-\sim & H_\mathrm{\acute{e}t}^r(\mathcal{O}_y,\mathcal{M}) \ar[d]^-\wr \ar[r]^-\sim & H_\mathrm{fppf}^r(\mathcal{O}_y,\mathcal{M}) \\
    H^r\left(G_{\kappa(y)},\mathcal{M}(\kappa(y)^\mathrm{sep})\right) \ar[r]^-\sim & H_\mathrm{\acute{e}t}^r(\kappa(y),\mathcal{M}) &
    }
\end{aligned}
\end{align}
where the middle column is an isomorphism by applying \cite[\href{https://stacks.math.columbia.edu/tag/09ZI}{Tag 09ZI}]{Stacks} to the complete discrete valuation ring $\mathcal{O}_y$. Hence the left column becomes an isomorphism. On the other hand, by the functoriality of $\mathcal{M}$, we have the following commutative diagram of $G_{K,y}$-modules:
\begin{align}\label{unram-pts}
\begin{aligned}
    \xymatrix{
    \mathcal{M}(\mathcal{O}_y^\mathrm{nr}) \ar[d]^-\wr \ar[r]^-\sim & \mathcal{M}(\mathcal{O}_y^\mathrm{sep})^{I_{K,y}} \ar[d]^-\wr \ar@{^(->}[r] & \mathcal{M}(\mathcal{O}_y^\mathrm{sep}) \ar[d]^-\wr & \\
    \mathcal{M}(K_y^\mathrm{nr}) \ar[r]^-\sim & \mathcal{M}(K_y^\mathrm{sep})^{I_{K,y}} \ar@{^(->}[r] & \mathcal{M}(K_y^\mathrm{sep}) & \mathcal{M}(K^\mathrm{sep}) \ar[l]_-\sim
    }
\end{aligned}
\end{align}
where the columns are isomorhisms by the valuative criterion \cite[\href{https://stacks.math.columbia.edu/tag/0BX5}{Tag 0BX5}]{Stacks}. Combining this with the restriction along the quotient $G_{K,y}\rightarrow G_{K,y}^\mathrm{nr}$, we get an injection of complexes:
\begin{align}\label{unram-subcomplex}
    \xymatrix{C^\bullet(G_{K,y}^\mathrm{nr},\mathcal{M}(\mathcal{O}_y^\mathrm{nr})) \ar@{^(->}[r] & C^\bullet(G_{K,y},\mathcal{M}(K^\mathrm{sep}))}
\end{align}
In particular, the map on $H^1$ induced from \eqref{unram-subcomplex} fits into the inflation-restriction exact sequence \cite[Corollary 2.4.2]{NSW08} and gives an isomorphism:
\begin{align*}
    H^1(G_{K,y}^\mathrm{nr},\mathcal{M}(\mathcal{O}_y^\mathrm{nr}))\cong\ker\left(\xymatrixcolsep{1.25pc}\xymatrix{H^1(G_{K,y},\mathcal{M}(K^\mathrm{sep})) \ar[r] & H^1(I_{K,y},\mathcal{M}(K^\mathrm{sep}))}\right)
\end{align*}
where the right hand side is usually called the subgroup of \emph{unramified classes}. On the other hand, applying \eqref{unram-pts} to $\mathcal{M}^\vee$, we get canonical identifications:
\begin{align*}
    (\mathcal{M}(K_y^\mathrm{sep})^\vee)^{I_{K,y}}\cong\mathcal{M}^\vee(K_y^\mathrm{sep})^{I_{K_y}}\cong\mathcal{M}^\vee(\mathcal{O}_y^\mathrm{nr})
\end{align*}
where the first isomorphism comes from Remark \ref{et-grp}. Hence the middle term of \ref{Fxnr-to-Fx} is canonically identified with the fppf cohomology.

\end{document}